
%
%
\documentclass[leqno]{siamltex704}
\usepackage{color,xcolor}
\usepackage{amsmath}
\usepackage{graphicx}
\usepackage{mathrsfs}
\usepackage{float}
\usepackage{amsfonts,amssymb}
\usepackage{dsfont}
\usepackage{pifont}
\usepackage{hyperref}
\usepackage{multirow}
\numberwithin{equation}{section}
\def\3bar{{|\hspace{-.02in}|\hspace{-.02in}|}}

\def\T{{\mathcal{T}}}

\def\v{\varphi}

\def\bv{{\mathbf{v}}}
\def\bn{{\mathbf{n}}}

\newtheorem{example}{\bf Example}[section]
\newtheorem{remark}{Remark}[section]
\newtheorem{algorithm}{Weak Galerkin Algorithm}



\setlength{\parindent}{0.25in} \setlength{\parskip}{0.08in}

\title{A Weak Galerkin Finite Element Scheme for the Biharmonic Equations
by Using Polynomials of Reduced Order}
\author{ Ran Zhang
\thanks{Department of Mathematics, Jilin University, Changchun,
China (zhangran@mail.jlu.edu.cn). The research of Zhang was
supported in part by China Natural National Science
Foundation(11271157, 11371171, 11471141), and by the Program for New
Century Excellent Talents in University of Ministry of Education of
China.} \and Qilong Zhai\thanks{Department of Mathematics, Jilin
University, Changchun, China (diql@mails.jlu.edu.cn).}  }

\begin{document}

\maketitle

\begin{abstract}
A new weak Galerkin (WG) finite element method for solving the
biharmonic equation in two or three dimensional spaces by using
polynomials of reduced order is introduced and analyzed. The WG
method is on the use of weak functions and their weak derivatives
defined as distributions. Weak functions and weak derivatives can be
approximated by polynomials with various degrees. Different
combination of polynomial spaces leads to different WG finite
element methods, which makes WG methods highly flexible and
efficient in practical computation. This paper explores the
possibility of optimal combination of polynomial spaces that
minimize the number of unknowns in the numerical scheme, yet without
compromising the accuracy of the numerical approximation. Error
estimates of optimal order are established for the corresponding WG
approximations in both a discrete $H^2$ norm and the standard $L^2$
norm. In addition, the paper also presents some numerical
experiments to demonstrate the power of the WG method. The numerical
results show a great promise of the robustness, reliability,
flexibility and accuracy of the WG method.
\end{abstract}

\begin{keywords} weak Galerkin finite element methods,
 weak Laplacian,  biharmonic equation, polyhedral meshes.
\end{keywords}

\begin{AMS}
Primary, 65N30, 65N15, 65N12, 74N20; Secondary, 35B45, 35J50, 35J35
\end{AMS}

\section{Introduction}
This paper will concern with approximating the solution $u$ of the
biharmonic equation
\begin{eqnarray}\label{biharmonic}
\Delta^2 u&=&f, \quad {\rm in} \ \Omega,
\end{eqnarray}
with clamped boundary conditions
\begin{eqnarray}
\label{boundary_value}u&=&g,\quad {\rm on}\ \partial\Omega,
\\
\label{boundary_normal}\frac{\partial u}{\partial
\textbf{n}}&=&\phi,\quad {\rm on}\ \partial\Omega,
\end{eqnarray}
where $\Delta$ is the Laplacian operator, $\Omega$ is a bounded
polygonal or polyhedral domain in $\mathbb{R}^d$ for $d=2, 3$ and
$\textbf{n}$ denotes the outward unit normal vector along
$\partial\Omega$. We assume that $f, g, \phi$ are given,
sufficiently smooth functions.

This problem mainly arises in fluid dynamics where the stream
functions $u$ of incompressible flows are sought and elasticity
theory,  in which the deflection of a thin plate of the clamped
plate bending problem is sought \cite{JS77, Muskhelishvili53,
Roache72}.

Due to the significance of the biharmonic problem, a large number of
methods for discretizing (\ref{biharmonic}) -
(\ref{boundary_normal}) have been proposed. These methods include
dealing with the  biharmonic operator directly, such as discretizing
(\ref{biharmonic})-(\ref{boundary_normal}) on a uniform grid using a
13-point or 25-point direct approximation of the fourth order
differential operator \cite{Bjorstad83, GM79}; mixed methods, that
is, splitting the biharmonic equation into two coupled Poisson
equations \cite{AYB98, BG11, CDG09, AB85, CLL08, CR74, DP01, DGP91,
EG75, Linden85,Heinrichs91,BK00,BK10}. Also there are some other
approaches to the biharmonic problems, like the conformal mapping
methods \cite{CDH97, Pandit08}, integral equations \cite{Mayo84},
orthogonal spline collocation method \cite{Bialecki03} and the fast
multipole methods \cite{GGM92}, etc.

Among these methods, finite element methods are one of the most
widely used technique, which is based on variational formulations of
the equations considered. In fact, the biharmonic equation is also
one of the most important applicable problems of the finite element
methods, cf. \cite{CT65, Fraeijs65, AFS68, Zlamal68,CCQ131,CCQ132}.
The Galerkin methods, discretizing the corresponding variational
form of (\ref{biharmonic}) is given by seeking $u\in H^2(\Omega)$
satisfying
$$
u|_{\partial \Omega}=g, \qquad \frac{\partial u}{\partial
\textbf{n}}|_{\partial \Omega}=\phi
$$
such that
\begin{eqnarray}
\label{variational_form} (\Delta u, \Delta v)=(f,v), \quad \forall
v\in H_0^2(\Omega),
\end{eqnarray}
where $H_0^2(\Omega)$ is the subspace of $H^2(\Omega)$ consisting of
functions with vanishing value and normal derivative on $\partial
\Omega$.

Standard finite element methods for solving (\ref{biharmonic}) -
(\ref{boundary_normal}) based on the variational form
(\ref{variational_form}) with conforming finite element require
rather sophisticated finite elements such as the
21-degrees-of-freedom of Argyris (see \cite{AD76}) or nonconforming
elements of Hermite type. Since the complexity in the construction
for the finite element with high continuous elements, $H^2$
conforming element are seldom used in practice for the biharmonic
problem.  To avoid using of $C^1$-elements, besides the mixed
methods, an alternative approach, nonconforming and discontinuous
Galerkin finite element methods have been developed for solving the
biharmonic equation over the last several decades. Morley element
\cite{Morley68} is a well known nonconforming element for the
biharmonic equation for its simplicity. A $C^0$ interior penalty
method was developed in \cite{BS05,EGHLMT02}. In \cite{MB07}, a
hp-version interior penalty discontinuous Galerkin method was
presented for the biharmonic equation.

Recently a new class of finite element methods, called weak
Galerkin(WG) finite element methods were developed for the
biharmonic equation for its highly flexible and robust properties.
The WG method refers to a numerical scheme for partial differential
equations in which differential operators are approximated by weak
forms as distributions over a set of generalized functions. This
thought was first proposed in \cite{WY1} for a model second order
elliptic problem, and this method was further developed in
\cite{WY2, MWY,WY3}. In \cite{MWY3}, a weak Galerkin method for the
biharmonic equation was derived by using discontinuous functions of
piecewise polynomials on general partitions of polygons or polyhedra
of arbitrary shape. After that, in order to reduce the number of
unknowns, a $C^0$ WG method \cite{MWYZ} was proposed and analyzed.
However, due to the continuity limitation, the $C^0$ WG scheme only
works for the traditional finite partitions, while not arbitrary
polygonal or polyhedral girds as allowed in \cite{MWY3}.

In order to realize the aim that reducing the unknown numbers and
suit for general partitions of polygons or polyhedra of arbitrary
shape at the same time, in this paper we construct a reduction WG
scheme based on the use of a discrete weak Laplacian plus a new
stabilization that is also parameter free. The goal of this paper is
to specify all the details for the reduction WG method for the
biharmonic equations and present the numerical analysis by
presenting a mathematical convergence theory.

An outline of the paper is as follows. In the remainder of the
introduction we shall introduce some preliminaries and notations for
Sobolev spaces. In Section 2 is devoted to the definitions of weak
functions and weak derivatives. The WG finite element schemes for
the biharmonic equation (\ref{biharmonic})-(\ref{boundary_normal})
are presented in Section 3. In Section 4, we establish an optimal
order error estimates for the WG finite element approximation in an
$H^2$ equivalent discrete norm. In Section 5, we shall drive an
error estimate for the WG finite element method in the standard
$L^2$ norm. Section 6 contains the numerical results of the WG
method. The theoretical results are illustrated by these numerical
examples. Finally, we present some technical estimates for
quantities related to the local $L^2$ projections into various
finite element spaces and some approximation properties which are
useful in the convergence analysis in Appendix A.
\medskip

Now let us define some notations. Let $D$ be any open bounded domain
with Lipschitz continuous boundary in $\mathbb{R}^d, d=2, 3$. We use
the standard definition for the Sobloev space $H^s(D)$ and their
associated inner products $(\cdot, \cdot)_{s, D}$, norms
$\|\cdot\|_{s, D}$, and seminorms $|\cdot|_{s, D}$ for any $s\ge 0$.

The space $H^0(D)$ coincides with $L^2(D)$, for which the norm and
the inner product are denoted by $\|\cdot\|_D$ and
$(\cdot,\cdot)_D$, respectively. When $D=\Omega$, we shall drop the
subscript $D$ in the norm and in the inner product notation.

The space $H({\rm div}; D)$  is defined as the set of vector-valued
functions on $D$ which, together with their divergence, are square
integrable; i.e.,
$$
H({\rm div}; D)=\{\textbf{v}: \textbf{v}\in [L^2(D)]^d, \nabla \cdot
\textbf{v}\in L^2(D)\}.
$$
The norm in $H({\rm div}; D)$ is defined by
$$
\|\textbf{v}\|_{H({\rm div}; D)}=(\|\textbf{v}\|^2_D+\|\nabla\cdot
\textbf{v}\|^2_D)^{\frac12}.
$$

\section{Weak Laplacain and Discrete Weak
Laplacian}\label{Section:weakLaplacian}

For the biharmonic equation (\ref{biharmonic}), the underlying
differential operator is the Laplacian $\Delta$. Thus, we shall
first introduce a weak version for the Laplacian operator defined on
a class of discontinuous functions as distributions \cite{MWY3}.

Let $K$ be any polygonal or polyhedral domain with boundary
$\partial K$. A weak function on the region $K$ refers to a function
$v= \{v_0, v_b, \textbf{v}_g\}$ such that $v_0\in L^2(K)$, $v_b\in
L^{2}(\partial K)$, and $\textbf{v}_g\cdot \textbf{n} \in
L^{2}(\partial K)$, where $\textbf{n}$ is the outward unit normal
vector along $\partial K$. Denote by $\mathcal{W}(K)$ the space of
all weak functions on $K$, that is,
\begin{eqnarray}
\label{space_wfunctions} \mathcal{W}(K)=\{v=\{v_0,v_b,
\textbf{v}_g\}: v_0\in L^2(K), v_b, \textbf{v}_g\cdot \textbf{n} \in
L^{2}(\partial K)\}.
\end{eqnarray}

Recall that, for any $v\in \mathcal{W}(K)$, the {\rm weak Laplacian}
of $v=\{v_0, v_b, \textbf{v}_g\}$ is defined as a linear functional
$\Delta_w v$ in the dual space of $H^2(K)$ whose action on each
$\varphi\in H^2(K)$ is given by
\begin{eqnarray}
\label{wLaplacian} (\Delta_w v, \varphi)_K=(v_0, \Delta
\varphi)_K-\langle v_b, \nabla \varphi\cdot
\textbf{n}\rangle_{\partial K}+\langle\textbf{v}_g\cdot
\textbf{n},\varphi\rangle_{\partial K},
\end{eqnarray}
where $(\cdot, \cdot)_K$ stands for the $L^2$-inner product in
$L^2(K)$ and $\langle\cdot, \cdot\rangle_{\partial K}$ is the inner
product in $L^2(\partial K)$.

The Sobolev space $H^2(K)$ can be embedded into the space
$\mathcal{W}(K)$ by an inclusion map $i_\mathcal{W}:
H^2(K)\rightarrow \mathcal{W}(K)$ defined as follows
$$
i_\mathcal{W}(\phi)=\{\phi|_K, \phi|_{\partial K}, (\nabla \phi\cdot
\textbf{n})\textbf{n}|_{\partial K}\}, \quad \phi\in H^2(K).
$$
With the help of the inclusion map $i_\mathcal{W}$, the Sobolev
space $H^2(K)$ can be viewed as a subspace of $\mathcal{W}(K)$ by
identifying each $\phi\in H^2(K)$ with $i_\mathcal{W}(\phi)$.

Analogously, a weak function $v= \{v_0,v_b, \textbf{v}_g\}\in
\mathcal{W}(K)$ is said to be in $H^2(K)$ if it can be identified
with a function $\phi\in H^2(K)$ through the above inclusion map.
Here the first components $v_0$ can be seen as the value of $v$ in
the interior and the second component $v_b$ represents the value of
$v$ on $\partial K$. Denote $\nabla v\cdot \textbf{n}$ by $v_n$,
then the third component $\textbf{v}_g$ represents $(\nabla v\cdot
\textbf{n})\textbf{n}|_{\partial K}=v_n \textbf{n}$. Obviously,
$\textbf{v}_g\cdot\textbf{n}= \nabla v\cdot\textbf{n}$. Note that if
$v\not\in H^2(K)$, then $v_b$ and $\textbf{v}_g$ may not necessarily
be related to the trace of $v_0$ and $(\nabla v_0\cdot
\textbf{n})\textbf{n}$ on $\partial K$, respectively.

For $v\in H^2(K)$, from integration by parts we have
\begin{eqnarray*}
(\Delta_w v, \varphi)_K&=&(v, \Delta \varphi)_K-\langle v, \nabla
\varphi\cdot \textbf{n}\rangle_{\partial K}+\langle\nabla v\cdot
\textbf{n},\varphi\rangle_{\partial K}
\\
&=& (v_0, \Delta \varphi)_K-\langle v_b, \nabla \varphi\cdot
\textbf{n}\rangle_{\partial K}+\langle\textbf{v}_g\cdot
\textbf{n},\varphi\rangle_{\partial K}.
\end{eqnarray*}
Thus the weak Laplacian is identical with the strong Laplacian,
i.e.,
$$
\Delta_w i_\mathcal{W}(v)=\Delta v
$$
for smooth functions in $H^2(K)$.

For numerical implementation purpose, we define a discrete version
of the weak Laplacain operator by approximating $\Delta_w$ in
polynomial subspaces of the dual of $H^2(K)$. To this end, for any
non-negative integer $r\ge 0$, let $P_r(K)$ be the set of
polynomials on $K$ with degree no more than $r$.

\begin{definition}{\rm (\cite{MWY3})}
\label{DweakLaplacian} A discrete weak Laplacian operator, denoted
by $\Delta_{w,r,K}$, is defined as the unique polynomial $\Delta_{w,
r, K} v\in P_r(K)$ satisfying
\begin{eqnarray}
\label{Discrete_wLaplacian} \qquad(\Delta_{w,r,K} v,
\varphi)_K=(v_0, \Delta \varphi)_K-\langle v_b, \nabla \varphi\cdot
\textbf{n}\rangle_{\partial K}+\langle\textbf{\rm \bf v}_n\cdot
\textbf{n},\varphi\rangle_{\partial K}, \quad \forall \varphi\in
P_r(K).
\end{eqnarray}
\end{definition}

From the integration by parts, we have
$$
(v_0, \Delta \varphi)_K = (\Delta v_0, \varphi)_K+\langle v_0,
\nabla \varphi\cdot \textbf{n}\rangle_{\partial K}-\langle\nabla
v_0\cdot \textbf{n},\varphi\rangle_{\partial K}.
$$
Substituting the above identity into (\ref{Discrete_wLaplacian})
yields
\begin{eqnarray}
\label{Discrete_wLaplacian-useful} \qquad(\Delta_{w,r,K} v,
\varphi)_K-(\Delta v_0, \varphi)_K=\langle v_0-v_b, \nabla
\varphi\cdot \textbf{n}\rangle_{\partial K}-\langle(\nabla
v_0-\textbf{v}_g)\cdot \textbf{n},\varphi\rangle_{\partial K},
\end{eqnarray}
for all $\varphi\in P_r(K)$.

\section{Weak Galerkin Finite Element Scheme}\label{Section:WG-Scheme}

Let $\mathcal{T}_h$ be a partition of the domain $\Omega$ into
polygons in 2D or polyhedra in 3D. Assume that $\mathcal{T}_h$ is
shape regular in the sense as defined in \cite{WY2}. Denote by
$\mathcal{E}_h$ the set of all edges or flat faces in
$\mathcal{T}_h$, and let
$\mathcal{E}_h^0=\mathcal{E}_h\setminus\partial \Omega$ be the set
of all interior edges or flat faces.

Since $v_n$ represents $\nabla v\cdot \textbf{n}$, then $v_n$ is
naturally dependent on $\textbf{n}$. To ensure a single valued
function $v_n$ on $e\in \mathcal{E}_h$, we introduce a set of normal
directions on $\mathcal{E}_h$ as follows
\begin{eqnarray}\label{normal_directions}
\mathcal{N}_h=\{\textbf{n}_e: \textbf{n}_e {\rm \ is\  unit \ and \
normal\  to\ } e, \ e\in \mathcal{E}_h\}.
\end{eqnarray}
For any given integer $k\ge 2, T\in \mathcal{T}_h$, denote by
$\mathcal{W}_k(T)$ the discrete weak function space given by
\begin{eqnarray}\label{Discrete_wfunctionSpace}
\mathcal{W}_k(T)=\{\{v_0,v_b, v_n \textbf{n}_e\}: v_0\in P_k(T),
v_b, v_n\in P_{k-1}(e),  e\subset \partial T\}.
\end{eqnarray}
By patching $\mathcal{W}_k(T)$ over all the elements $T\in
\mathcal{T}_h$ through a common value on the interface
$\mathcal{E}_h^0$, we arrive at a weak finite element space $V_h$
given by
$$
V_h=\{\{v_0, v_b, v_n \textbf{n}_e \}: \{v_0,v_b, v_n
\textbf{n}_e\}\big|_T\in \mathcal{W}_k(T), \quad \forall T\in
\mathcal{T}_h\}.
$$
Denote by $V_h^0$ the subspace of $V_h$ constituting discrete weak
functions with vanishing traces; i.e.,
$$
V_h^0=\{\{v_0, v_b, v_n \textbf{n}_e \}: \{v_0,v_b, v_n
\textbf{n}_e\}\in V_h, v_b|_e=0, v_n|_e=0, \quad e\in \partial T\cap
\partial \Omega\}.
$$
Denote by $\Lambda_h$ the trace of $V_h$ on $\partial \Omega$ from
the component $v_b$. It is obvious that $\Lambda_h$ consists of
piecewise polynomials of degree $k-1$. Similarly, denote by
$\Upsilon_h$ the trace of $V_h$ from the component of $v_n$ as
piecewise polynomials of degree $k-1$. Denote by $\Delta_{w, k-2}$
the discrete weak Laplacian operator on the finite element space
$V_h$ computed by using (\ref{Discrete_wLaplacian}) on each element
$T$ for $k\ge 2$, that is,
\begin{eqnarray}\label{Discrete_wLaplacianoperator}
(\Delta_{w, k-2} v)|_T=\Delta_{w, k-2, T}(v|_T)\quad \forall v\in
V_h.
\end{eqnarray}
For simplicity, we shall drop the subscript $k-2$ in the notation
$\Delta_{w, k-2}$ for the discrete weak Laplacian operator. We also
introduce the following notation
$$
(\Delta_{w}v,\Delta_{w}w)_h=\sum_{T\in \mathcal{T}_h}
(\Delta_{w}v,\Delta_{w}w)_T.
$$

For each element $T\in \mathcal{T}_h$, denote by $Q_0$ the $L^2$
projection onto $P_k(T)$, $k\ge 2$. For each edge/face $e\subset
\partial T$, denote by $Q_b$ the $L^2$ projection onto
$P_{k-1}(e)$. Now for any $u\in H^2(\Omega)$, we shall combine these
two projections together to define a projection into the finite
element space $V_h$ such that on the element $T$
$$
Q_hu=\{Q_0 u, Q_b u, (Q_b (\nabla u\cdot
\textbf{n}_e))\textbf{n}_e\}.
$$

\begin{theorem}\label{Thm:commutative-property}
Let $\mathbb{Q}_h$ be the local $L^2$ projection onto $P_{k-2}$.
Then the following commutative diagram holds true on each element
$T\in\T_h$:
\begin{eqnarray}\label{Commutative}
\Delta_w Q_h u=\mathbb{Q}_h \Delta u,\qquad \forall u\in H^2(T).
\end{eqnarray}
\end{theorem}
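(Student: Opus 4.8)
The plan is to verify the identity (\ref{Commutative}) directly from the definition of the discrete weak Laplacian in (\ref{Discrete_wLaplacian}), applied to the particular weak function $Q_h u = \{Q_0 u, Q_b u, (Q_b(\nabla u\cdot\textbf{n}_e))\textbf{n}_e\}$. Since both $\Delta_w Q_h u$ and $\mathbb{Q}_h\Delta u$ lie in $P_{k-2}(T)$, it suffices to show that they have the same $L^2(T)$-inner product against every test polynomial $\varphi\in P_{k-2}(T)$. Starting from (\ref{Discrete_wLaplacian}) with $r=k-2$, $v=Q_h u$, I would write
\begin{eqnarray*}
(\Delta_w Q_h u,\varphi)_T &=& (Q_0 u,\Delta\varphi)_T-\langle Q_b u,\nabla\varphi\cdot\textbf{n}\rangle_{\partial T}+\langle Q_b(\nabla u\cdot\textbf{n}_e)\,\textbf{n}_e\cdot\textbf{n},\varphi\rangle_{\partial T}.
\end{eqnarray*}

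Next I would remove the projections inside each term using the defining property of $Q_0$ and $Q_b$. Because $\Delta\varphi\in P_{k-2}(T)\subset P_k(T)$, the $L^2$ projection $Q_0$ can be dropped: $(Q_0 u,\Delta\varphi)_T=(u,\Delta\varphi)_T$. Similarly, on each edge/face $e\subset\partial T$, the quantities $\nabla\varphi\cdot\textbf{n}$ and $\varphi$ restrict to polynomials of degree at most $k-2\le k-1$, so $\langle Q_b u,\nabla\varphi\cdot\textbf{n}\rangle_{\partial T}=\langle u,\nabla\varphi\cdot\textbf{n}\rangle_{\partial T}$ and (using $\textbf{n}=\pm\textbf{n}_e$ on $e$ together with the self-adjointness of $Q_b$) $\langle Q_b(\nabla u\cdot\textbf{n}_e)\textbf{n}_e\cdot\textbf{n},\varphi\rangle_{\partial T}=\langle (\nabla u\cdot\textbf{n}_e)\textbf{n}_e\cdot\textbf{n},\varphi\rangle_{\partial T}=\langle\nabla u\cdot\textbf{n},\varphi\rangle_{\partial T}$. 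Collecting these, $(\Delta_w Q_h u,\varphi)_T = (u,\Delta\varphi)_T-\langle u,\nabla\varphi\cdot\textbf{n}\rangle_{\partial T}+\langle\nabla u\cdot\textbf{n},\varphi\rangle_{\partial T}$.

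Finally I would apply integration by parts (the second Green's identity) to the right-hand side, which is legitimate since $u\in H^2(T)$ and $\varphi$ is smooth: $(u,\Delta\varphi)_T-\langle u,\nabla\varphi\cdot\textbf{n}\rangle_{\partial T}+\langle\nabla u\cdot\textbf{n},\varphi\rangle_{\partial T}=(\Delta u,\varphi)_T$. Since $\varphi\in P_{k-2}(T)$, $(\Delta u,\varphi)_T=(\mathbb{Q}_h\Delta u,\varphi)_T$ by definition of $\mathbb{Q}_h$. Thus $(\Delta_w Q_h u-\mathbb{Q}_h\Delta u,\varphi)_T=0$ for all $\varphi\in P_{k-2}(T)$, and as both sides are in $P_{k-2}(T)$ this forces equality, proving (\ref{Commutative}).

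There is no serious obstacle here; this is essentially a bookkeeping argument. The one point requiring a little care is the boundary term involving $\textbf{v}_g$: one must track the sign relating the globally fixed normal $\textbf{n}_e\in\mathcal{N}_h$ to the outward normal $\textbf{n}$ of $T$ on each $e$, and note that $\textbf{v}_g\cdot\textbf{n}=(\nabla u\cdot\textbf{n}_e)(\textbf{n}_e\cdot\textbf{n})=\nabla u\cdot\textbf{n}$ regardless of orientation, so the $L^2$ projection $Q_b$ can indeed be discarded when paired with $\varphi|_e\in P_{k-2}(e)$. Everything else follows from the defining orthogonality of the $L^2$ projections and Green's formula.
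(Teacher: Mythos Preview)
Your proof is correct and follows essentially the same route as the paper's own argument: expand $(\Delta_w Q_h u,\varphi)_T$ from the definition of the discrete weak Laplacian, drop the projections $Q_0$ and $Q_b$ using the polynomial degrees of $\Delta\varphi$, $\nabla\varphi\cdot\textbf{n}$, and $\varphi$, then apply Green's identity and the definition of $\mathbb{Q}_h$. Your version is slightly more explicit about the orientation issue $\textbf{n}=\pm\textbf{n}_e$, which the paper handles silently, but the logic is identical.
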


\begin{proof}
For any $\phi\in P_{k-2}(T)$, from the definition of the discrete
weak Laplacian and the $L^2$ projection
\begin{eqnarray*}
(\Delta_w Q_h u, \phi)_T &=&(Q_0 u,\Delta \phi)_T-\langle Q_b u,
\nabla \phi\cdot \textbf{n}\rangle_{\partial T}+\langle Q_b(\nabla
u\cdot \textbf{n}_e)\textbf{n}_e\cdot \textbf{n},
\phi\rangle_{\partial T}
\\
&=& (u, \Delta \phi)_T-\langle u, \nabla \phi\cdot
\textbf{n}\rangle_{\partial T}+\langle \nabla u\cdot \textbf{n},
\phi\rangle_{\partial T}
\\
&=&(\Delta u, \phi)_T=(\mathbb{Q}_h \Delta u, \phi),
\end{eqnarray*}
which implies (\ref{Commutative}).
\end{proof}

The commutative property (\ref{Commutative}) indicates that the
discrete weak Laplacian of the $L^2$ projection of $u$ is a good
approximation of the Laplacian of $u$ in the classical sense. This
is a good property of the discrete weak Laplacian in application to
algorithm and analysis.

For any $u_h=\{u_0, u_b, u_n \textbf{n}_e\}$ and $v=\{v_0, v_b, v_n
\textbf{n}_e \}$ in $V_h$, we introduce a bilinear form as follows
\begin{eqnarray*}
s(u_h, v)&=& \displaystyle\sum_{T\in \mathcal{T}_h}h_T^{-1} \langle
\nabla u_0\cdot \textbf{n}_e-u_n, \nabla v_0\cdot \textbf{n}_e-v_n
\rangle_{\partial T}
\\
&&+\sum_{T\in \mathcal{T}_h}h_T^{-3}\langle Q_b u_0-u_b, Q_b v_0-v_b
\rangle_{\partial T}.
\end{eqnarray*}

\begin{algorithm}
Find $u_h = \{u_0, u_b, u_n\textbf{n}_e\}\in V_h$ satisfying $u_b =
Q_b g$ and $u_n = Q_{b}\phi$ on $\partial \Omega$ and the following
equation:
\begin{eqnarray}\label{WGalerkin_Algorithm}
(\Delta_w u_h, \Delta_w v)_h+s(u_h, v)=(f, v_0), \ \forall v=\{v_0,
v_b, v_n\textbf{n}_e\}\in V_h^0.
\end{eqnarray}
\end{algorithm}

\begin{lemma}
\label{Lemma:Lemma4.1} For any $v \in V_h^0$, let $ |\!|\!| v
|\!|\!| $ be given by
\begin{eqnarray}\label{trinorm}
|\!|\!| v |\!|\!| ^2=(\Delta_w v, \Delta_w v)_h+s(v, v).
\end{eqnarray}
Then, $ |\!|\!| \cdot |\!|\!| $ defines a norm in the linear space
$V_h^0$.
\end{lemma}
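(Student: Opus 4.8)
The plan is to verify the three norm axioms for $\3bar\cdot\3bar$, where the only nontrivial one is positive-definiteness: if $\3bar v\3bar = 0$ for some $v=\{v_0,v_b,v_n\textbf{n}_e\}\in V_h^0$, then $v$ must be the zero weak function (i.e.\ $v_0=0$, $v_b=0$, $v_n=0$). Non-negativity is immediate since $(\Delta_w v,\Delta_w v)_h=\sum_T\|\Delta_w v\|_T^2\ge 0$ and $s(v,v)=\sum_T h_T^{-1}\|\nabla v_0\cdot\textbf{n}_e-v_n\|_{\partial T}^2+\sum_T h_T^{-3}\|Q_b v_0-v_b\|_{\partial T}^2\ge 0$. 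Homogeneity $\3bar\lambda v\3bar=|\lambda|\,\3bar v\3bar$ and the triangle inequality follow from the fact that $\3bar\cdot\3bar$ is the norm induced by the semi-positive bilinear form $(\Delta_w\cdot,\Delta_w\cdot)_h+s(\cdot,\cdot)$, so these need only a sentence.

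For the definiteness argument I would suppose $\3bar v\3bar=0$. Then each summand vanishes: $\Delta_w v=0$ on every $T$, and on every $\partial T$ we have $\nabla v_0\cdot\textbf{n}_e=v_n$ and $Q_b v_0=v_b$. The strategy is to feed these identities into the key relation \eqref{Discrete_wLaplacian-useful}. With $r=k-2$, that identity reads
\begin{eqnarray*}
(\Delta_{w}v,\varphi)_T-(\Delta v_0,\varphi)_T=\langle v_0-v_b,\nabla\varphi\cdot\textbf{n}\rangle_{\partial T}-\langle(\nabla v_0-\textbf{v}_g)\cdot\textbf{n},\varphi\rangle_{\partial T},
\end{eqnarray*}
for all $\varphi\in P_{k-2}(T)$. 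Since $\Delta_w v=0$ and $(\nabla v_0-\textbf{v}_g)\cdot\textbf{n}=\nabla v_0\cdot\textbf{n}_e-v_n=0$ on $\partial T$, this collapses to $(\Delta v_0,\varphi)_T=-\langle v_0-v_b,\nabla\varphi\cdot\textbf{n}\rangle_{\partial T}$ for all $\varphi\in P_{k-2}(T)$. Because $v_0\in P_k(T)$ we have $\Delta v_0\in P_{k-2}(T)$, so choosing $\varphi=\Delta v_0$ and integrating by parts once more (writing $\langle v_0-v_b,\nabla\varphi\cdot\textbf{n}\rangle_{\partial T}$ in terms of $v_0-v_b$ against $\Delta v_0$) should let me conclude $\Delta v_0=0$ on $T$ together with $v_0=v_b$ on $\partial T$ after using $v_b=Q_b v_0$. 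From $\Delta v_0=0$ on $T$ and $v_0=v_b=Q_b v_0$ on $\partial T$ one deduces, element by element, that $v_0$ is harmonic with boundary data that is itself a polynomial trace; combined with the interface continuity of $v_b$ across $\mathcal{E}_h^0$ and the homogeneous boundary condition $v_b=0$ on $\partial\Omega$, a standard propagation argument gives $v_0\equiv 0$ on $\Omega$, hence $v_b=Q_b v_0=0$ and $v_n=\nabla v_0\cdot\textbf{n}_e=0$ on every edge, i.e.\ $v=0$.

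I expect the main obstacle to be the step that extracts $\Delta v_0=0$ and $v_0|_{\partial T}=v_b$ cleanly from the single scalar identity $(\Delta v_0,\varphi)_T=-\langle v_0-v_b,\nabla\varphi\cdot\textbf{n}\rangle_{\partial T}$ for $\varphi\in P_{k-2}(T)$: one has to juggle the integration-by-parts boundary terms carefully and use that $\nabla v_0\cdot\textbf{n}_e=v_n$ is already known, so that the boundary contribution can be rewritten and then forced to vanish. A convenient alternative, which I would pursue if the direct computation gets messy, is to test the original weak-Laplacian definition \eqref{Discrete_wLaplacian} with $\varphi=v_0$ (legitimate when $k-2\ge k$, which fails) — so instead I would go back to the two-sided integration-by-parts form and argue that the quadratic expression $\sum_T\big((\Delta v_0,v_0)_T+\text{boundary terms}\big)$ reduces, after using all the vanishing conditions, to $-\sum_T\|\nabla v_0\|_T^2$ or similar, giving $\nabla v_0=0$ on each $T$ and then $v_0$ piecewise constant; continuity of $v_b=Q_b v_0$ across faces plus the zero boundary trace then forces $v_0\equiv 0$. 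Either route, the crux is combining the stabilization-induced jump conditions with the discrete-weak-Laplacian identity to kill $v_0$ entirely.
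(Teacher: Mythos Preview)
Your primary route overreaches at one point. From $(\Delta v_0,\varphi)_T=-\langle v_0-v_b,\nabla\varphi\cdot\textbf{n}\rangle_{\partial T}$ you do get $\Delta v_0=0$, and in fact more directly than you suggest: since $\nabla\varphi\cdot\textbf{n}\in P_{k-3}(e)\subset P_{k-1}(e)$ for $\varphi\in P_{k-2}(T)$, and $Q_b$ is the $L^2$ projection onto $P_{k-1}(e)$, the boundary term equals $\langle Q_bv_0-v_b,\nabla\varphi\cdot\textbf{n}\rangle_{\partial T}=0$ immediately, so $(\Delta v_0,\varphi)_T=0$ for all such $\varphi$ and hence $\Delta v_0=0$. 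What you \emph{cannot} extract from this identity is $v_0=v_b$ on $\partial T$: the trace of $v_0\in P_k(T)$ on an edge lies in $P_k(e)$ while $v_b\in P_{k-1}(e)$, and all you know is the strictly weaker projection condition $Q_bv_0=v_b$. Without the full trace equality your ``harmonic with matched boundary data plus propagation'' argument has no footing --- on a boundary element, for instance, $Q_bv_0=0$ and $\Delta v_0=0$ do not by themselves force $v_0=0$.

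Your fallback is the correct route and is exactly the paper's argument. Once $\Delta v_0=0$ on each $T$, integrate by parts to obtain
\[
\sum_{T\in\mathcal{T}_h}\|\nabla v_0\|_T^2=\sum_{T\in\mathcal{T}_h}\langle\nabla v_0\cdot\textbf{n},\,v_0\rangle_{\partial T}.
\]
The key observation you should make explicit is that on every edge $\nabla v_0\cdot\textbf{n}=\pm(\nabla v_0\cdot\textbf{n}_e)=\pm v_n\in P_{k-1}(e)$, so in each edge pairing $v_0$ may be replaced by $Q_bv_0=v_b$. On an interior edge $e$ shared by $T_1,T_2$ the two contributions then combine to $\pm\langle v_n,\,Q_bv_0^1-Q_bv_0^2\rangle_e=\pm\langle v_n,\,v_b-v_b\rangle_e=0$, using that $v_n$ and $v_b$ are single-valued; on a boundary edge $v_n=0$. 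Hence $\nabla v_0=0$ on each $T$, so $v_0$ is piecewise constant, and \emph{now} $v_0|_e=Q_bv_0=v_b$ genuinely holds; continuity together with $v_b=0$ on $\partial\Omega$ gives $v_0\equiv 0$, whence $v_b=0$ and $v_n=0$.
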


\begin{proof}
For simplicity, we shall only prove the positivity property for $
|\!|\!| \cdot |\!|\!| $. Assume that $ |\!|\!| v |\!|\!|  = 0$ for
some $v \in V_h^0$. It follows that $\Delta_w v=0$ on each element
T, $Q_b v_0=v_b$ and $\nabla v_0\cdot\textbf{n}_e = v_n$ on each
edge $\partial T$. We claim that $\Delta v_0 = 0$ holds true locally
on each element T. To this end, for any $\varphi \in P_{k-2}(T)$ we
use $\Delta_w v=0$ and the identify
(\ref{Discrete_wLaplacian-useful}) to obtain
\begin{eqnarray}\label{unique1}
0 &=& (\Delta_w v, \varphi)_T\\
\nonumber&=& (\Delta v_0, \varphi)_T+\langle Q_b v_0-v_b, \nabla
\varphi \cdot \textbf{n}\rangle_{\partial T}
+\langle v_n\textbf{n}_e \cdot \textbf{n}-\nabla v_0\cdot \textbf{n}, \varphi \rangle_{\partial T} \\
\nonumber&=& (\Delta v_0, \varphi)_T,
\end{eqnarray}
where we have used the fact that $Q_b v_0-v_b=0$ and
$$
v_n \textbf{n}_e\cdot \textbf{n}- \nabla v_0\cdot \textbf{n}=\pm
(v_n-\nabla v_0\cdot \textbf{n}_e) = 0
$$
in the last equality. The identity (\ref{unique1}) implies that
$\Delta v_0=0$ holds true locally on each element $T$.

Next, we claim that $\nabla v_0 = 0$ also holds true locally on each
element $T$. For this purpose, for any $\phi \in P_{k}(T)$, we
utilize the Gauss formula to obtain
\begin{eqnarray}\label{unique2}
(\nabla v_0, \nabla \phi)_{T}&=&-(\Delta v_0, \phi)_T +\langle
\nabla v_0\cdot \textbf{n}, \phi\rangle_{\partial T} = \langle
\nabla v_0\cdot \textbf{n}, \phi\rangle_{\partial T}.
\end{eqnarray}
By letting $\phi=v_0$ on each element $T$ and summing over all $T$
we obtain
\begin{eqnarray}\label{unique3}
\sum_{T\in \mathcal{T}_h}(\nabla v_0, \nabla v_0)_{T}=\sum_{T\in
\mathcal{T}_h}\langle  \nabla v_0\cdot \textbf{n},
v_0\rangle_{\partial T}.
\end{eqnarray}

For two elements $T_1, T_2\in \mathcal{T}_h$, which share $e\in
\mathcal{E}_h\setminus\partial \Omega$ as a common edge, denote
$v_0^1, v_0^2$ the values of $v$ in the interior of $T_1, T_2$,
respectively. It follows from $Q_b v_0^1=Q_b v_0^2=v_b$ on edge $e$
and the fact $\nabla v_0\cdot\bn_e=v_n\in P_{k-1}(e)$ that
\begin{eqnarray*}
&&\langle  \nabla v_0^1\cdot \textbf{n}_{T_1},
v_0^1\rangle_{e}+\langle \nabla v_0^2\cdot \textbf{n}_{T_2},
v_0^2\rangle_{e}\\
&=&\pm\langle v_n, v_0^1-v_0^2\rangle_{e}=\pm\langle v_n,
Q_bv_0^1-Q_bv_0^2\rangle_{e}=0,
\end{eqnarray*}
where $\textbf{n}_{T_1}, \textbf{n}_{T_2}$ denote the outward unit
normal vectors on $e$ according to elements $T_1, T_2$,
respectively. This, together with $\nabla v_0\cdot \textbf{n}=v_n=0$
on the boundary edge $e\in \mathcal{E}_h\cap \partial \Omega$
implies
$$
\sum_{T\in \mathcal{T}_h}\langle  \nabla v_0\cdot \textbf{n},
v_0\rangle_{\partial T}=0.
$$
It follows from equation (\ref{unique3}) that $\|\nabla v_0\|_T=0$
on each element $T$. Thus, $v_0= const$ locally on each element and
is then continuous across each interior edge $e$ as
$$
v_0|_e = Q_b v_0 = v_b.
$$
The boundary condition of $v_b=0$ then implies that $v\equiv 0$ on
$\Omega$, which completes the proof.
\end{proof}

\begin{lemma}\label{Lemma:Lemma4.2}
The weak Galerkin finite element scheme (\ref{WGalerkin_Algorithm})
has a unique solution.
\end{lemma}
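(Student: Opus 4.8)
The plan is to prove existence and uniqueness of the solution to the finite element scheme \eqref{WGalerkin_Algorithm} by reducing it to a finite-dimensional linear algebra statement. Since $V_h$ is a finite-dimensional space, the affine problem of finding $u_h\in V_h$ with $u_b = Q_b g$, $u_n = Q_b\phi$ on $\partial\Omega$ satisfying \eqref{WGalerkin_Algorithm} for all test functions in $V_h^0$ is a square linear system once we fix a basis. Thus it suffices to show that the associated homogeneous problem has only the trivial solution, i.e., that the bilinear form $a(u_h,v) := (\Delta_w u_h,\Delta_w v)_h + s(u_h,v)$ is nonsingular on $V_h^0\times V_h^0$.

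First I would set $f\equiv 0$, $g\equiv 0$, $\phi\equiv 0$, so that the difference of any two solutions, call it $v$, lies in $V_h^0$ and satisfies $a(v,w)=0$ for all $w\in V_h^0$. Taking $w=v$ gives $(\Delta_w v,\Delta_w v)_h + s(v,v) = 0$, which is exactly the statement $\3bar v\3bar^2 = 0$ in the notation of Lemma~\ref{Lemma:Lemma4.1}. By Lemma~\ref{Lemma:Lemma4.1}, $\3bar\cdot\3bar$ is a genuine norm on $V_h^0$, hence $v\equiv 0$. This forces the homogeneous linear system to have only the zero solution, so the coefficient matrix is invertible, and therefore the original (inhomogeneous) scheme \eqref{WGalerkin_Algorithm} has one and only one solution $u_h$.

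The one point that requires a little care is making the reduction to a square system rigorous: the boundary constraints $u_b = Q_b g$, $u_n = Q_b\phi$ on $\partial\Omega$ are affine conditions that pin down the boundary degrees of freedom, and the remaining unknowns are precisely those of $V_h^0$, so the number of equations (one per basis function of $V_h^0$) matches the number of free unknowns. I would phrase this by writing $u_h = u_h^{(0)} + \tilde u_h$ where $\tilde u_h\in V_h$ is any fixed lifting of the boundary data and $u_h^{(0)}\in V_h^0$ is the genuine unknown; then \eqref{WGalerkin_Algorithm} becomes $a(u_h^{(0)},v) = (f,v_0) - a(\tilde u_h,v)$ for all $v\in V_h^0$, a square system for $u_h^{(0)}$ on the finite-dimensional space $V_h^0$.

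There is no real obstacle here: all the analytic work has already been done in Lemma~\ref{Lemma:Lemma4.1}, and what remains is the standard observation that for a square linear system, injectivity implies bijectivity. The main (minor) thing to get right is the bookkeeping of the boundary degrees of freedom so that the system is genuinely square; once that is in place, the positivity of $\3bar\cdot\3bar$ closes the argument immediately.
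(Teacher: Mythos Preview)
Your proposal is correct and follows essentially the same approach as the paper: take the difference of two putative solutions, observe it lies in $V_h^0$ and satisfies the homogeneous equation, test against itself to obtain $\3bar\cdot\3bar^2=0$, and invoke Lemma~\ref{Lemma:Lemma4.1}. The only difference is that you make the existence half explicit via the lifting/square-system argument, whereas the paper states only the uniqueness computation and leaves the finite-dimensional ``injective $\Rightarrow$ bijective'' step implicit.
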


\begin{proof} Assume $u_h^{(1)}$ and  $u_h^{(2)}$ are two solutions of the WG
finite element scheme (\ref{WGalerkin_Algorithm}). It is obvious
that the difference $\rho_h =  u_h^{(1)}- u_h^{(2)}$ is a finite
element function in $V_h^0$ satisfying
\begin{eqnarray}\label{unique4}
(\Delta_w \rho_h, \Delta_w v)_h+s(\rho_h, v)=0, \ \forall v\in
V_h^0.
\end{eqnarray}
By letting $v=\rho_h$ in above equation (\ref{unique4}) we obtain
the following indentity
\begin{eqnarray*}
(\Delta_w \rho_h, \Delta_w \rho_h)_h+s(\rho_h, \rho_h)=0.
\end{eqnarray*}
It follows from Lemma \ref{Lemma:Lemma4.1} that $\rho_h\equiv 0$,
which shows that $u_h^{(1)}=u_h^{(2)}$. This completes the proof.
\end{proof}

\section{An Error Estimate}\label{Section:H2ErrorEstimate}
The goal of this section is to establish an error estimate for the
WG-FEM solution $u_h$ arising from (\ref{WGalerkin_Algorithm}).

First of all, let us derive an error equation for the WG finite
element solution obtained from (\ref{WGalerkin_Algorithm}). This
error equation is critical in convergence analysis.

\begin{lemma} Let $u$ and $u_h \in V_h$ be the
solution of (\ref{biharmonic})-(\ref{boundary_normal}) and
(\ref{WGalerkin_Algorithm}), respectively. Denote by
$$e_h=Q_hu-u_h$$
the error function between the $L^2$ projection of $u$ and its weak
Galerkin finite element solution. Then the error function $e_h$
satisfies the following equation
\begin{eqnarray}\label{error equation}
(\Delta_{\omega} e_h,\Delta_{\omega} v)_h + s(e_h,v)=\ell_u(v)
\end{eqnarray}
for all $v\in V^0_h$. Here
\begin{eqnarray}\label{error equation-ell}
\ell_u(v) &= &\sum_{T\in\mathcal{T}_h}\langle  \Delta
u-\mathbb{Q}_h\Delta u, \nabla v_0\cdot \mathbf{n}
-v_n\mathbf{n}_e\cdot \mathbf{n}\rangle_{\partial T}\\
\nonumber&& -\sum_{T\in\mathcal{T}_h}\langle  \nabla(\Delta
u-\mathbb{Q}_h\Delta u)\cdot \mathbf{n}, v_0-v_b\rangle_{\partial T}
+s(Q_hu,v).
\end{eqnarray}
\end{lemma}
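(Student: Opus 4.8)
The plan is to derive the error equation by testing the continuous equation against the interior component $v_0$ of an arbitrary $v\in V_h^0$, integrating by parts twice to expose boundary terms, and then comparing the result with the weak Galerkin scheme (\ref{WGalerkin_Algorithm}). First I would start from $\Delta^2 u = f$, multiply by $v_0$, and integrate over each $T$. Integrating by parts twice on each element $T$ gives
$$
(f,v_0)_T = (\Delta u, \Delta v_0)_T + \langle \Delta u, \nabla v_0\cdot\mathbf{n}\rangle_{\partial T} - \langle \nabla(\Delta u)\cdot\mathbf{n}, v_0\rangle_{\partial T}.
$$
Summing over $T\in\mathcal{T}_h$ and using that $\nabla(\Delta u)$ and $\Delta u$ are single-valued on interior edges while $v_b$, $v_n$ are single-valued there and vanish on $\partial\Omega$ (so that we may insert $v_b$ and $v_n\mathbf{n}_e\cdot\mathbf{n}$ for free into the boundary terms), I would rewrite the global identity as
$$
(f,v_0) = \sum_T (\Delta u, \Delta v_0)_T + \sum_T \langle \Delta u, \nabla v_0\cdot\mathbf{n} - v_n\mathbf{n}_e\cdot\mathbf{n}\rangle_{\partial T} - \sum_T \langle \nabla(\Delta u)\cdot\mathbf{n}, v_0 - v_b\rangle_{\partial T}.
$$

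Next I would connect $\sum_T(\Delta u,\Delta v_0)_T$ to $(\Delta_w Q_h u,\Delta_w v)_h$. Using the commutative property (\ref{Commutative}), $\Delta_w Q_h u = \mathbb{Q}_h\Delta u$, so $(\Delta_w Q_h u,\Delta_w v)_T = (\mathbb{Q}_h\Delta u,\Delta_w v)_T = (\Delta u,\Delta_w v)_T$ since $\Delta_w v\in P_{k-2}(T)$. Then I would apply the definition of the discrete weak Laplacian (\ref{Discrete_wLaplacian}) to $\Delta_w v$, followed by the version (\ref{Discrete_wLaplacian-useful}) after integration by parts, to write $(\Delta u,\Delta_w v)_T$ in terms of $(\Delta v_0,\Delta u)_T$ plus boundary terms involving $\Delta u$ against $v_0-v_b$ and $(\nabla v_0 - \mathbf{v}_g)\cdot\mathbf{n}$; but since $\Delta u$ is generally not a polynomial of degree $k-2$, I instead project: replace $\Delta u$ by $\mathbb{Q}_h\Delta u$ in those identities and track the difference $\Delta u - \mathbb{Q}_h\Delta u$ as the residual. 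Combining this with the identity from the previous paragraph, the terms $\sum_T(\Delta v_0,\Delta u)_T$ cancel against $\sum_T(\Delta u,\Delta v_0)_T$, and the boundary terms reorganize precisely into $\ell_u(v)$, with the two projection-error boundary sums in (\ref{error equation-ell}) and a leftover $s(Q_h u,v)$ produced when I add and subtract $s(Q_h u,v)$ to match the left side against $(\Delta_w e_h,\Delta_w v)_h + s(e_h,v)$.

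Finally, subtracting (\ref{WGalerkin_Algorithm}), i.e. $(\Delta_w u_h,\Delta_w v)_h + s(u_h,v) = (f,v_0)$, from the identity $(\Delta_w Q_h u,\Delta_w v)_h + s(Q_h u,v) = (f,v_0) + [\ell_u(v) - s(Q_h u,v) + s(Q_h u,v)]$ — arranged so the right-hand side is exactly $(f,v_0)$ — yields (\ref{error equation}) with $e_h = Q_h u - u_h$. The main obstacle I anticipate is bookkeeping: carefully matching the sign conventions for $v_n\mathbf{n}_e\cdot\mathbf{n} = \pm v_n$ depending on the chosen fixed normal $\mathbf{n}_e$ versus the element outward normal $\mathbf{n}$, and ensuring that every boundary sum over $\partial T$ is legitimately allowed to have $v_b$ or $v_n$ inserted or removed — this requires invoking single-valuedness on $\mathcal{E}_h^0$ and the homogeneous boundary conditions in $V_h^0$ at exactly the right moments. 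The analytic content (integration by parts, the commutative diagram, the definition of $\Delta_w$) is routine; the care lies entirely in the algebra of boundary terms.
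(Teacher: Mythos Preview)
Your proposal is correct and follows essentially the same route as the paper: integrate $\Delta^2 u = f$ against $v_0$ by parts twice on each element, insert $v_b$ and $v_n\mathbf{n}_e\cdot\mathbf{n}$ into the boundary sums via single-valuedness and the homogeneous boundary conditions in $V_h^0$, use the commutative identity $\Delta_w Q_h u = \mathbb{Q}_h\Delta u$ together with (\ref{Discrete_wLaplacian-useful}) (with $\varphi=\mathbb{Q}_h\Delta u\in P_{k-2}(T)$) to convert $(\Delta u,\Delta v_0)_T$ into $(\Delta_w Q_h u,\Delta_w v)_T$ plus boundary residuals in $\Delta u-\mathbb{Q}_h\Delta u$, add $s(Q_h u,v)$ to both sides, and subtract (\ref{WGalerkin_Algorithm}). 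One small slip: in your first displayed identity the signs on the two boundary terms are reversed (the correct form is $(f,v_0)_T=(\Delta u,\Delta v_0)_T-\langle\Delta u,\nabla v_0\cdot\mathbf{n}\rangle_{\partial T}+\langle\nabla(\Delta u)\cdot\mathbf{n},v_0\rangle_{\partial T}$), but you already flagged the sign bookkeeping as the main hazard, and fixing it is routine.
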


\begin{proof} Using (\ref{Discrete_wLaplacian-useful}) with $\varphi=\Delta_{\omega}Q_hu = \mathbb{Q}_h\Delta u$
we obtain
\begin{eqnarray*}
&&(\Delta_{\omega}Q_hu,\Delta_{\omega}v)_T\\
&=& (\Delta v_0,\mathbb{Q}_h\Delta u)_T + \langle
v_0-v_b,\nabla(\mathbb{Q}_h\Delta u)\cdot
\mathbf{n}\rangle_{\partial T}
-\langle  (\nabla v_0-v_n\mathbf{n}_e)\cdot \mathbf{n},\mathbb{Q}_h\Delta u\rangle_{\partial T}\\
&=& (\Delta u,\Delta v_0)_T + \langle
v_0-v_b,\nabla(\mathbb{Q}_h\Delta u)\cdot
\mathbf{n}\rangle_{\partial T}-\langle  (\nabla
v_0-v_n\mathbf{n}_e)\cdot \mathbf{n},\mathbb{Q}_h\Delta
u\rangle_{\partial T} ,
\end{eqnarray*}
which implies that
\begin{eqnarray}\label{EQN:7.2}
(\Delta u,\Delta v_0)_T &=& (\Delta_{\omega}Q_hu,\Delta_\omega v)_T-\langle  v_0-v_b,\nabla(\mathbb{Q}_h\Delta u)\cdot \mathbf{n}\rangle_{\partial T}\\
 &&+ \langle  (\nabla v_0-v_n\mathbf{n}_e)\cdot \mathbf{n},\mathbb{Q}_h\Delta u\rangle_{\partial T}
 .\nonumber
\end{eqnarray}
Next, it follows from the integration by parts that
\begin{eqnarray*}
(\Delta u, \Delta v_0)_T =(\Delta^2u,v_0)_T +\langle  \Delta
u,\nabla v_0\cdot \mathbf{n}\rangle_{\partial T}-\langle
\nabla(\Delta u)\cdot \mathbf{n},v_0\rangle_{\partial T}.
\end{eqnarray*}
By summing over all $T$ and then using the identity
$(\Delta^2u,v_0)=(f, v_0)$ we arrive at
\begin{eqnarray*}
\sum_{T\in\mathcal{T}_h}(\Delta u,\Delta v_0)_T&=&(f,
v_0)+\sum_{T\in\mathcal{T}_h}\langle  \Delta u, \nabla v_0\cdot
\mathbf{n}-v_n\mathbf{n}_e\cdot \mathbf{n}\rangle_{\partial
T} \\
&&-\sum_{T\in\mathcal{T}_h}\langle  \nabla(\Delta u)\cdot
\textbf{n}, v_0-v_b\rangle_{\partial T},
\end{eqnarray*}
where we have used the fact that $v_n$ and $v_b$ vanish on the
boundary of the domain. Combining the above equation with
(\ref{EQN:7.2}) yields
\begin{eqnarray*}
(\Delta_{\omega}Q_hu,\Delta_{\omega}v)_h &=& (f, v_0)
+\sum_{T\in\mathcal{T}_h}\langle  \Delta u-\mathbb{Q}_h\Delta u, (\nabla v_0-v_n\mathbf{n}_e)\cdot \mathbf{n}\rangle_{\partial T}\\
&&-\sum_{T\in\mathcal{T}_h}\langle  \nabla(\Delta
u-\mathbb{Q}_h\Delta u)\cdot \mathbf{n},v_0-v_b\rangle_{\partial T}.
\end{eqnarray*}
Adding $s(Q_hu, v)$ to both sides of the above equation gives
\begin{eqnarray}\label{EQN:7.3}
&&(\Delta_{\omega}Q_hu,\Delta_{\omega}v)_h + s(Q_hu,v)
\\
\nonumber&=&(f, v_0)
+\sum_{T\in\mathcal{T}_h}\langle  \Delta u-\mathbb{Q}_h\Delta u, (\nabla v_0-v_n\mathbf{n}_e)\cdot \mathbf{n}\rangle_{\partial T}\\
\nonumber&&-\sum_{T\in\mathcal{T}_h}\langle  \nabla(\Delta
u-\mathbb{Q}_h\Delta u)\cdot \mathbf{n}, v_0-v_b\rangle_{\partial
T}+s(Q_hu,v).
\end{eqnarray}
Subtracting (\ref{WGalerkin_Algorithm}) from (\ref{EQN:7.3}) leads
to the following error equation
\begin{eqnarray*}
(\Delta_{\omega}e_h,\Delta_{\omega}v)_h+s(e_h,v)&=&
\sum_{T\in\mathcal{T}_h}\langle  \Delta u-\mathbb{Q}_h\Delta u,(\nabla v_0-v_n\mathbf{n}_e)\cdot \mathbf{n}\rangle_{\partial T}\\
&&-\sum_{T\in\mathcal{T}_h}\langle  \nabla(\Delta
u-\mathbb{Q}_h\Delta u)\cdot \mathbf{n}, v_0-v_b\rangle_{\partial
T}+s(Q_hu,v)
\end{eqnarray*}
for all $v\in V^0_h$. This completes the derivation of (\ref{error
equation}).
\end{proof}

The following Theorem presents an optimal order error estimate for
the error function $e_h$ in the trip-bar norm. We believe this
tripe-bar norm provides a discrete analogue of the usual $H^2$-norm.

\begin{theorem}\label{theorem:theorem7.3}Let $u_h\in V_h$ be the weak Galerkin finite element solution
arising from (\ref{WGalerkin_Algorithm}) with finite element
functions of order $k\geq2$. Assume that the exact solution of
(\ref{biharmonic})-(\ref{boundary_normal}) is sufficiently regular
such that $u\in H^{k+2}(\Omega)$. Then, there exists a constant $C$
such that
\begin{eqnarray}\label{error_estimate0}
 \3bar u_h-Q_hu \3bar \leq Ch^{k-1}\ \|u\|_{k+2}.
\end{eqnarray}
The above estimate is of optimal order in terms of the meshsize $h$,
but not in the regularity assumption on the exact solution of the
biharmonic equation.
\end{theorem}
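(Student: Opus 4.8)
The plan is to test the error equation (\ref{error equation}) with $v=e_h$. This is legitimate because $e_h=Q_hu-u_h\in V_h^0$: both $Q_hu$ and $u_h$ carry the same boundary data ($u_b=Q_bg$, $u_n=Q_b\phi$ on $\partial\Omega$), so their difference has vanishing traces. Substituting yields
\[
\3bar e_h\3bar^2=(\Delta_w e_h,\Delta_w e_h)_h+s(e_h,e_h)=\ell_u(e_h),
\]
so the whole estimate reduces to proving $|\ell_u(v)|\le Ch^{k-1}\|u\|_{k+2}\,\3bar v\3bar$ for every $v\in V_h^0$; (\ref{error_estimate0}) then follows by dividing by $\3bar e_h\3bar$. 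I would bound the three groups of terms in (\ref{error equation-ell}) one at a time, in each case pairing the two factors with reciprocal powers of $h_T$ so that the $v$-dependent factor becomes exactly one of the pieces of $s(v,v)^{1/2}$, hence is controlled by $\3bar v\3bar$.

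For the first sum I use $\nabla v_0\cdot\bn-v_n\bn_e\cdot\bn=\pm(\nabla v_0\cdot\bn_e-v_n)$ and Cauchy--Schwarz with weights $h_T^{1/2},h_T^{-1/2}$; the $u$-factor becomes $\big(\sum_T h_T\|\Delta u-\mathbb{Q}_h\Delta u\|_{\partial T}^2\big)^{1/2}$, which by the trace inequality and the approximation properties of the $L^2$ projection $\mathbb{Q}_h$ onto $P_{k-2}$ (Appendix A) is $\lesssim h^{k-1}\|u\|_{k+1}$. For the stabilization term $s(Q_hu,v)$ I split $\nabla(Q_0u)\cdot\bn_e-Q_b(\nabla u\cdot\bn_e)=\nabla(Q_0u-u)\cdot\bn_e+(I-Q_b)(\nabla u\cdot\bn_e)$ and $Q_b(Q_0u)-Q_bu=Q_b(Q_0u-u)$, estimate each piece via the approximation properties of $Q_0,Q_b$ to get $\big(\sum_T h_T^{-1}\|\cdot\|_{\partial T}^2\big)^{1/2}\lesssim h^{k-1}\|u\|_{k+1}$ and $\big(\sum_T h_T^{-3}\|\cdot\|_{\partial T}^2\big)^{1/2}\lesssim h^{k-1}\|u\|_{k+1}$, then pair against the two components of $s(v,v)^{1/2}$.

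The delicate term is the middle sum $-\sum_T\langle\nabla(\Delta u-\mathbb{Q}_h\Delta u)\cdot\bn,\ v_0-v_b\rangle_{\partial T}$, because $\|\nabla(\Delta u-\mathbb{Q}_h\Delta u)\|_{\partial T}$ only behaves like $h_T^{k-5/2}\|u\|_{k+2,T}$ while $v_0-v_b$ is \emph{not} controlled termwise by $s(v,v)$. I would split $v_0-v_b=(v_0-Q_bv_0)+(Q_bv_0-v_b)$. The $(Q_bv_0-v_b)$ piece is paired with weights $h_T^{3/2},h_T^{-3/2}$, and since $\big(\sum_T h_T^{3}\,h_T^{2k-5}\|u\|_{k+2,T}^2\big)^{1/2}=\big(\sum_T h_T^{2k-2}\|u\|_{k+2,T}^2\big)^{1/2}\lesssim h^{k-1}\|u\|_{k+2}$ and $\big(\sum_T h_T^{-3}\|Q_bv_0-v_b\|_{\partial T}^2\big)^{1/2}\le\3bar v\3bar$, this piece is fine. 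The $(v_0-Q_bv_0)$ piece needs the extra structure: because $\nabla(\mathbb{Q}_h\Delta u)$ is a vector polynomial of degree $\le k-3$, its normal trace lies in $P_{k-3}(e)\subset P_{k-1}(e)$ and is reproduced by $Q_b$; since also $v_0-Q_bv_0\perp P_{k-1}(e)$, one gets
$\langle\nabla(\Delta u-\mathbb{Q}_h\Delta u)\cdot\bn,\ v_0-Q_bv_0\rangle_{\partial T}=\langle(I-Q_b)(\nabla\Delta u\cdot\bn),\ v_0-Q_bv_0\rangle_{\partial T}$,
and now $\|(I-Q_b)(\nabla\Delta u\cdot\bn)\|_{\partial T}\lesssim h_T^{k-3/2}\|u\|_{k+1,T}$ — one order better than the crude bound. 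Combined with $\|v_0-Q_bv_0\|_{\partial T}\lesssim h_T^{1/2}\|\nabla v_0\|_T$ (inverse trace plus Poincar\'e on $T$) and an $H^1$-stability bound $\|\nabla v_0\|\lesssim\3bar v\3bar$ — obtained by an argument parallel to the proof of Lemma \ref{Lemma:Lemma4.1}, first deriving $\|\Delta v_0\|_h\lesssim\3bar v\3bar$ from (\ref{Discrete_wLaplacian-useful}) with $\varphi=\Delta v_0$ and inverse inequalities, then using a discrete Poincar\'e inequality on $V_h^0$ — this piece is $\lesssim h^{k-1}\|u\|_{k+1}\3bar v\3bar$.

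Summing the three bounds gives $|\ell_u(v)|\le Ch^{k-1}\|u\|_{k+2}\,\3bar v\3bar$, and the theorem follows. I expect the polynomial-degree bookkeeping in the middle term — verifying that the gained order in $\|(I-Q_b)(\nabla\Delta u\cdot\bn)\|_{\partial T}$ is genuinely available and that the $\|\nabla v_0\|$-type quantity is truly dominated by $\3bar v\3bar$ — to be the only real obstacle; the rest is Cauchy--Schwarz together with the projection/trace estimates collected in Appendix A.
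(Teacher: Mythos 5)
Your proposal follows essentially the same route as the paper's proof: testing the error equation with $v=e_h$, the same splitting $v_0-v_b=(v_0-Q_bv_0)+(Q_bv_0-v_b)$ in the delicate middle term, the same orthogonality trick replacing $\nabla(\mathbb{Q}_h\Delta u)\cdot\mathbf{n}$ by $Q_b(\nabla\Delta u\cdot\mathbf{n})$ to gain an order of $h$, and the same stability chain $\|\Delta_h v_0\|\lesssim\3bar v\3bar$ plus a discrete Poincar\'e-type argument to get $\|\nabla_h v_0\|\lesssim\3bar v\3bar$ (the paper's Lemmas A.4--A.8). The only slip is cosmetic: the improved edge bound for $(I-Q_b)(\nabla\Delta u\cdot\mathbf{n})$ costs $\|u\|_{k+2}$ rather than $\|u\|_{k+1}$, which is exactly why the theorem's final estimate carries the $H^{k+2}$ norm.
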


\begin{proof} By letting $v =e_h$ in the error equation (\ref{error equation}), we
have
\begin{eqnarray}\label{error_estimate1}
\3bar e_h \3bar^2 = \ell(e_h),
\end{eqnarray}
where
\begin{eqnarray}\label{error_estimate1-ell}
 \ell(e_h) &=&\sum_{T\in\mathcal{T}_h}\langle  \Delta u-\mathbb{Q}_h\Delta u, (\nabla e_0-e_n\mathbf{n}_e)\cdot \mathbf{n}\rangle_{\partial T} \\
\nonumber&&-\sum_{T\in\mathcal{T}_h}\langle  \nabla(\Delta u-\mathbb{Q}_h\Delta u)\cdot \mathbf{n}, e_0-e_b\rangle_{\partial T} \\
\nonumber&&+\sum_{T\in\mathcal{T}_h}h_T^{-1}\langle  \nabla
Q_0u\cdot \mathbf{n}_e-Q_b (\nabla u\cdot \mathbf{n}_e), \nabla e_0
\cdot  \mathbf{n}_e-e_n\rangle_{\partial T} \\
\nonumber&&+\sum_{T\in\mathcal{T}_h}h_T^{-3}\langle  Q_b
Q_0u-Q_bu,Q_b e_0-e_b\rangle_{\partial T}.
\end{eqnarray}

The rest of the proof shall estimate each of the terms on the
right-hand side of (\ref{error_estimate1-ell}). For the first term,
we use the Cauchy-Schwarz inequality and the estimates
(\ref{ineqn3}) and (\ref{ineqn4}) in Lemma \ref{Lemma:Lemma5.2} (see
Appendix A) with $m = k$ to obtain
\begin{eqnarray}\label{error_estimate2}
&&\left|\sum_{T\in\mathcal{T}_h}\langle  \Delta u-\mathbb{Q}_h\Delta
u,
(\nabla e_0-e_n\mathbf{n}_e)\cdot \mathbf{n}\rangle_{\partial T}\right|\\
\nonumber&\leq &\left(\sum_{T\in\mathcal{T}_h}h_T\|\Delta
u-\mathbb{Q}_h\Delta u\|^2_{\partial T}\right)^{\frac{1}{2}}
\left(\sum_{T\in\mathcal{T}_h}h_T^{-1}\|\nabla e_0\cdot \mathbf{n}_e-e_n\|^2_{\partial T}\right)^{\frac{1}{2}}\\
\nonumber&\leq &Ch^{k-1}\|u\|_{k+1} |\!|\!| e_h |\!|\!|.
\end{eqnarray}
For the second term, using Lemma \ref{Lemma:Lemma5.2},
\ref{DiscretePoincareinequality} and \ref{Lemma:Lemma6.5} we obtain
\begin{eqnarray}\label{error_estimate3}
&&\left|\sum_{T\in\mathcal{T}_h}\langle  \nabla(\Delta u-\mathbb{Q}_h\Delta u)\cdot \mathbf{n}, e_0-e_b\rangle_{\partial T}\right|\\
\nonumber&\leq& \left|\sum_{T\in\mathcal{T}_h}\langle  \nabla(\Delta
u-\mathbb{Q}_h\Delta u)
\cdot \mathbf{n}, Q_be_0-e_b\rangle_{\partial T}\right|\\
\nonumber&&+\left|\sum_{T\in\mathcal{T}_h}\langle  \nabla(\Delta
u-\mathbb{Q}_h\Delta u)\cdot \mathbf{n}, e_0-Q_be_0\rangle_{\partial
T}\right|
\\
\nonumber&=& \left|\sum_{T\in\mathcal{T}_h}\langle  \nabla(\Delta
u-\mathbb{Q}_h\Delta u)
\cdot \mathbf{n}, Q_be_0-e_b\rangle_{\partial T}\right|\\
\nonumber&&+\left|\sum_{T\in\mathcal{T}_h}\langle  (\nabla( \Delta
u)-Q_b(\nabla(\Delta u)))\cdot \mathbf{n},
e_0-Q_be_0\rangle_{\partial T}\right|
\\
\nonumber&\leq& \left(\sum_{T\in\mathcal{T}_h}h^3_T\|\nabla( \Delta
u-\mathbb{Q}_h\Delta u)\|^2_{\partial
T}\right)^{\frac{1}{2}}\cdot\left(\sum_{T\in\mathcal{T}_h}h_T^{-3}\|Q_be_0-e_b\|^2_{\partial
T}\right)^{\frac{1}{2}}
\\
\nonumber&& +\left(\sum_{T\in\mathcal{T}_h}\|\nabla( \Delta
u)-Q_b(\nabla(\Delta u))\|^2_{\partial
T}\right)^{\frac{1}{2}}\cdot\left(\sum_{T\in\mathcal{T}_h}\|e_0-Q_be_0\|^2_{\partial
T}\right)^{\frac{1}{2}}
\\
\nonumber&\leq& Ch^{k-1}\ \|u\|_{k+2} \3bar e_h\3bar,
\end{eqnarray}
where the $H^{k+2}$-norm of $u$ is used because the estimate in
Lemma \ref{Lemma:Lemma6.5} is not optimal in terms of the mesh
parameter $h$.

The third and fourth terms can be estimated by using the
Cauchy-Schwarz inequality and the estimates (\ref{ineqn5}) and
(\ref{ineqn6}) in Lemma \ref{Lemma:Lemma5.2} as follows
\begin{eqnarray}\label{error_estimate4}
\left|\sum_{T\in\mathcal{T}_h}h^{-1}_T\langle  \nabla Q_0u\cdot
\mathbf{n}_e-Q_b (\nabla u\cdot \mathbf{n}_e), \nabla e_0 \cdot
\mathbf{n}_e-e_n\rangle_{\partial T}\right|\leq Ch^{k-1}\|u\|_{k+1}
|\!|\!| e_h |\!|\!|
\end{eqnarray}
and
\begin{eqnarray}\label{error_estimate5}
&&\left|\sum_{T\in\mathcal{T}_h}h^{-3}_T\langle  Q_b Q_0u-Q_bu, Q_b
e_0-e_b\rangle_{\partial T}\right| \leq Ch^{k-1}\|u\|_{k+1} |\!|\!|
e_h |\!|\!|.
\end{eqnarray}
Substituting (\ref{error_estimate2})-(\ref{error_estimate5}) into
(\ref{error_estimate1}) gives
$$ |\!|\!|  e_h |\!|\!| ^2\leq Ch^{k-1}\ \|u\|_{k+2} |\!|\!| e_h |\!|\!| ,$$
which implies (\ref{error_estimate0}) and hence completes the proof.
\end{proof}

\section{Error Estimates in $L^2$}\label{Section:L2ErrorEstimates}

In this section, we shall establish some error estimates for all
three components of the error function $e_h$ in the standard $L^2$
norm.

First of all, let us derive an error estimate for the first
component of the error function $e_h$ by applying the usual duality
argument in the finite element analysis. To this end, we consider
the problem of seeking $\varphi$ such that
\begin{eqnarray}\label{dual_biharmonic}
\Delta^2 \varphi&=&e_0, \quad {\rm in} \ \Omega,
\\
\nonumber\varphi&=&0,\quad\  {\rm on}\
\partial\Omega,
\\
\nonumber\frac{\partial \varphi}{\partial \textbf{n}}&=&0,\quad\
{\rm on}\
\partial\Omega.
\end{eqnarray}

Assume that the dual problem has the $H^4$ regularity property in
the sense that the solution function $\varphi\in H^4$ and there
exists a constant $C$ such that
\begin{eqnarray}\label{regularity_property}
\|\varphi\|_4\le C\|e_0\|.
\end{eqnarray}

\begin{theorem}\label{theorem:theorem7.5}Let
$u_h\in V_h$ be the weak Galerkin finite element solution arising
from (\ref{WGalerkin_Algorithm}) with finite element functions of
order $k\geq2$. Let $k_0=\min\{3, k\}$. Assume that the exact
solution of (\ref{biharmonic})-(\ref{boundary_normal}) is
sufficiently regular such that $u\in H^{k+2}(\Omega)$ and the dual
problem (\ref{dual_biharmonic}) has the $H^4$ regularity. Then,
there exists a constant $C$ such that
\begin{eqnarray}\label{error_estimateL^20}
 \| u_0-Q_0 u \| \leq Ch^{k+k_0-2}\|u\|_{k+1},
\end{eqnarray}
which means we have a sub-optimal order of convergence for $k=2$ and
optimal order of convergence for $k\ge 3$.
\end{theorem}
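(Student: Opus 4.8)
The plan is to use the Aubin--Nitsche duality argument with the dual problem (\ref{dual_biharmonic}). Let $\varphi$ be its solution; since $\varphi=0$ and $\partial\varphi/\partial\mathbf{n}=0$ on $\partial\Omega$, the projection $Q_h\varphi$ lies in $V_h^0$, and by Theorem~\ref{Thm:commutative-property}, $\Delta_w Q_h\varphi=\mathbb{Q}_h\Delta\varphi$ on each $T\in\T_h$. For brevity I would write, for smooth $w$ and $v\in V_h^0$,
\begin{eqnarray*}
\Theta(w,v)&=&\sum_{T\in\T_h}\langle \Delta w-\mathbb{Q}_h\Delta w,(\nabla v_0-v_n\mathbf{n}_e)\cdot\mathbf{n}\rangle_{\partial T}\\
&&-\sum_{T\in\T_h}\langle \nabla(\Delta w-\mathbb{Q}_h\Delta w)\cdot\mathbf{n},v_0-v_b\rangle_{\partial T},
\end{eqnarray*}
so that $\ell_u(v)=\Theta(u,v)+s(Q_hu,v)$ by (\ref{error equation-ell}).

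First I would establish a dual bilinear identity. Repeating the manipulations that produced (\ref{EQN:7.3}) (element-wise integration by parts in $(\Delta\varphi,\Delta v_0)_T$, summation over $T$, cancellation of $s(Q_h\varphi,e_h)$) but with $\varphi$ in place of $u$ and $v=e_h$, and using $\Delta^2\varphi=e_0$ together with $(\Delta^2\varphi,e_0)=\|e_0\|^2$, one gets
\begin{eqnarray*}
(\Delta_w Q_h\varphi,\Delta_w e_h)_h=\|e_0\|^2+\Theta(\varphi,e_h).
\end{eqnarray*}
(This is legitimate because the computation needs only $\varphi\in H^2(T)$, which holds.) On the other hand, putting $v=Q_h\varphi$ in the error equation (\ref{error equation}) gives $(\Delta_w e_h,\Delta_w Q_h\varphi)_h=\ell_u(Q_h\varphi)-s(e_h,Q_h\varphi)$. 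Combining the two and inserting $\ell_u(Q_h\varphi)=\Theta(u,Q_h\varphi)+s(Q_hu,Q_h\varphi)$ yields the representation
\begin{eqnarray*}
\|e_0\|^2=\Theta(u,Q_h\varphi)-\Theta(\varphi,e_h)+s(Q_hu,Q_h\varphi)-s(e_h,Q_h\varphi).
\end{eqnarray*}

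It then remains to bound the four groups of terms on the right. Each is a sum over element boundaries of a product in which one factor is an approximation-error quantity of the smooth datum $u$ (of the form $\Delta u-\mathbb{Q}_h\Delta u$, $\nabla(\Delta u-\mathbb{Q}_h\Delta u)$, $\nabla Q_0u\cdot\mathbf{n}_e-Q_b(\nabla u\cdot\mathbf{n}_e)$, or $Q_b(Q_0u-u)$) and the other is either the corresponding quantity for $\varphi$ or an $e_h$-jump ($\nabla e_0\cdot\mathbf{n}_e-e_n$ or $Q_be_0-e_b$). Splitting $v_0-v_b=(v_0-Q_bv_0)+(Q_bv_0-v_b)$ exactly as in the proof of Theorem~\ref{theorem:theorem7.3}, applying the Cauchy--Schwarz inequality on each $\partial T$ with the matching powers of $h_T$, and invoking the trace and projection estimates of Appendix~A (Lemmas~\ref{Lemma:Lemma5.2}, \ref{Lemma:Lemma6.5}) for the $u$- and $\varphi$-factors, one finds that the $u$-factors contribute $Ch^{k-1}\|u\|_{k+1}$, the $e_h$-jump factors contribute $\3bar e_h\3bar\le Ch^{k-1}\|u\|_{k+2}$ via Theorem~\ref{theorem:theorem7.3}, and the $\varphi$-factors contribute only $Ch^{k_0-1}\|\varphi\|_4$: because $\varphi$ is assumed no smoother than $H^4$, a polynomial of degree $k$ (resp. $k-2$ for $\mathbb{Q}_h$) can exploit at most $H^4$-regularity, which is exactly where the cap $k_0-1=\min\{2,k-1\}$ enters. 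Collecting the four bounds gives $\|e_0\|^2\le Ch^{k+k_0-2}\|u\|_{k+1}\|\varphi\|_4$, and the $H^4$-regularity (\ref{regularity_property}) $\|\varphi\|_4\le C\|e_0\|$ then yields (\ref{error_estimateL^20}).

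The main obstacle is the $\varphi$-side bookkeeping: one must verify term by term that each boundary quantity built from $\varphi$ and its $L^2$-projections is $O(h^{k_0-1}\|\varphi\|_4)$ — in particular the third-order term $\nabla(\Delta\varphi-\mathbb{Q}_h\Delta\varphi)$, whose projection error can draw only on $\Delta\varphi\in H^2$, is what limits the rate to $h^{k_0-1}$ and produces the sub-optimality at $k=2$. A secondary difficulty is to pair the $e_h$-jump terms (which are controlled only through $\3bar e_h\3bar$, hence through Theorem~\ref{theorem:theorem7.3}) with $\varphi$-factors carrying enough extra powers of $h$ that no order in the meshsize is lost in the combination; everything else is a routine application of Cauchy--Schwarz, the Appendix~A estimates, Theorem~\ref{theorem:theorem7.3}, and the regularity bound.
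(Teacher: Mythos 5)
Your overall strategy is exactly the paper's: the representation $\|e_0\|^2=\Theta(u,Q_h\varphi)-\Theta(\varphi,e_h)+s(Q_hu,Q_h\varphi)-s(e_h,Q_h\varphi)$ is precisely the paper's identity (\ref{EQN:8.6}), obtained by the same two substitutions (the dual analogue of (\ref{EQN:7.2}) with $v=e_h$, and $v=Q_h\varphi$ in the error equation (\ref{error equation})), and your observation that $Q_h\varphi\in V_h^0$ is the correct justification for the second substitution. The term-by-term estimation you sketch coincides with the paper's treatment for five of the six resulting terms, and you correctly locate the source of the sub-optimality at $k=2$ in the $H^4$ cap on $\varphi$.

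The one step that would fail as written is the piece $\sum_{T}\langle(\nabla(\Delta\varphi)-Q_b\nabla(\Delta\varphi))\cdot\mathbf{n},\,e_0-Q_be_0\rangle_{\partial T}$ produced by your splitting of $e_0-e_b$ inside $-\Theta(\varphi,e_h)$. Since $\varphi$ is only in $H^4$, (\ref{ineqn7}) gives at best $Ch^{1/2}\|\varphi\|_4$ for the $\varphi$-factor, and the basic estimate (\ref{eqn:eqnLemma6.4}) gives only $Ch^{1/2}\3bar e_h\3bar$ for the $e_0-Q_be_0$ factor; the product is $Ch\|\varphi\|_4\3bar e_h\3bar\le Ch^{k}\|u\|_{k+2}\|\varphi\|_4$, one full order short of the required $h^{k+k_0-2}=h^{k+1}$ when $k\ge3$. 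The paper circumvents this with the second estimate (\ref{eqn:eqnLemma6.4JW}) of Lemma \ref{Lemma:Lemma6.5}, which trades the lost power of $h$ for a term proportional to $\lambda h^{-1}\|e_0\|^2$; this yields a contribution $C\lambda\|\varphi\|_4\|e_0\|\le C\lambda\|e_0\|^2$ on the right of (\ref{EQN:8.6}) that must then be absorbed into the left-hand side by taking $\lambda$ sufficiently small. Your proposal cites Lemma \ref{Lemma:Lemma6.5} but never invokes this kickback step, and without it the claimed rate is not reached for $k\ge3$. (A minor further point, inherited from the paper itself: since the $e_h$-factors are controlled only through $\3bar e_h\3bar\le Ch^{k-1}\|u\|_{k+2}$, the bound one actually obtains carries $\|u\|_{k+2}$ rather than the $\|u\|_{k+1}$ displayed in (\ref{error_estimateL^20}).)
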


\begin{proof}
Testing (\ref{dual_biharmonic}) by error function $e_0$ and then
using the integration by parts gives
\begin{eqnarray*}
\|e_0\|^2&=&(\Delta^2 \varphi, e_0)
\\
&=& \sum_{T\in \mathcal{T}_h} (\Delta \varphi, \Delta
e_0)_T+\sum_{T\in \mathcal{T}_h} \langle\nabla(\Delta \varphi)\cdot
\textbf{n}, e_0\rangle_{\partial T}-\sum_{T\in \mathcal{T}_h}
\langle\Delta \varphi, \nabla e_0\cdot \textbf{n}\rangle_{\partial
T}
\\
&=& \sum_{T\in \mathcal{T}_h} (\Delta \varphi, \Delta
e_0)_T+\sum_{T\in \mathcal{T}_h} \langle\nabla(\Delta \varphi)\cdot
\textbf{n}, e_0-e_b\rangle_{\partial T} \\
&&-\sum_{T\in
\mathcal{T}_h} \langle\Delta \varphi, (\nabla
e_0-e_n\textbf{n}_e)\cdot \textbf{n}\rangle_{\partial T},
\end{eqnarray*}
where we have used the fact that $e_n$ and $e_b$ vanishes on the
boundary of the domain $\Omega$. By letting $u=\varphi$ and
$v_0=e_h$ in (\ref{EQN:7.2}), we can rewrite the above equation as
follows
\begin{eqnarray*}
\|e_0\|^2&=&(\Delta_w Q_h\varphi,\Delta_w e_h)_h+\sum_{T\in
\mathcal{T}_h} \langle (\nabla(\Delta \varphi)-\nabla
(\mathbb{Q}_h\Delta \varphi)\cdot \textbf{n},
e_0-e_b\rangle_{\partial T}
\\
&&-\sum_{T\in \mathcal{T}_h} \langle \Delta
\varphi-\mathbb{Q}_h\Delta \varphi, (\nabla
e_0-e_n\textbf{n}_e)\cdot \textbf{n}\rangle_{\partial T}.
\end{eqnarray*}
Next, by letting $v=Q_h \varphi$, from the error equation
(\ref{error equation}), we have
\begin{eqnarray*}
(\Delta_w Q_h\varphi,\Delta_w e_h)_h&=&\sum_{T\in \mathcal{T}_h}
\langle (\Delta u- \mathbb{Q}_h \Delta u, (\nabla Q_0\varphi)\cdot
\textbf{n}-Q_b (\nabla \varphi\cdot
\textbf{n}_e)\textbf{n}_e\cdot\textbf{n}\rangle_{\partial T}
\\
&&-\sum_{T\in \mathcal{T}_h} \langle\nabla(\Delta
u-\mathbb{Q}_h\Delta u)\cdot \textbf{n},
Q_0\varphi-Q_b\varphi\rangle_{\partial T}
\\
&&-s(e_h, Q_h \varphi)+s(Q_h u, Q_h \varphi).
\end{eqnarray*}
Combining the two equations above gives
\begin{eqnarray}\label{EQN:8.6}
\|e_0\|^2&=&\sum_{T\in \mathcal{T}_h} \langle (\nabla(\Delta
\varphi)-\nabla (\mathbb{Q}_h\Delta \varphi)\cdot \textbf{n},
e_0-e_b\rangle_{\partial T}
\\
\nonumber&&-\sum_{T\in \mathcal{T}_h} \langle \Delta
\varphi-\mathbb{Q}_h\Delta \varphi, (\nabla e_0\cdot\textbf{n}_e
-e_n)\cdot \textbf{n}\rangle_{\partial T}
\\
\nonumber&&+\sum_{T\in \mathcal{T}_h} \langle (\Delta u-
\mathbb{Q}_h \Delta u, (\nabla Q_0\varphi)\cdot \textbf{n}-Q_b
(\nabla \varphi\cdot \textbf{n}_e)\textbf{n}_e\cdot
\textbf{n}\rangle_{\partial T}
\\
\nonumber&&-\sum_{T\in \mathcal{T}_h} \langle\nabla(\Delta
u-\mathbb{Q}_h\Delta u)\cdot \textbf{n},
Q_0\varphi-Q_b\varphi\rangle_{\partial T}
\\
\nonumber&&-s(e_h, Q_h \varphi)+s(Q_h u, Q_h \varphi).
\end{eqnarray}

From the Cauchy-Schwarz inequality and Lemma \ref{Lemma:Lemma5.2},
we can estimate the six terms on the right-hand side of the identity
above as follows.

For the first term, it follows from Lemma \ref{Lemma:Lemma5.2},
\ref{ineqn7} and the fact $k_0=\min\{k, 3\}\le 3$ that
\begin{eqnarray}\label{EQN:8.7}
&&\left|\sum_{T\in \mathcal{T}_h} \langle (\nabla(\Delta
\varphi)-\nabla (\mathbb{Q}_h\Delta \varphi))\cdot \textbf{n},
e_0-e_b\rangle_{\partial T}\right|
\\
\nonumber&\le&\left(\sum_{T\in \mathcal{T}_h}h_T^3\|\nabla(\Delta
\varphi)-\nabla (\mathbb{Q}_h\Delta \varphi)\|_{\partial
T}^2\right)^{\frac12} \left(\sum_{T\in \mathcal{T}_h}h_T^{-3}\|Q_b
e_0-e_b\|_{\partial T}^2\right)^{\frac12}
\\
\nonumber&&+\left(\sum_{T\in \mathcal{T}_h}\|\nabla(\Delta
\varphi)-Q_b \nabla (\Delta \varphi)\|_{\partial
T}^2\right)^{\frac12} \left(\sum_{T\in \mathcal{T}_h}\|e_0-Q_b
e_0\|_{\partial T}^2\right)^{\frac12}
\\
\nonumber&\le&\left(\sum_{T\in \mathcal{T}_h}h_T^3\|\nabla(\Delta
\varphi)-\nabla (\mathbb{Q}_h\Delta \varphi)\|_{\partial
T}^2\right)^{\frac12} |\!|\!| e_h |\!|\!|
\\
\nonumber&&+C\lambda\left(\sum_{T\in \mathcal{T}_h}\|\nabla(\Delta
\varphi)-Q_b \nabla (\Delta \varphi)\|_{\partial
T}^2\right)^{\frac12}\cdot h^{-\frac12}\|e_0\|
\\
\nonumber&&+C\left(\sum_{T\in \mathcal{T}_h}\|\nabla(\Delta
\varphi)-Q_b \nabla (\Delta \varphi)\|_{\partial
T}^2\right)^{\frac12} \cdot h^{\frac32}|\!|\!| e_h |\!|\!|
\\
\nonumber&\leq&
Ch^{k_0-1}(\|\varphi\|_{k_0+1}+h\delta_{k_0,2}\|\varphi\|_4) |\!|\!|
e_h |\!|\!| + C\lambda h^{\frac12}\|\varphi\|_4\cdot
h^{-\frac12}\|e_0\|
\\
\nonumber&&
+Ch^{k_0-\frac52}(\|\varphi\|_{k_0+1}+h\delta_{k_0,2}\|\varphi\|_4)
\cdot h^{\frac32}|\!|\!| e_h |\!|\!|
\\
\nonumber&\leq& C h^{k_0-1}\|\varphi\|_4 |\!|\!| e_h|\!|\!| + C
\lambda \|\varphi\|_4 \|e_0\|.
\end{eqnarray}
For the second term, it follows from (\ref{ineqn1}) with $m=k_0$
that
\begin{eqnarray}\label{EQN:8.8}
&&\left|\sum_{T\in \mathcal{T}_h} \langle \Delta
\varphi-\mathbb{Q}_h\Delta \varphi, (\nabla e_0\cdot
\textbf{n}_e-e_n)\cdot \textbf{n}\rangle_{\partial T}\right|
\\
\nonumber&\le &\left(\sum_{T\in \mathcal{T}_h}h_T\|\Delta
\varphi-\mathbb{Q}_h\Delta \varphi\|^2_{\partial T}
\right)^{\frac12}\left(\sum_{T\in \mathcal{T}_h}h_T^{-1}\|\nabla
e_0\cdot \textbf{n}_e-e_n\|^2_{\partial T}\right)^{\frac12}
\\
\nonumber&\leq& Ch^{k_0-1}\|\varphi\|_{k_0+1} |\!|\!| e_h |\!|\!|
\le C h^{k_0-1}\|\varphi\|_4 |\!|\!| e_h |\!|\!|.
\end{eqnarray}
As to the third term, it follows from Cauchy-Schwarz inequality and
Lemma \ref{Lemma:Lemma5.2} that
\begin{eqnarray}\label{EQN:8.9}
&&\left|\sum_{T\in \mathcal{T}_h} \langle \Delta u- \mathbb{Q}_h
\Delta u, (\nabla Q_0\varphi)\cdot \textbf{n}-Q_b (\nabla
\varphi\cdot \textbf{n}_e)\textbf{n}_e\cdot
\textbf{n}\rangle_{\partial T}\right|
\\
\nonumber&\le &\left(\sum_{T\in \mathcal{T}_h}h_T\|\Delta u-
\mathbb{Q}_h \Delta u\|^2_{\partial T}
\right)^{\frac12}\left(\sum_{T\in \mathcal{T}_h}h_T^{-1}\|(\nabla
Q_0\varphi)\cdot \textbf{n}-Q_b (\nabla \varphi\cdot
\textbf{n}_e)\|^2_{\partial T}\right)^{\frac12}
\\
\nonumber&\leq& Ch^{k-1}\|u\|_{k+1}h^{k_0-1}\|\varphi\|_{k_0+1} \le
Ch^{k+k_0-2}\|u\|_{k+1}\|\varphi\|_{4} .
\end{eqnarray}
For the forth term, by using Lemma \ref{Lemma:Lemma5.1}, we have
\begin{eqnarray}\label{EQN:8.10}
&&\left|\sum_{T\in \mathcal{T}_h} \langle\nabla(\Delta
u-\mathbb{Q}_h\Delta u)\cdot \textbf{n},
Q_0\varphi-Q_b\varphi\rangle_{\partial T}\right|
\\
\nonumber&\le &\left(\sum_{T\in \mathcal{T}_h}h_T^3\|\nabla(\Delta
u-\mathbb{Q}_h\Delta u)\|^2_{\partial T}
\right)^{\frac12}\left(\sum_{T\in
\mathcal{T}_h}h_T^{-3}\|Q_0\varphi-\varphi\|^2_{\partial
T}\right)^{\frac12}
\\
\nonumber&\le &
Ch^{k-1}(\|u\|_{k+1}+h\delta_{k,2}\|u\|_4)h^{t_0-1}\|\varphi\|_{k_0+1}
\\
\nonumber&\le &
Ch^{k-1}(\|u\|_{k+k_0-2}+h\delta_{k,2}\|u\|_4)\|\varphi\|_{4}.
\end{eqnarray}
As to the fifth term, we also use the Cauchy-Schwarz inequality and
Lemma \ref{Lemma:Lemma5.2} to obtain
\begin{eqnarray}\label{EQN:8.11}
&&|s(e_h, Q_h \varphi)|
\\
\nonumber&\le & \left|\sum_{T\in \mathcal{T}_h}h_T^{-1} \langle
\nabla e_0\cdot\textbf{n}_e-e_n, \nabla Q_0\varphi\cdot
\textbf{n}_e-Q_b (\nabla \varphi\cdot \textbf{n}_e)\rangle_{\partial
T}\right|
\\
\nonumber &&+ \left|\sum_{T\in \mathcal{T}_h}h_T^{-3} \langle Q_b
e_0-e_b, Q_b Q_0\varphi - Q_b\varphi \rangle_{\partial T}\right|
\\
\nonumber&\le & Ch^{k_0-1}\|\varphi\|_4 |\!|\!| e_h |\!|\!|.
\end{eqnarray}
The last term can be estimated as follows
\begin{eqnarray}\label{EQN:8.12}
&&|s(Q_h u, Q_h \varphi)|
\\
\nonumber&\le & \left|\sum_{T\in \mathcal{T}_h}h_T^{-1}
\langle(\nabla Q_0u\cdot \textbf{n}_e-Q_b (\nabla u\cdot
\textbf{n}_e), (\nabla Q_0\varphi\cdot \textbf{n}_e-Q_b (\nabla
\varphi\cdot \textbf{n}_e)\rangle_{\partial T}\right|
\\
\nonumber&&+ \left|\sum_{T\in \mathcal{T}_h}h_T^{-3} \langle Q_b Q_0
u- Q_b u, Q_b Q_0\varphi - Q_b\varphi \rangle_{\partial T}\right|
\\
\nonumber&\le & Ch^{k-1}\|u\|_{k+1} h^{k_0-1} \|\varphi\|_{k_0+1}
\\
\nonumber&\le & Ch^{k+k_0-2}\|u\|_{k+1} \|\varphi\|_{4}.
\end{eqnarray}
Substituting all the six estimates into (\ref{EQN:8.6}) we obtain
\begin{eqnarray*}
\|e_0\|^2\le
&&Ch^{k+k_0-2}(\|u\|_{k+1}+h\delta_{k,2}\|u\|_4)\|\varphi\|_{4} \\
&&+C h^{k_0-1}\|\varphi\|_4 |\!|\!| e_h |\!|\!|+C \lambda
\|\varphi\|_4 \|e_0\|.
\end{eqnarray*}
Using the regularity estimate (\ref{regularity_property}) and
choosing constant $\lambda$ such that $C \lambda
\|\varphi\|_4<\frac12\|e_0\|$, we arrive at
\begin{eqnarray*}
\|e_0\|&\le& C h^{k_0-1} |\!|\!| e_h |\!|\!|
+Ch^{k+k_0-2}(\|u\|_{k+1}+h\delta_{k,2}\|u\|_4)
\\
&\le& Ch^{k+k_0-2}\|u\|_{k+2}.
\end{eqnarray*}
Together with the $H^2$ error estimate (\ref{error_estimate0}) we
have the desired $L^2$ error estimate (\ref{error_estimateL^20}).
\end{proof}

In order to study the error estimates on edges, we shall introduce
the edge-based $L^2$ norm here. To keep the consistency of order,
the edge-based $L^2$ norm is different from the standard $L^2$ norm.

\begin{definition}\label{edge_norm}
For any function $v$ defined on the edges $\mathcal{E}_h$,
\begin{eqnarray}
\nonumber\|v\|^2_{\mathcal{E}_h}&=&\sum_{e\in\hspace{0.15em}\mathcal{E}_h}h_e
\|v\|^2_{L^2(e)},
\end{eqnarray}
where $h_e$ is the measure of edge $e\in \mathcal{E}_h$.
\end{definition}

Next, we shall derive the estimates for the second and third
components of the error function $e_h$.

\begin{theorem}\label{theorem:theorem7.6}Let
$u_h\in V_h$ be the weak Galerkin finite element solution arising
from (\ref{WGalerkin_Algorithm}) with finite element functions of
order $k\geq2$. Let $k_0=\min\{k,3\}$. Assume that the exact
solution of (\ref{biharmonic})-(\ref{boundary_normal}) is
sufficiently regular such that $u\in H^{k+2}(\omega)$ and the dual
problem (\ref{dual_biharmonic}) has the $H^4$ regularity property.
Then, there exists a constant $C$ such that
\begin{eqnarray}\label{error_estimateL^20-new}
 \| u_b-Q_b u \|_{\mathcal{E}_h}&\leq& Ch^{k+k_0-2}\|u\|_{k+2},
\\ \label{error_estimateL^20-new2}  \| u_n-Q_b
(\nabla u_0\cdot{\bf n}_e)\|_{\mathcal{E}_h}&\leq&
 Ch^{k+k_0-3}\|u\|_{k+2}.
\end{eqnarray}
\end{theorem}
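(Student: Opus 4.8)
The plan is to read both edge quantities off the error function $e_h=Q_hu-u_h=\{e_0,e_b,e_n\mathbf{n}_e\}$ and then bound them using only two facts already in hand: the triple-bar estimate $\3bar e_h\3bar\le Ch^{k-1}\|u\|_{k+2}$ of Theorem~\ref{theorem:theorem7.3}, and the $L^2$ estimate $\|e_0\|=\|u_0-Q_0u\|\le Ch^{k+k_0-2}\|u\|_{k+2}$ of Theorem~\ref{theorem:theorem7.5}. Since $u_b-Q_bu=-e_b$, the bound (\ref{error_estimateL^20-new}) is just an estimate for $\|e_b\|_{\mathcal{E}_h}$. For the second bound, using $u_0=Q_0u-e_0$ and the fact that $e_n=Q_b(\nabla u\cdot\mathbf{n}_e)-u_n\in P_{k-1}(e)$ (so $Q_be_n=e_n$), I would write
\begin{equation*}
u_n-Q_b(\nabla u_0\cdot\mathbf{n}_e)
= Q_b\bigl(\nabla(u-Q_0u)\cdot\mathbf{n}_e\bigr)
+ Q_b\bigl(\nabla e_0\cdot\mathbf{n}_e-e_n\bigr),
\end{equation*}
and likewise split $e_b=(e_b-Q_be_0)+Q_be_0$. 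The reason for these splittings is that $Q_b(\nabla e_0\cdot\mathbf{n}_e-e_n)$ and $e_b-Q_be_0$ are, up to an $L^2(e)$-projection, exactly the integrands of the two stabilization terms defining $s(\cdot,\cdot)$, hence controlled by $\3bar e_h\3bar$, while the remaining pieces are pure approximation quantities.

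For (\ref{error_estimateL^20-new}) I would argue as follows. By shape regularity, $h_e$ is comparable to $h_T$ for each $e\subset\partial T$ and every element has boundedly many faces; converting the edge sum into an element-boundary sum and using $h_T\le h$ together with the definition of $s(\cdot,\cdot)$ gives
\begin{equation*}
\sum_{e\in\mathcal{E}_h}h_e\|e_b-Q_be_0\|^2_{L^2(e)}
\le C\sum_{T\in\mathcal{T}_h}h_T^{4}\,h_T^{-3}\|Q_be_0-e_b\|^2_{\partial T}
\le Ch^{4}\,s(e_h,e_h)\le Ch^{2(k+1)}\|u\|^2_{k+2}.
\end{equation*}
For the other piece, the $L^2$-stability of $Q_b$ and the polynomial trace inequality $\|e_0\|_{\partial T}\le Ch_T^{-1/2}\|e_0\|_T$ (valid since $e_0\in P_k(T)$) give $\sum_{e}h_e\|Q_be_0\|^2_{L^2(e)}\le C\sum_{T}h_T\,h_T^{-1}\|e_0\|^2_T=C\|e_0\|^2$. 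Adding the two, invoking Theorems~\ref{theorem:theorem7.3} and \ref{theorem:theorem7.5}, and noting $k+1\ge k+k_0-2$ since $k_0\le3$, yields $\|e_b\|_{\mathcal{E}_h}\le Ch^{k+1}\|u\|_{k+2}+C\|e_0\|\le Ch^{k+k_0-2}\|u\|_{k+2}$.

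For (\ref{error_estimateL^20-new2}) I would apply the $L^2$-stability of $Q_b$ and $|\mathbf{n}_e|=1$ to the displayed decomposition. The stabilization piece is bounded by $\sum_{e}h_e\|\nabla e_0\cdot\mathbf{n}_e-e_n\|^2_{L^2(e)}\le C\sum_{T}h_T^{2}\,h_T^{-1}\|\nabla e_0\cdot\mathbf{n}_e-e_n\|^2_{\partial T}\le Ch^{2}\,s(e_h,e_h)\le Ch^{2k}\|u\|^2_{k+2}$. The approximation piece is $\le\sum_{e}h_e\|\nabla(u-Q_0u)\|^2_{L^2(e)}$, and a standard trace inequality on $\partial T$ combined with the projection bounds $|u-Q_0u|_{1,T}\le Ch_T^{k}|u|_{k+1,T}$ and $|u-Q_0u|_{2,T}\le Ch_T^{k-1}|u|_{k+1,T}$ (Lemma~\ref{Lemma:Lemma5.1} and the Appendix estimates) gives $\|\nabla(u-Q_0u)\|^2_{\partial T}\le Ch_T^{2k-1}|u|^2_{k+1,T}$, so this piece is of order $h^{k}\|u\|_{k+1}$. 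Hence $\|u_n-Q_b(\nabla u_0\cdot\mathbf{n}_e)\|_{\mathcal{E}_h}\le Ch^{k}\|u\|_{k+2}$, which yields (\ref{error_estimateL^20-new2}) because $h^{k}\le h^{k+k_0-3}$ for $h\le1$ (as $k_0\le3$).

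The routine part is exactly the bookkeeping just indicated: converting edge sums $\sum_{e\in\mathcal{E}_h}h_e(\cdot)$ into element-boundary sums $\sum_{T}h_T\|\cdot\|^2_{\partial T}$ under shape regularity, and tracking the precise powers of $h$ so that the $h_T^{-1}$- and $h_T^{-3}$-weighted stabilization pieces receive the correct weights. The one genuinely structural step—and the part I would single out as the crux—is the two decompositions above, since they are what let us recycle the $h_T^{-1}$- and $h_T^{-3}$-weighted parts of $\3bar e_h\3bar$; once those are in place the rest is Cauchy--Schwarz together with Theorems~\ref{theorem:theorem7.3} and \ref{theorem:theorem7.5} and the trace and approximation estimates collected in the Appendix.
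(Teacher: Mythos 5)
Your proof is correct. For (\ref{error_estimateL^20-new}) you follow essentially the paper's own argument: split $e_b=Q_be_0+(e_b-Q_be_0)$, control the second piece by the $h_T^{-3}$-weighted stabilization term (hence by $h^4\3bar e_h\3bar^2$) and the first by $\|e_0\|^2$ via the polynomial trace inequality, then invoke Theorems \ref{theorem:theorem7.3} and \ref{theorem:theorem7.5}. For (\ref{error_estimateL^20-new2}) your route genuinely differs. The paper splits the third error component as $e_n=\nabla e_0\cdot\mathbf{n}_e-(\nabla e_0\cdot\mathbf{n}_e-e_n)$ and controls $\sum_T h_T\|\nabla e_0\|^2_{\partial T}$ by pushing everything down to $\sum_T h_T^{-2}\|e_0\|^2_T$ with the trace and inverse inequalities, which brings in the $L^2$ estimate of Theorem \ref{theorem:theorem7.5} and produces the rate $h^{k+k_0-3}$. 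You instead write $u_n-Q_b(\nabla u_0\cdot\mathbf{n}_e)=Q_b(\nabla(u-Q_0u)\cdot\mathbf{n}_e)+Q_b(\nabla e_0\cdot\mathbf{n}_e-e_n)$ (valid since $Q_be_n=e_n$), so the non-stabilization piece is a pure approximation error for $Q_0$, bounded by $Ch^{k}\|u\|_{k+1}$ via Lemma \ref{Lemma:Lemma5.1} and the trace inequality, with no recourse to the duality estimate at all. This avoids the lossy inverse-inequality step $\|\nabla e_0\|_T\le Ch_T^{-1}\|e_0\|_T$ and yields the sharper bound $Ch^{k}\|u\|_{k+2}$, which implies (\ref{error_estimateL^20-new2}) because $k_0\le 3$, and is strictly better than $h^{k+k_0-3}$ when $k=2$. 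A further small advantage is that your decomposition addresses the quantity literally stated in the theorem, which involves $\nabla u_0$ of the discrete solution, whereas the paper's computation actually estimates $\|e_n\|_{\mathcal{E}_h}=\|Q_b(\nabla u\cdot\mathbf{n}_e)-u_n\|_{\mathcal{E}_h}$.
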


\begin{proof}
It is obvious that
\begin{eqnarray*}
\|e_b\|^2_{L^2(e)}\leq 2(\|Q_be_0\|^2_{L^2(e)}+\|Q_be_0-e_b\|^2_{L^2(e)}).
\end{eqnarray*}
Summing over all edges, we have
\begin{eqnarray}\label{EQN:8.13}
\quad\| u_b-Q_b u \|^2_{\mathcal{E}_h}&=&\sum_{e\in\mathcal{E}_h}h_e
\|u_b-Q_b u\|^2_{L^2(e)}\\
\nonumber&\leq&2\left(\sum_{e\in\mathcal{E}_h}h_e\|Q_be_0\|^2_{L^2(e)}
+\sum_{e\in\mathcal{E}_h}h_e\|Q_be_0-e_b\|^2_{L^2(e)}\right)\\
\nonumber&&\leq
C\left(\sum_{T\in\mathcal{T}_h}h_T\|Q_be_0\|^2_{L^2(\partial T)}
+\sum_{T\in\mathcal{T}_h}h_T\|Q_be_0-e_b\|^2_{L^2(\partial
T)}\right).
\end{eqnarray}

We shall discuss the two terms separately. For the first part, by
applying the trace inequality (\ref{Trace inequality00}), the
inverse inequality (\ref{Inverse Inequality00}) and the error
estimate for $e_0$ in Theorem \ref{theorem:theorem7.5}, we have
\begin{eqnarray}
\sum_{T\in\mathcal{T}_h}h_T\|Q_be_0\|^2_{L^2(\partial T)}&\leq&\sum_{T\in\mathcal{T}_h}h_T\|e_0\|^2_{L^2(\partial T)}\\
\nonumber&\leq& C\sum_{T\in\mathcal{T}_h}(\|e_0\|^2_{L^2(T)}+h_T^2
\|\nabla e_0\|^2_{L^2(T)})\\
\nonumber&\leq& C\sum_{T\in\mathcal{T}_h}\|e_0\|^2_{L^2(T)}\\
\nonumber&\leq& Ch^{2k+2k_0-4}\|u\|_{k+2}^2.
\end{eqnarray}

For the second part, we use the trip-bar norm to handle the second
part.
\begin{eqnarray}
\sum_{T\in\mathcal{T}_h}h_T\|Q_be_0-e_b\|^2_{L^2(\partial T)}
&\leq& h^4\sum_{T\in\mathcal{T}_h}h_T^{-3}\|Q_be_0-e_b\|^2_{L^2(\partial T)}\leq h^4|\!|\!|e_h|\!|\!|^2\\
\nonumber&\leq& Ch^{2k+2k_0-4}\|u\|_{k+2}^2.
\end{eqnarray}
Combining the above two estimates gives the desired error estimate
(\ref{error_estimateL^20-new}).

Similarly, we establish the error estimates for $e_n$.
\begin{eqnarray}
\|e_n\|^2_{\mathcal{E}_h}&=&\sum_{e\in\mathcal{E}_h}h_e\|e_n\|^2_{L^2(e)}\\
\nonumber&\leq& C\left(\sum_{T\in\mathcal{T}_h}h_T\|\nabla e_0\cdot
{\bf n}_e\|_{\partial T}+\sum_{T\in\mathcal{T}_h}h_T\|\nabla
e_0\cdot
{\bf n}_e-e_n\|_{\partial T}\right)\\
\nonumber&\leq& C\left(\sum_{T\in\mathcal{T}_h}h_T\|\nabla
e_0\|_{\partial T}+ h^2\sum_{T\in\mathcal{T}_h}h_T^{-1}\|\nabla
e_0\cdot
{\bf n}_e-e_n\|_{\partial T}\right)\\
\nonumber&\leq& C\left(\sum_{T\in\mathcal{T}_h}\|\nabla e_0\|_{T}+
h^2|\!|\!|e_h|\!|\!|\right)\\
\nonumber&\leq& C\left(\sum_{T\in\mathcal{T}_h}h_T^{-2}\|e_0\|_{T}+
h^2|\!|\!|e_h|\!|\!|\right)\\
\nonumber&\leq& C(h^{2k+2k_0-6}+h^{2k})\|u\|_{k+2}^2.
\end{eqnarray}

Thus, we have
\begin{eqnarray*}
\|e_n\|_{\mathcal{E}_h}\leq Ch^{k+k_0-3}\|u\|_{k+2},
\end{eqnarray*}
which completes the proof.
\end{proof}

\section{Numerical Results}\label{Section:NumericalResults}

In this section, we would like to report some numerical results for
the weak Galerkin finite element method proposed and analyzed in
previous sections. Here we use the following finite element space
$$
\tilde{V}_h=\{v=\{v_0, v_b, v_n \textbf{n}_e \}, v_0\in P_2(T), v_b,
v_n\in P_{1}(e), T\in \mathcal{T}_h, e\subset \mathcal{E}_h\}.
$$
For any given $v=\{v_0, v_b, v_n \textbf{n}_e \}\in \tilde{V}_h$ and
$\varphi\in P_0(T)$, we compute the discrete weak Laplacian
$\Delta_w v$ on each element $T$ as a function in $P_0(T)$ as
follows
\begin{eqnarray*}
\qquad(\Delta_{w} v, \varphi)_T=(v_0, \Delta \varphi)_T-\langle v_b,
\nabla \varphi\cdot \textbf{n}\rangle_{\partial T}+\langle
v_n\textbf{n}_e\cdot \textbf{n},\varphi\rangle_{\partial T},
\end{eqnarray*}
which could be simplified as
\begin{eqnarray*}
\qquad(\Delta_{w} v, \varphi)_T=\langle  v_n\textbf{n}_e\cdot
\textbf{n},\varphi\rangle_{\partial T}.
\end{eqnarray*}

The error for the weak Galerkin solution is measured in six norms
defined as follows:
\begin{eqnarray*}
\begin{array}{c} \displaystyle|\!|\!| e_h |\!|\!|^2=\sum_{T\in\mathcal{T}_h}
\left(\int_T|\Delta_w v_h|^2 dT+ h_T^{-1}\int_{\partial T}|(\nabla
v_0)\cdot\textbf{n}_e-v_n |^2 ds\right.
\\[2mm]
\displaystyle\hspace{2cm}\left.+ h_T^{-3}\int_{\partial
T}(Q_bv_0-v_b)^2 ds\right) \qquad {\rm (A \ discrete } \ H^2\ {\rm
norm)}\nonumber
\\[3mm]
\displaystyle\|Q_0 v-v_0\|^2=\sum_{T\in\mathcal{T}_h} \int_T|Q_0 v-
v_0|^2 dT \qquad {\rm (Element \ based} \ L^2\ {\rm norm)}
\\[3mm]
\displaystyle\|Q_b
v-v_b\|^2_{\mathcal{E}_h}=\sum_{e\in\mathcal{E}_h} h_e\int_e|Q_b v-
v_b|^2 ds \qquad {\rm (Edge \ based} \ L^2\ {\rm norm\ for} \ v_b
{\rm )}
\\[3mm]
\displaystyle\|Q_b
v-v_n\|^2_{\mathcal{E}_h}=\sum_{e\in\mathcal{E}_h} h_e\int_e|Q_b  v-
v_n|^2 ds \qquad {\rm (Edge \ based} \ L^2\ {\rm norm\ for} \ v_n
{\rm )}
\\[3mm]
\displaystyle\|Q_b v-v_b\|_{\infty}=\max_{e\in\mathcal{E}_h}\{|Q_b
v- v_b|\} \qquad {\rm (Edge \ based} \ L^\infty\ {\rm norm\ for} \
v_b {\rm )}
\\[3mm]
\displaystyle\|Q_b  v-v_n\|_{\infty}=\max_{e\in\mathcal{E}_h}\{|Q_b
(\nabla u_0\cdot{\bf n}_e)- v_n|\} \qquad {\rm (Edge \ based} \
L^\infty\ {\rm norm\ for} \ v_n {\rm )}
\end{array}
\end{eqnarray*}

\begin{example}\label{Example1}{\rm Consider the biharmonic problem (\ref{biharmonic})-(\ref{boundary_normal}) in the square domain
$\Omega=(0,1)^2$. It has the analytic solution
$u(x)=x^2(1-x)^2y^2(1-y)^2$, and the right hand side function $f$ in
(\ref{biharmonic}) is computed to match the exact solution. The mesh
size is denoted by $h=1/n$. Table 6.1 shows that the convergence
rates for the WG-FEM solution in the $H^2$ and $L^2$ norms are of
order $O(h)$ and $O(h^2)$ when $k=2$, respectively.

Table 6.2 shows that the errors and orders of Example \ref{Example1}
in $L^2$ and $L^\infty$ for $e_b$. The numerical results are in
consistency with theory for these two cases.

Table 6.3 shows that the errors and orders of Example \ref{Example1}
in $L^2$ and $L^\infty$ for $e_n$. The numerical results are in
consistency with theory for these two cases. }

\begin{center}
Table 6.1. Errors and orders of Example \ref{Example1} in $H^2$ and $L^2$
with $k=2$. \\
\begin{center}
    \begin{tabular}{ | c || c | c | c | c |}
    \hline
     $h$  & \  $|\!|\!| u_h -Q_h u|\!|\!|$\  & \quad \  order \qquad \,  &  \  $\|u_0-Q_0 u\|$  \   & \quad order \quad \\ \hline \hline
     3.74355e-01   &  3.69061e-01 &  & 4.29897e-02
 &   \\ \hline
    1.91955e-01
 &   1.89785e-01
 &   9.59493e-01
 &   1.11418e-02
 &   1.94801
  \\ \hline
    9.56362e-02
 & 1.01110e-01
 & 9.08440e-01
 & 2.97175e-03
 & 1.90660
  \\ \hline
4.78382e-02
 & 5.57946e-02
 & 8.57728e-01
 & 8.08649e-04
 & 1.87773
  \\ \hline
 2.20971e-02
 & 3.00721e-02
 & 8.91700e-01
 & 2.14457e-04
 & 1.91483
  \\ \hline
 1.10485e-02
 & 1.55286e-02
 & 9.53498e-01
 &5.49264e-05
 & 1.96512
  \\ \hline
    \hline
    \end{tabular}
\end{center}
\end{center}
\newpage

\begin{center}
Table 6.2. Errors and orders of Example \ref{Example1} in $L^2$ and $L^\infty$ for $e_b$
with $k=2$. \\
\begin{center}
    \begin{tabular}{ | c || c | c | c | c |}
    \hline
     $h$  & \  $\|Q_b u-u_b\|_{\mathcal{E}_h}$\  & \quad \  order \qquad \,  &  \  $\|Q_b u-u_b\|_{\infty}$  \   & \quad order \quad \\ \hline \hline
     3.74355e-01   &  1.21967e-01
 &  & 1.18101e-01
 &   \\ \hline
    1.91955e-01  &   3.12884e-02
 &   1.91858
 &   3.27686e-02
 &   1.84964
  \\ \hline
    9.56362e-02
 & 8.39049e-03
 & 1.89880
 & 8.84728e-03
 & 1.88901
  \\ \hline
4.78382e-02
 & 2.28623e-03
 & 1.87578
 & 2.39957e-03
 & 1.88246
  \\ \hline
 2.20971e-02
 & 6.06514e-04
 & 1.91436
 & 6.33868e-04
 & 1.92052
  \\ \hline
 1.10485e-02
 & 1.55351e-04
 & 1.96501
 & 1.62044e-04
 & 1.96780
  \\ \hline
    \hline
    \end{tabular}
\end{center}
\end{center}

\begin{center}
Table 6.3. Errors and orders of Example \ref{Example1} in $L^2$ and $L^\infty$ for $e_n$
with $k=2$. \\
\begin{center}
    \begin{tabular}{ | c || c | c | c | c |}
    \hline
     $h$  & \  $\|Q_b (\nabla u\cdot{\bf n}_e)-u_n\|_{\mathcal{E}_h}$\  & \  order  \,  &  \  $\|Q_b (\nabla u\cdot{\bf n}_e)-u_n\|_{\infty}$  \   &  order  \\ \hline \hline
     3.74355e-01   &  1.18286e-01
     &  & 5.28497e-02
     &   \\ \hline
     1.91858e+00   &   3.12884e-02
 &   1.91858
 &   1.51029e-02
 &   1.80707
  \\ \hline
    9.56362e-02
 & 8.39049e-03
 & 1.89880
 & 7.33970e-03
 & 1.04103
  \\ \hline
4.78382e-02
 & 2.28623e-03
 & 1.87578
 & 3.41617e-03
 & 1.10334
  \\ \hline
 2.20971e-02
 & 6.06514e-04
 & 1.91436
 & 1.18287e-03
 & 1.53009
  \\ \hline
 1.10485e-02
 & 1.55351e-04
 & 1.96501
 & 3.30602e-04
 & 1.83912
  \\ \hline
    \hline
    \end{tabular}
\end{center}
\end{center}

{\color{blue} In Tables 6.4-6.6 we investigate the same problem for
$k = 3$.  Table 6.4 shows that the convergence rates for the WG-FEM
solution in the $H^2$ and $L^2$ norms are of order $O(h^2)$ and
$O(h^4)$. Table 6.5 and 6.6 show the errors and orders in $L^2$ and
$L^\infty$ for $e_b$ and $e_n$, which are also consistent with
theoretical conclusions.}

{\color{blue}
\begin{center}
Table 6.4. Errors and orders of example \ref{Example1} in $H^2$ and
$L^2$
with $k=3$. \\
\begin{center}
    \begin{tabular}{ | c || c | c | c | c |}
    \hline
     $h$  & \  $|\!|\!| u_h -Q_h u|\!|\!|$\  & \quad \  order \qquad \,  &  \  $\|u_0-Q_0 u\|$  \   & \quad order \quad \\
     \hline \hline
     3.74355e-01 & 1.17819e-01 &         & 4.56114e-03 &
     \\ \hline
     1.91955e-01 & 3.56257e-02 & 1.72558 & 4.16403e-04 & 3.45334
     \\ \hline
     9.56362e-02 & 1.00915e-02 & 1.81977 & 3.55158e-05 & 3.55145
     \\ \hline
     4.78382e-02 & 2.56977e-03 & 1.97343 & 2.30985e-06 & 3.94259
     \\ \hline
     2.20971e-02 & 6.44317e-04 & 1.99580 & 1.44990e-07 & 3.99378
     \\ \hline
     1.10485e-02 & 1.61222e-04 & 1.99873 & 9.07702e-09 & 3.99759
     \\ \hline
    \hline
    \end{tabular}
\end{center}
\end{center}
\newpage

\begin{center}
Table 6.5. Errors and orders of example \ref{Example1} in $L^2$ and
$L^\infty$ for $e_b$
with $k=3$. \\
\begin{center}
    \begin{tabular}{ | c || c | c | c | c |}
    \hline
     $h$  & \  $\|Q_b u-u_b\|_{\mathcal{E}_h}$\  & \quad \  order \qquad \,  &  \  $\|Q_b u-u_b\|_{\infty}$  \   & \quad order \quad \\
     \hline \hline
     3.74355e-01 & 8.34847e-03 &         & 1.15414e-02 &
     \\ \hline
     1.91955e-01 & 8.06272e-04 & 3.37217 & 1.08014e-03 & 3.41753
     \\ \hline
     9.56362e-02 & 7.89345e-05 & 3.35254 & 9.02080e-05 & 3.58181
     \\ \hline
     4.78382e-02 & 5.19889e-06 & 3.92438 & 5.93961e-06 & 3.92481
     \\ \hline
     2.20971e-02 & 3.26604e-07 & 3.99259 & 3.72799e-07 & 3.99390
     \\ \hline
     1.10485e-02 & 2.04554e-08 & 3.99699 & 2.33003e-08 & 3.99998
     \\ \hline
    \hline
    \end{tabular}
\end{center}
\end{center}

\begin{center}
Table 6.6. Errors and orders of example \ref{Example1} in $L^2$ and
$L^\infty$ for $e_n$
with $k=3$. \\
\begin{center}
    \begin{tabular}{ | c || c | c | c | c |}
    \hline
     $h$  & \  $\|Q_b  (\nabla u\cdot{\bf n}_e)-u_n\|_{\mathcal{E}_h}$\  & \  order  \,  &  \  $\|Q_b  (\nabla u\cdot{\bf n}_e)-u_n\|_{\infty}$  \   &  order  \\
     \hline \hline
     3.74355e-01   &  5.23031e-02
     &  & 1.15371e-01
     &
     \\ \hline
     1.91858e+00 & 8.83906e-03 & 2.56493 & 1.96390e-02 & 2.55449
     \\ \hline
     9.56362e-02 & 1.50030e-03 & 2.55865 & 3.59916e-03 & 2.44799
     \\ \hline
     4.78382e-02 & 1.89000e-04 & 2.98878 & 4.60320e-04 & 2.96695
     \\ \hline
     2.20971e-02 & 2.33468e-05 & 3.01709 & 5.56932e-05 & 3.04707
     \\ \hline
     1.10485e-02 & 2.89988e-06 & 3.00916 & 6.86324e-06 & 3.02054
     \\ \hline
    \hline
    \end{tabular}
\end{center}
\end{center}}

\end{example}
\begin{example}\label{Example2}
{\rm Consider the biharmonic problem
(\ref{biharmonic})-(\ref{boundary_normal}) in the square domain
$\Omega=(0,1)^2$. It has the analytic solution $u(x)=\sin (\pi
x)\sin (\pi y)$, and the right hand side function $f$ in
(\ref{biharmonic}) is computed accordingly.

The numerical results are presented in Tables 6.7-6.12 which confirm
the theory developed in previous sections. }

\begin{center}
Table 6.7. Errors and orders of Example \ref{Example2} in $H^2$ and $L^2$
with $k=2$. \\
\begin{center}
    \begin{tabular}{ | c || c | c | c | c |}
    \hline
     $h$  & \  $|\!|\!| u_h -Q_h u|\!|\!|$\  & \quad \  order \qquad \,  &  \  $\|u_0-Q_0 u\|$  \   & \quad order \quad \\ \hline \hline
     3.74355e-01   &  3.51847e+01
 &  & 4.18608E+00
 &   \\ \hline
    1.91955e-01
 &   1.79831e+01
 &   9.68306e-01
 &   1.06553e+00
 &   1.97403
  \\ \hline
    9.56362e-02
 & 9.36621e+00
 & 9.41104e-01
 & 2.74735e-01
 & 1.95546
  \\ \hline
4.78382e-02
 & 4.90899e+00
 & 9.32039e-01
 & 7.07013e-02
 & 1.95823

  \\ \hline
 2.20971e-02
 & 2.51557e+00
 & 9.64541e-01
 & 1.79112e-02
 & 1.98087
  \\ \hline
 1.10485e-02
 & 1.26858e+00
 & 9.87671e-01
 & 4.49750e-03
 & 1.99367
  \\ \hline
    \hline
    \end{tabular}
\end{center}
\end{center}

\newpage
\begin{center}
Table 6.8. Errors and orders of Example \ref{Example2} in $L^2$ and $L^\infty$ for $e_b$
with $k=2$. \\
\begin{center}
    \begin{tabular}{ | c || c | c | c | c |}
    \hline
     $h$  & \  $\|Q_b u-u_b\|_{\mathcal{E}_h}$\  & \quad \  order \qquad \,  &  \  $\|Q_b u-u_b\|_{\infty}$  \   & \quad order \quad \\ \hline \hline
     3.74355e-01
     &  1.15398e+01
     &  & 1.10028e+01
     &
     \\ \hline
    1.91955e-01  &   2.99335e+00
 &   1.94679
 &   2.97577e+00
 &   1.88654
  \\ \hline
    9.56362e-02
 & 7.75705e-01
 & 1.94818
 & 7.77671e-01
 & 1.93603
  \\ \hline
4.78382e-02
 & 1.99884e-01
 & 1.95635
 & 2.00300e-01
 & 1.95700
  \\ \hline
 2.20971e-02
 & 5.06547e-02
 & 1.98039
 & 5.07058e-02
 & 1.98194
  \\ \hline
 1.10485e-02
 & 1.27205e-02
 & 1.99354
 & 1.27268e-02
 & 1.99428
  \\ \hline
    \hline
    \end{tabular}
\end{center}
\end{center}
\vskip1cm

\begin{center}
Table 6.9. Errors and orders of Example \ref{Example2} in $L^2$ and $L^\infty$ for $e_n$
with $k=2$. \\
\begin{center}
    \begin{tabular}{ | c || c | c | c | c |}
    \hline
    $h$  & \  $\|Q_b  (\nabla u\cdot{\bf n}_e)-u_n\|_{\mathcal{E}_h}$\  &  \  order  \,  &  \  $\|Q_b  (\nabla u\cdot{\bf n}_e)-u_n\|_{\infty}$  \   &  order  \\ \hline \hline
     3.74355e-01   &  1.15398e+01
 &  & 4.02986e+00
 &   \\ \hline
    1.91955e-01  &   2.99335e+00
 &   1.94679
 &   1.26437e+00
 &   1.67231
  \\ \hline
    9.56362e-02
 & 7.75705e-01
 & 1.94818
 & 4.40635e-01
 & 1.52076
  \\ \hline
4.78382e-02
 & 1.99884e-01
 & 1.95635
 & 1.74400e-01
 & 1.33718
  \\ \hline
 2.20971e-02
 & 5.06547e-02
 & 1.98039
 & 5.22660E-02
 & 1.73846
  \\ \hline
 1.10485e-02
 & 1.27205e-02
 & 1.99354
 & 1.37655e-02
 & 1.92482
  \\ \hline
    \hline
    \end{tabular}
\end{center}
\end{center}

{\color{blue}
\begin{center}
Table 6.10. Errors and orders of example \ref{Example2} in $H^2$ and
$L^2$
with $k=3$. \\
\begin{center}
    \begin{tabular}{ | c || c | c | c | c |}
    \hline
     $h$  & \  $|\!|\!| u_h -Q_h u|\!|\!|$\  & \quad \  order \qquad \,  &  \  $\|u_0-Q_0 u\|$  \   & \quad order \quad \\
     \hline \hline
     3.74355e-01 & 9.17084e+00 &         & 3.37369e-01 &
     \\ \hline
     1.91955e-01 & 2.46720e+00 & 1.89418 & 2.77383e-02 & 3.60438
     \\ \hline
     9.56362e-02 & 6.52418e-01 & 1.91900 & 2.14578e-03 & 3.69231
     \\ \hline
     4.78382e-02 & 1.65736e-01 & 1.97691 & 1.36946e-04 & 3.96982
     \\ \hline
     2.20971e-02 & 4.16442e-02 & 1.99270 & 8.50154e-06 & 4.00974
     \\ \hline
     1.10485e-02 & 1.04302e-02 & 1.99734 & 5.29568e-07 & 4.00484
     \\ \hline
    \hline
    \end{tabular}
\end{center}
\end{center}
\newpage

\begin{center}
Table 6.11. Errors and orders of example \ref{Example2} in $L^2$ and
$L^\infty$ for $e_b$
with $k=3$. \\
\begin{center}
    \begin{tabular}{ | c || c | c | c | c |}
    \hline
     $h$  & \  $\|Q_b u-u_b\|_{\mathcal{E}_h}$\  & \quad \  order \qquad \,  &  \  $\|Q_b u-u_b\|_{\infty}$  \   & \quad order \quad \\
     \hline \hline
     3.74355e-01 & 5.34596e-01 &         & 7.40358e-01 &
     \\ \hline
     1.91955e-01 & 4.42882e-02 & 3.59346 & 6.53790e-02 & 3.50132
     \\ \hline
     9.56362e-02 & 3.79823e-03 & 3.54353 & 5.37615e-03 & 3.60418
     \\ \hline
     4.78382e-02 & 2.44771e-04 & 3.95582 & 3.54304e-04 & 3.92351
     \\ \hline
     2.20971e-02 & 1.51601e-05 & 4.01308 & 2.24297e-05 & 3.98151
     \\ \hline
     1.10485e-02 & 9.43450e-07 & 4.00619 & 1.40631e-06 & 3.99543
     \\ \hline
    \hline
    \end{tabular}
\end{center}
\end{center}

\begin{center}
Table 6.12. Errors and orders of example \ref{Example2} in $L^2$ and
$L^\infty$ for $e_n$
with $k=3$. \\
\begin{center}
    \begin{tabular}{ | c || c | c | c | c |}
    \hline
     $h$  & \  $\|Q_b  (\nabla u\cdot{\bf n}_e)-u_n\|_{\mathcal{E}_h}$\  &  \  order  \,  &  \  $\|Q_b  (\nabla u\cdot{\bf n}_e)-u_n\|_{\infty}$  \   &  order  \\
     \hline \hline
     3.74355e-01 & 3.44921e+00 &         & 8.19181e+00 &
     \\ \hline
     1.91858e+00 & 5.38035e-01 & 2.68049 & 1.42296e+00 & 2.52529
     \\ \hline
     9.56362e-02 & 7.93752e-02 & 2.76094 & 2.26764e-01 & 2.64963
     \\ \hline
     4.78382e-02 & 9.99040e-03 & 2.99007 & 3.26867e-02 & 2.79442
     \\ \hline
     2.20971e-02 & 1.23394e-03 & 3.01727 & 4.33160e-03 & 2.91573
     \\ \hline
     1.10485e-02 & 1.52755e-04 & 3.01398 & 5.50588e-04 & 2.97585
     \\ \hline
    \hline
    \end{tabular}
\end{center}
\end{center}}

\end{example}

\appendix
\section{$L^2$ Projection and Some Technical Results.}\label{Section:ApprProperties}

In this section, we shall present some technical results for the
$L^2$ projection operators with respect to the finite element space
$V_h$. These results are useful for the error estimates of the WG
finite element method.

\begin{lemma}{\rm(\cite{WY2})}\label{Trace inequality}
~\emph{\rm (}Trace Inequality{\rm)} Let $\mathcal{T}_h$ be a
partition of the domain $\Omega$ into polygons in 2D or polyhedra in
3D. Assume that the partition $\mathcal{T}_h$ satisfies the
assumptions (A1), (A2), and (A3) as specified in \cite{WY2}. Then,
there exists a constant $C$ such that for any $T\in \mathcal{T}_h$
and edge/face $e\in\partial T$, we have
\begin{eqnarray}\label{Trace inequality00}\|\theta\|^p_{e}\leq
Ch_T^{-1}(\|\theta\|^p_{T}+h^p_T\|\nabla\theta\|^p_{T}),
\end{eqnarray}
where $\theta\in H^{1}(T)$ is any function.
\end{lemma}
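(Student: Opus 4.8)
The plan is to prove the trace inequality \eqref{Trace inequality00} by a direct divergence-theorem argument carried out on a simplicial subdivision of $T$, which is more convenient for elements of arbitrary polygonal or polyhedral shape than a reference-element scaling argument. Write $\|\cdot\|_{L^{p}(D)}$ for the $L^{p}$-norm on $D$ (for $p=2$ this is the $L^{2}$-norm used throughout the paper). Since smooth functions are dense in $W^{1,p}(T)$ and both sides of \eqref{Trace inequality00} are continuous in the $W^{1,p}(T)$-norm, a standard limiting argument shows it suffices to treat $\theta\in C^{1}(\bar T)$.

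The shape-regularity hypotheses (A1)--(A3) of \cite{WY2} include, in particular, that $T$ is star-shaped with respect to a ball $B$ whose radius is a fixed multiple of $h_{T}$; let $O$ be the centre of $B$. I would subdivide $T$ by coning from $O$: for each flat face of $\partial T$ — triangulating it first when $d=3$ if it is not already a simplex — form the simplex $\mathrm{conv}(O,\cdot)$. This partitions $T$ into finitely many simplices $S$, none of whose faces lying on $\partial T$ are subdivided; in particular the face $e$ is a face of exactly one such simplex $S_{e}$ (in $3$D, after the auxiliary triangulation, $e$ is a union of faces of several simplices, over which one sums at the end). Because the simplices partition $T$, it suffices to establish the local bound $\|\theta\|_{L^{p}(f)}^{p}\le C h_{T}^{-1}\bigl(\|\theta\|_{L^{p}(S)}^{p}+h_{T}^{p}\|\nabla\theta\|_{L^{p}(S)}^{p}\bigr)$ for a single simplex $S$ of the subdivision and the face $f$ of $S$ lying on $\partial T$.

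For this local bound, let $A$ be the vertex of $S$ opposite to $f$ (namely $O$) and put $\xi(\mathbf{x})=\mathbf{x}-A$. Then $\nabla\cdot\xi=d$ and $|\xi|\le h_{T}$ on $S$; moreover $\xi\cdot\mathbf{n}=0$ on every face of $S$ containing $A$, while on $f$ the quantity $\xi\cdot\mathbf{n}$ is constant and equals the height $h_{f}$ of $S$ over $f$ — here the distance from $O$ to the hyperplane of $f$ — so that $h_{f}$ is bounded below by a fixed multiple of $h_{T}$ by the star-shapedness. Applying the divergence theorem to $|\theta|^{p}\xi$,
\begin{equation*}
h_{f}\int_{f}|\theta|^{p}\,ds=\int_{\partial S}|\theta|^{p}\,\xi\cdot\mathbf{n}\,ds=\int_{S}\bigl(d\,|\theta|^{p}+\nabla(|\theta|^{p})\cdot\xi\bigr)\,dx .
\end{equation*}
Using $|\nabla(|\theta|^{p})|\le p\,|\theta|^{p-1}|\nabla\theta|$, $|\xi|\le h_{T}$, and Hölder's inequality, the right-hand side is at most $d\,\|\theta\|_{L^{p}(S)}^{p}+p\,h_{T}\,\|\theta\|_{L^{p}(S)}^{p-1}\|\nabla\theta\|_{L^{p}(S)}$. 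Young's inequality, with the parameter tuned so the powers of $h_{T}$ match, bounds the second term by $(p-1)\|\theta\|_{L^{p}(S)}^{p}+h_{T}^{p}\|\nabla\theta\|_{L^{p}(S)}^{p}$; dividing through by $h_{f}$ and using the lower bound on $h_{f}$ yields the local bound, with $C$ depending only on $d$, $p$, and the regularity constants.

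The main obstacle is purely geometric: justifying the coning subdivision and, especially in three dimensions, the auxiliary triangulation of non-simplicial faces, and checking that $O$ lies at distance proportional to $h_{T}$ from every face of $T$ (which is exactly what star-shapedness with respect to $B$ delivers). Granting this, the remainder is the elementary computation above, and summing the local bounds over the simplices whose distinguished face lies in $e$ gives \eqref{Trace inequality00}; specialising to $p=2$ recovers the trace inequality in the $L^{2}$ form used in Sections~\ref{Section:H2ErrorEstimate} and \ref{Section:L2ErrorEstimates}.
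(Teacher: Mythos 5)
The paper does not prove this lemma at all --- it is quoted verbatim from \cite{WY2}, where the hypotheses (A1)--(A3) are precisely the geometric conditions (star-shapedness with respect to a ball of radius proportional to $h_T$, etc.) that your argument invokes. Your proof is correct and is essentially the standard one from that reference: cone the element from the centre $O$ of the inscribed ball, and on each resulting simplex apply the divergence theorem to $|\theta|^{p}\,\xi$ with $\xi(\mathbf{x})=\mathbf{x}-O$, using that $\xi\cdot\mathbf{n}$ vanishes on the lateral faces and equals the height $h_f\gtrsim h_T$ on the base; the H\"older--Young step and the summation over the sub-simplices are all in order, and the constant indeed depends only on $d$, $p$ and the regularity parameters rather than on the shape of the individual cones. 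Two small points worth making explicit: the geometric fact that ${\rm dist}(O,\mathrm{hyperplane\ of\ }e)\ge \rho h_T$ follows because star-shapedness forces the ball $B$ to lie in the closed half-space cut off by the supporting hyperplane of each flat face; and for general $p$ the natural hypothesis is $\theta\in W^{1,p}(T)$ (the paper's statement with $\theta\in H^1(T)$ is only consistent for $p=2$, which is the only case actually used), so your density reduction should be read in $W^{1,p}$ as you indeed state it.
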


\begin{lemma}{\rm(\cite{WY2})}\label{Inverse Inequality}~{\rm (}Inverse Inequality{\rm )} Let
$\mathcal{T}_h$ be a partition of the domain $\Omega$ into polygons
or polyhedra. Assume that $\mathcal{T}_h$ satisfies all the
assumptions (A1)-(A4) as specified in \cite{WY2}. Then, there exists
a constant $C(n)$ such that
\begin{eqnarray}\label{Inverse Inequality00}
\|\nabla\varphi\|_{T}\leq C(n)h^{-1}_T\|\varphi\|_{T},\quad\forall
T\in\mathcal{T}_h
\end{eqnarray}
for any piecewise polynomial $\varphi$ of degree $n$ on
$\mathcal{T}_h$.
\end{lemma}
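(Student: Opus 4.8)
The plan is to reduce the estimate on a general polygonal or polyhedral element to the classical inverse inequality on simplices, using the shape regularity assumptions (A1)--(A4) of \cite{WY2}. First I would recall that under these assumptions every $T\in\mathcal{T}_h$ admits a sub-triangulation into a bounded number of simplices $\{S_j\}$, each shape regular and quasi-uniform with $T$ in the sense that $c\,h_T\le h_{S_j}\le h_T$ for some constant $c>0$ depending only on the regularity parameters. Since $\varphi|_T$ is a polynomial of degree at most $n$, its restriction to each $S_j$ is again a polynomial of degree at most $n$, so it suffices to prove the inequality on each $S_j$ and sum.

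The core step is the inverse inequality on a single simplex $S_j$. I would introduce an affine map $F\colon\hat S\to S_j$ from a fixed reference simplex, which by shape regularity satisfies $\|\nabla F\|\lesssim h_{S_j}$, $\|\nabla F^{-1}\|\lesssim h_{S_j}^{-1}$ and $|\det\nabla F|\sim h_{S_j}^{d}$. On $\hat S$ the space of polynomials of degree at most $n$ is finite dimensional, hence the seminorm $\hat\varphi\mapsto\|\hat\nabla\hat\varphi\|_{\hat S}$ is dominated by the norm $\hat\varphi\mapsto\|\hat\varphi\|_{\hat S}$ with a constant $C(n)$. Pulling $\varphi$ back to $\hat S$, applying this bound, and pushing forward while tracking the powers of $h_{S_j}$ coming from the change of variables and the chain rule yields $\|\nabla\varphi\|_{S_j}\le C(n)h_{S_j}^{-1}\|\varphi\|_{S_j}\le C(n)c^{-1}h_T^{-1}\|\varphi\|_{S_j}$.

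It then remains to sum: since $\{S_j\}$ partitions $T$ up to a set of measure zero and $\nabla\varphi$ is defined a.e.,
\[
\|\nabla\varphi\|_T^2=\sum_j\|\nabla\varphi\|_{S_j}^2\le C(n)^2c^{-2}h_T^{-2}\sum_j\|\varphi\|_{S_j}^2=C(n)^2c^{-2}h_T^{-2}\|\varphi\|_T^2,
\]
and taking square roots gives (\ref{Inverse Inequality00}) with the constant $C(n)c^{-1}$ renamed $C(n)$.

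The main obstacle is conceptual rather than computational: a polygon or polyhedron with an unbounded number of possible faces is not the affine image of a single reference cell, so the textbook scaling argument does not apply to $T$ directly. The whole point of hypotheses (A1)--(A4) is to supply a uniformly shape-regular sub-triangulation whose simplices have diameter comparable to $h_T$; it is precisely the comparability $h_{S_j}\sim h_T$ that prevents the summation step from losing a power of $h$. Alternatively one may avoid the explicit sub-triangulation: (A1)--(A4) force $T$ to contain and to be contained in concentric balls of radii comparable to $h_T$; applying the inverse inequality on the outer ball and comparing the $L^2$-norm of the polynomial on the two concentric balls, which are equivalent by finite dimensionality and scaling, gives the same conclusion.
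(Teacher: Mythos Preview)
Your argument is correct and is the standard proof of the inverse inequality on polytopal meshes: decompose $T$ into a bounded number of shape-regular simplices comparable in size to $T$ (which is exactly what assumptions (A1)--(A4) in \cite{WY2} guarantee), invoke the classical inverse inequality on each simplex via an affine map to a reference element and equivalence of norms on the finite-dimensional space $P_n(\hat S)$, and sum. The alternative you sketch at the end, based on inscribed and circumscribed balls, is also valid.

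Note, however, that the paper does \emph{not} give its own proof of this lemma; it is simply quoted from \cite{WY2}. So there is no proof in the paper to compare yours against. Your write-up is a faithful reconstruction of how such a result is established, and it matches the spirit of the argument in \cite{WY2}.
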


\subsection{Approximation properties}

The following lemma provides some approximation properties for the
projection operators $Q_h$ and $\mathbb{Q}_h$.

\begin{lemma}{\rm(\cite{MWY3})}\label{Lemma:Lemma5.1} Let $\mathcal{T}_h$ be a finite element partition of
$\Omega$ satisfying the shape regularity assumptions. Then, for any
$0\leq s\leq 2$ and $2\leq m\leq k$ we have
\begin{eqnarray}
\label{ineqn1}&\sum_{T\in \mathcal{T}_h}h^{2s}_T\|u-Q_0u\|^2_{s,T}\leq Ch^{2(m+1)}\|u\|^2_{m+1},\\
\label{ineqn2}&\sum_{T\in \mathcal{T}_h}h^{2s}_T\|\Delta
u-\mathbb{Q}_h\Delta u\|^2_{s,T}\leq Ch^{2(m-1)}\|u\|^2_{m+1}.
\end{eqnarray}
\end{lemma}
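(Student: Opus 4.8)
The plan is to reduce both global inequalities to elementwise approximation bounds for the local $L^2$ projections and then sum over $\mathcal{T}_h$, following the argument of \cite{MWY3}. The key ingredient I would establish first is the local estimate: on each $T\in\mathcal{T}_h$, if $\pi_T$ denotes the $L^2$ projection onto $P_r(T)$, then for all $w\in H^t(T)$ with $0\le s\le t\le r+1$ one has
\[
\|w-\pi_T w\|_{s,T}\le C\,h_T^{\,t-s}\|w\|_{t,T},
\]
with $C$ independent of $T$. Since the elements are arbitrary polygons or polyhedra, this cannot be obtained by mapping to a single reference element; instead I would invoke the shape-regularity hypotheses (A1)--(A4) of \cite{WY2}, which guarantee that each $T$ is star-shaped with respect to a ball of radius comparable to $h_T$ (or is a union of a uniformly bounded number of such pieces). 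On such domains the Bramble--Hilbert/Dupont--Scott theory applies, and a dilation argument rescales the estimate from a configuration of unit diameter to the $h_T$-weighted bound above.

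For \eqref{ineqn1} I would take $\pi_T=Q_0$, so $r=k$, apply the local estimate with $w=u$ and $t=m+1$ (admissible because $m\le k$), obtaining $\|u-Q_0u\|_{s,T}\le C h_T^{\,m+1-s}\|u\|_{m+1,T}$. Multiplying by $h_T^{2s}$ gives the factor $h_T^{2(m+1)}$, and since $h_T\le h$ for every $T\in\mathcal{T}_h$,
\[
\sum_{T\in\mathcal{T}_h}h_T^{2s}\|u-Q_0u\|_{s,T}^2\le C\sum_{T\in\mathcal{T}_h}h_T^{2(m+1)}\|u\|_{m+1,T}^2\le C h^{2(m+1)}\|u\|_{m+1}^2,
\]
which is the claim.

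For \eqref{ineqn2} I would apply the same local estimate with $\pi_T=\mathbb{Q}_h$, so $r=k-2$, and with $w=\Delta u$, $t=m-1$; this is admissible since $m-1\le k-1=r+1$ and $\Delta u\in H^{m-1}(T)$ whenever $u\in H^{m+1}(T)$. This yields $\|\Delta u-\mathbb{Q}_h\Delta u\|_{s,T}\le C h_T^{\,m-1-s}\|\Delta u\|_{m-1,T}\le C h_T^{\,m-1-s}\|u\|_{m+1,T}$. Multiplying by $h_T^{2s}$ produces the factor $h_T^{2(m-1)}$, and summing exactly as before gives $\sum_{T}h_T^{2s}\|\Delta u-\mathbb{Q}_h\Delta u\|_{s,T}^2\le C h^{2(m-1)}\|u\|_{m+1}^2$.

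The main obstacle is entirely in the first step: verifying that the shape-regularity assumptions genuinely deliver a Bramble--Hilbert constant that is uniform over the whole mesh family, since the usual affine-equivalence argument for simplices or parallelepipeds is not available for general polyhedral elements. Once that uniform local estimate is in hand, the rescaling and the summation (using $h_T\le h$ and additivity of the Sobolev norms over the partition) are routine.
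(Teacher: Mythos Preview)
The paper does not actually prove this lemma: it is quoted from \cite{MWY3} and stated without argument, so there is no ``paper's own proof'' to compare against. Your outline is precisely the standard route one finds in that reference and in \cite{WY2}: establish a uniform local Bramble--Hilbert estimate for the $L^2$ projection onto $P_r(T)$ via the shape-regularity assumptions (star-shapedness with respect to a ball of radius $\sim h_T$), apply it with $r=k$ and $r=k-2$ respectively, scale, and sum. That is correct and is essentially what the cited work does.

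One small caveat worth flagging: in your treatment of \eqref{ineqn2} you write that the choice $t=m-1$ is admissible because $m-1\le r+1$, but your own local estimate also requires $s\le t$. When $m=2$ and $s=2$ this fails ($t=1<2=s$), and indeed $\Delta u$ need only lie in $H^1(T)$ if $u\in H^3$, so $\|\Delta u-\mathbb{Q}_h\Delta u\|_{2,T}$ is not controlled by $\|u\|_{3}$ alone. The paper itself tacitly acknowledges this gap: in the proof of Lemma~\ref{Lemma:Lemma5.2}, estimate \eqref{ineqn4}, the extra term $h\delta_{m,2}\|\omega\|_4$ appears precisely to cover this borderline case. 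So the lemma as stated is slightly loose at $s=2$, $m=2$; your argument is fine for all the other index combinations and matches how the result is actually used.
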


\begin{lemma}\label{Lemma:Lemma5.2} Let\ $ 2 \leq m\leq k, \omega \in
H^{m+2}(\Omega)$. There exists a constant $C$ such that the
following estimates hold true:
\begin{eqnarray}\label{ineqn3}
\left(\sum_{T\in\mathcal{T}_h} h_T\| \Delta
\omega-\mathbb{Q}_h\Delta\omega\|^2_{\partial
T}\right)^{\frac{1}{2}}\leq Ch^{m-1}\|\omega\|_{m+1},
\end{eqnarray}
\begin{eqnarray}\label{ineqn4}
\left(\sum_{T\in\mathcal{T}_h}h^3_T\|\nabla(\Delta\omega-\mathbb{Q}_h\Delta\omega)\|^2_{\partial
T}\right)^{\frac{1}{2}}\leq
Ch^{m-1}(\|\omega\|_{m+1}+h\delta_{m,2}\|\omega\|_4),
\end{eqnarray}
\begin{eqnarray}\label{ineqn5}
\left(\sum_{T\in\mathcal{T}_h}h^{-1}_T\|\nabla(Q_0\omega)\cdot
 \textbf{n}_e -Q_b(\nabla\omega\cdot \textbf{n}_e)\|^2_{\partial
T}\right)^{\frac{1}{2}}\leq Ch^{m-1}\|\omega\|_{m+1},
\end{eqnarray}
\begin{eqnarray}\label{ineqn6}
\left(\sum_{T\in\mathcal{T}_h}h^{-3}_T\|Q_b
Q_0\omega-Q_b\omega\|^2_{\partial T}\right)^{\frac{1}{2}}\leq
Ch^{m-1}\|\omega\|_{m+1},
\end{eqnarray}
\begin{eqnarray}\label{ineqn7}
\left(\sum_{T\in\mathcal{T}_h}
\|\nabla(\Delta\omega)-Q_b(\nabla(\Delta\omega))\|^2_{\partial
T}\right)^{\frac{1}{2}}\leq Ch^{m-\frac32}\|\omega\|_{m+2}.
\end{eqnarray}
Here $\delta_{i,j}$ is the usual Kronecker's delta with value $1$
when $i=j$ and value 0 otherwise.
\end{lemma}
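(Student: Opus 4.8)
The plan is to prove each of the five estimates \eqref{ineqn3}--\eqref{ineqn7} separately, since they all follow the same template: combine a trace inequality that moves an $L^2(\partial T)$ norm to $L^2(T)$ plus a scaled gradient term, then invoke the standard polynomial approximation properties of the $L^2$ projections $Q_0$, $Q_b$, and $\mathbb{Q}_h$ already recorded in Lemma~\ref{Lemma:Lemma5.1} (and the Bramble--Hilbert-type bounds behind it). The one tool that does all the lifting is the trace inequality \eqref{Trace inequality00}: for a generic $\theta\in H^1(T)$ one has $\|\theta\|_{\partial T}^2\le Ch_T^{-1}(\|\theta\|_T^2+h_T^2\|\nabla\theta\|_T^2)$. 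Applying this with $\theta=\Delta\omega-\mathbb{Q}_h\Delta\omega$ immediately converts the left-hand side of \eqref{ineqn3} into $C\sum_T(\|\Delta\omega-\mathbb{Q}_h\Delta\omega\|_T^2+h_T^2\|\nabla(\Delta\omega-\mathbb{Q}_h\Delta\omega)\|_T^2)$, and each of these two pieces is controlled by \eqref{ineqn2} (with $s=0$ and $s=1$, and $m$ replaced appropriately) by $Ch^{2(m-1)}\|\omega\|_{m+1}^2$. Taking square roots gives \eqref{ineqn3}.

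For \eqref{ineqn4} I would again use the trace inequality on $\theta=\nabla(\Delta\omega-\mathbb{Q}_h\Delta\omega)$, which produces $C\sum_T h_T^3 h_T^{-1}(\|\nabla(\Delta\omega-\mathbb{Q}_h\Delta\omega)\|_T^2+h_T^2\|D^2(\Delta\omega-\mathbb{Q}_h\Delta\omega)\|_T^2)=C\sum_T h_T^2\|\Delta\omega-\mathbb{Q}_h\Delta\omega\|_{1,T}^2 + C\sum_T h_T^4\|\Delta\omega-\mathbb{Q}_h\Delta\omega\|_{2,T}^2$. The first sum is bounded by \eqref{ineqn2} with $s=1$ giving $Ch^{2(m-1)}\|\omega\|_{m+1}^2$. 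The second sum needs the $s=2$ analogue of \eqref{ineqn2}; here is where the Kronecker delta enters: when $m\ge 3$ the $H^2$-seminorm of the approximation error is still $O(h^{m-1})\|\omega\|_{m+1}$, but when $m=2$ the projection $\mathbb{Q}_h$ onto $P_{k-2}=P_0$ only annihilates constants, so the best available bound for $\|\Delta\omega-\mathbb{Q}_h\Delta\omega\|_{2,T}$ is $O(1)\|\Delta\omega\|_{2,T}\le C\|\omega\|_4$, contributing the extra term $h\delta_{m,2}\|\omega\|_4$ after the $h_T^4$ scaling and a square root. This case split is the only genuinely delicate bookkeeping step.

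Estimates \eqref{ineqn5} and \eqref{ineqn6} are handled the same way. For \eqref{ineqn5}, insert and subtract $\nabla\omega\cdot\mathbf{n}_e$ to write $\nabla(Q_0\omega)\cdot\mathbf{n}_e-Q_b(\nabla\omega\cdot\mathbf{n}_e)=\nabla(Q_0\omega-\omega)\cdot\mathbf{n}_e+(I-Q_b)(\nabla\omega\cdot\mathbf{n}_e)$; the first term is a polynomial gradient so the trace inequality plus \eqref{ineqn1} with $s=1,2$ bounds the $h_T^{-1}$-weighted $L^2(\partial T)$ norm by $Ch^{m-1}\|\omega\|_{m+1}$, while for the second term I would use the standard edge approximation property of $Q_b$, $\|(I-Q_b)\chi\|_{\partial T}\le Ch_T^{1/2+\ell}\|\chi\|_{\ell,T}$ (obtained from a scaled trace/Bramble--Hilbert argument), applied to $\chi=\nabla\omega\cdot\mathbf{n}_e$ with $\ell$ ranging appropriately. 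For \eqref{ineqn6}, note $Q_bQ_0\omega-Q_b\omega=Q_b(Q_0\omega-\omega)$ and $Q_b$ is an $L^2(e)$-contraction, so $\|Q_bQ_0\omega-Q_b\omega\|_{\partial T}\le\|Q_0\omega-\omega\|_{\partial T}$, and then trace inequality plus \eqref{ineqn1} finishes it; the $h_T^{-3}$ weight is exactly compensated because $\|Q_0\omega-\omega\|_{s,T}$ carries an extra $h_T$-power for each unit of $s$ and an $O(h^{m+1})$ baseline, making the net power $h^{m-1}$.

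Finally \eqref{ineqn7} is the simplest in structure but loses half an $h$-power and hence needs $H^{m+2}$ regularity: apply the trace inequality to $\theta=(I-Q_b)(\nabla(\Delta\omega))$ componentwise — or, cleaner, use the direct edge estimate $\|(I-Q_b)\chi\|_{\partial T}\le Ch_T^{m-3/2}\|\chi\|_{m-1,T}$ with $\chi=\nabla(\Delta\omega)$, so that $\|\chi\|_{m-1,T}\le\|\omega\|_{m+2,T}$ — and sum over $T$. The $h^{m-3/2}$ rate and the $\|\omega\|_{m+2}$ norm are exactly what the right-hand side of \eqref{ineqn7} states, which explains the remark in Theorem~\ref{theorem:theorem7.3} that this non-optimal bound forces the overall $H^2$-estimate to be stated in terms of $\|u\|_{k+2}$ rather than $\|u\|_{k+1}$. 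The only real obstacle throughout is keeping the powers of $h_T$ straight under the trace inequality and — in \eqref{ineqn4} — correctly isolating the $m=2$ degenerate case where the reduced-order projection $\mathbb{Q}_h=Q_0^{P_0}$ no longer has enough polynomial degree to deliver the full approximation rate for the second derivatives.
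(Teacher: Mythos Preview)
Your proposal is correct and follows essentially the same approach as the paper: trace inequality \eqref{Trace inequality00} to pass from $\partial T$ to $T$, followed by the approximation estimates \eqref{ineqn1}--\eqref{ineqn2}, with the $\delta_{m,2}$ case in \eqref{ineqn4} handled exactly as you describe. The one place where the paper is slightly slicker is \eqref{ineqn5}: rather than splitting into two terms, the paper observes that $\nabla(Q_0\omega)\cdot\mathbf{n}_e\in P_{k-1}(e)$ is fixed by $Q_b$, so the whole expression equals $Q_b\big((\nabla Q_0\omega-\nabla\omega)\cdot\mathbf{n}_e\big)$ and the $L^2(e)$-contraction property of $Q_b$ reduces everything to a single term $\|(\nabla Q_0\omega-\nabla\omega)\cdot\mathbf{n}_e\|_{\partial T}$, avoiding any separate edge-approximation estimate for $(I-Q_b)$.
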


\begin{proof} To derive (\ref{ineqn3}), we use the trace inequality (\ref{Trace inequality00}) and the
estimate (\ref{ineqn2}) to obtain
\begin{eqnarray*}
&&\sum_{T\in\mathcal{T}_h} h_T\| \Delta \omega-\mathbb{Q}_h\Delta\omega\|^2_{\partial T}\\
&\leq &C\sum_{T\in\mathcal{T}_h}(\| \Delta \omega-\mathbb{Q}_h\Delta\omega\|^2_T+h^2_T\|\nabla(\Delta\omega-\mathbb{Q}_h\Delta\omega)\|^2_T)\\
&\leq &Ch^{2m-2}\|\omega\|^2_{m+1}.
\end{eqnarray*}
As to (\ref{ineqn4}), we use the trace inequality (\ref{Trace
inequality00}) and the estimate (\ref{ineqn2}) to obtain
\begin{eqnarray*}
&&\sum_{T\in\mathcal{T}_h} h^3_T\|\nabla( \Delta \omega-\mathbb{Q}_h\Delta\omega)\|^2_{\partial T}\\
&\leq &C\sum_{T\in\mathcal{T}_h}(h^2_T\| \nabla(\Delta
\omega-\mathbb{Q}_h\Delta\omega
)\|^2_T+h^4_T\|\nabla^2(\Delta\omega-\mathbb{Q}_h\Delta\omega)\|^2_T)\\
&\leq &Ch^{2m-2}(\|\omega\|^2_{m+1}+h^2\delta_{m,2}\|\omega\|^2_4).
\end{eqnarray*}
As to (\ref{ineqn5}), we have from the definition of $Q_b $, the
trace inequality (\ref{Trace inequality00}), and the estimate
(\ref{ineqn1}) that
\begin{eqnarray*}
&&\sum_{T\in\mathcal{T}_h} h^{-1}_T\|\nabla(Q_0\omega)\cdot\textbf{n}_e-Q_b(\nabla\omega\cdot\textbf{n}_e)\|^2_{\partial T}\\
&\leq&\sum_{T\in\mathcal{T}_h}h^{-1}_T\|(\nabla Q_0\omega-\nabla\omega)\cdot \textbf{n}_e\|^2_{\partial T}\\
&\leq & C\sum_{T\in\mathcal{T}_h}(h^{-2}_T\| \nabla
Q_0\omega-\nabla\omega
\|^2_T+\|\nabla Q_0\omega-\nabla\omega\|^2_{1,T})\\
&\leq & Ch^{2m-2}\|\omega\|^2_{m+1}.
\end{eqnarray*}

Notice that $Q_b$ is a linear bounded operator, we use the
definition of $Q_b$ and the trace inequality (\ref{Trace
inequality00}) to obtain
\begin{eqnarray*}
&&\sum_{T\in T_h}h_T^{-3}\|Q_b Q_0\omega-Q_b \omega\|_{\partial T}^2
\\
&\leq& \sum_{T\in T_h}(h_T^{-4}\|Q_0\omega-\omega\|_{T}^2+h_T^{-2}\|\nabla(Q_0\omega-\omega)\|_T^2)\\
&\leq& Ch^{2m-2}\|\omega\|_{m+1}^2.
\end{eqnarray*}
To derive (\ref{ineqn7}), we use the trace inequality (\ref{Trace
inequality00}) and the estimate (\ref{ineqn2}) to obtain
\begin{eqnarray*}
&&\sum_{T\in\mathcal{T}_h} \|\nabla(\Delta\omega)-Q_b(\nabla(\Delta\omega))\|^2_{\partial T}\\
&\leq &C\sum_{T\in\mathcal{T}_h}(h^{-1}_T\|
\nabla(\Delta\omega)-Q_b(\nabla(\Delta\omega))\|^2_T
+h_T\|\nabla(\nabla(\Delta\omega)-Q_b(\nabla(\Delta\omega)))\|^2_T)\\
&\leq &Ch^{2m-3}\|\omega\|^2_{m+2}.
\end{eqnarray*}
This completes the proof of (\ref{ineqn7}), and hence the lemma.
\end{proof}

\subsection{Technical inequalities}
The goal here is to present some technical estimates useful for
deriving error estimates for the WG finite element scheme
(\ref{WGalerkin_Algorithm}).

\begin{lemma}\label{Lemma:Lemma6.4} There exists a constant $C$ such
that, for any $v=\{v_0, v_b,v_n\textbf{n}_e\}\in V_h$, the following
holds true
\begin{eqnarray}\label{JWang-001}
\sum_{T\in \mathcal{T}_h}\|\Delta v_0\|^2_T\leq C|\!|\!| v
|\!|\!|^2.
\end{eqnarray}
\end{lemma}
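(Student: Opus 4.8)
The plan is to test the local identity (\ref{Discrete_wLaplacian-useful}) (with $r=k-2$) against the polynomial $\varphi=\Delta v_0$, which is admissible since $v_0\in P_k(T)$ forces $\Delta v_0\in P_{k-2}(T)$. This produces, on each $T\in\mathcal{T}_h$, the identity
\begin{eqnarray*}
\|\Delta v_0\|_T^2&=&(\Delta_w v,\Delta v_0)_T-\langle v_0-v_b,\nabla(\Delta v_0)\cdot\mathbf{n}\rangle_{\partial T}\\
&&+\langle(\nabla v_0-v_n\mathbf{n}_e)\cdot\mathbf{n},\Delta v_0\rangle_{\partial T},
\end{eqnarray*}
and the goal will then be to bound each of the three terms on the right by $C\|\Delta v_0\|_T$ times a quantity whose square, summed over $\mathcal{T}_h$, is at most $C\,\3bar v\3bar^2=C\big((\Delta_w v,\Delta_w v)_h+s(v,v)\big)$.

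The first term is handled at once by Cauchy--Schwarz, $(\Delta_w v,\Delta v_0)_T\le\|\Delta_w v\|_T\|\Delta v_0\|_T$, and $\sum_T\|\Delta_w v\|_T^2$ is part of $\3bar v\3bar^2$. For the second term I would first note the key cancellation: on each face $e\subset\partial T$ the trace $\nabla(\Delta v_0)\cdot\mathbf{n}|_e$ is a polynomial of degree at most $k-3$ (and is identically $0$ when $k=2$), hence lies in $P_{k-1}(e)$; since $Q_b$ is the $L^2$ projection onto $P_{k-1}(e)$, one gets $\langle v_0-v_b,\nabla(\Delta v_0)\cdot\mathbf{n}\rangle_{\partial T}=\langle Q_bv_0-v_b,\nabla(\Delta v_0)\cdot\mathbf{n}\rangle_{\partial T}$. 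Then I would combine the trace inequality (\ref{Trace inequality00}) with the inverse inequality (\ref{Inverse Inequality00}), applied twice to the polynomial $\Delta v_0$, to obtain $\|\nabla(\Delta v_0)\|_{\partial T}\le Ch_T^{-3/2}\|\Delta v_0\|_T$, so Cauchy--Schwarz bounds the second term by $C\big(h_T^{-3}\|Q_bv_0-v_b\|_{\partial T}^2\big)^{1/2}\|\Delta v_0\|_T$, whose first factor squared is a summand of $s(v,v)$. For the third term I would write $(\nabla v_0-v_n\mathbf{n}_e)\cdot\mathbf{n}=\pm(\nabla v_0\cdot\mathbf{n}_e-v_n)$ and use the trace and inverse inequalities in the coarser form $\|\Delta v_0\|_{\partial T}\le Ch_T^{-1/2}\|\Delta v_0\|_T$, so that Cauchy--Schwarz yields the bound $C\big(h_T^{-1}\|\nabla v_0\cdot\mathbf{n}_e-v_n\|_{\partial T}^2\big)^{1/2}\|\Delta v_0\|_T$, whose first factor squared is the other summand of $s(v,v)$.

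To finish, I would divide through by $\|\Delta v_0\|_T$ (the case $\Delta v_0\equiv 0$ on $T$ being trivial), square, sum over $T\in\mathcal{T}_h$, and apply $(a+b+c)^2\le 3(a^2+b^2+c^2)$ to obtain
\begin{eqnarray*}
\sum_{T\in\mathcal{T}_h}\|\Delta v_0\|_T^2&\le& C\sum_{T\in\mathcal{T}_h}\Big(\|\Delta_w v\|_T^2+h_T^{-3}\|Q_bv_0-v_b\|_{\partial T}^2\\
&&+h_T^{-1}\|\nabla v_0\cdot\mathbf{n}_e-v_n\|_{\partial T}^2\Big)=C\,\3bar v\3bar^2.
\end{eqnarray*}
I expect the one genuinely non-routine point to be the second term: the stabilization $s(v,v)$ controls $h_T^{-3}\|Q_bv_0-v_b\|_{\partial T}^2$ but has no direct handle on $\|v_0-v_b\|_{\partial T}$, so the argument hinges on replacing $v_0-v_b$ by $Q_bv_0-v_b$ via the orthogonality of $v_0-Q_bv_0$ to $P_{k-1}(e)$, which is available precisely because $\nabla(\Delta v_0)\cdot\mathbf{n}$ has degree low enough on each face. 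The rest is bookkeeping with the trace and inverse inequalities and careful tracking of the powers of $h_T$.
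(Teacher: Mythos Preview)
Your proposal is correct and follows essentially the same route as the paper: test (\ref{Discrete_wLaplacian-useful}) with $\varphi=\Delta v_0\in P_{k-2}(T)$, then control the three resulting terms by Cauchy--Schwarz together with the trace and inverse inequalities (\ref{Trace inequality00}) and (\ref{Inverse Inequality00}), matching the $h_T^{-3}$ and $h_T^{-1}$ weights in $s(v,v)$. In fact you are more explicit than the paper about the one subtle point: the paper writes $\langle Q_bv_0-v_b,\nabla(\Delta v_0)\cdot\mathbf{n}\rangle_{\partial T}$ directly, whereas you correctly justify the passage from $v_0-v_b$ to $Q_bv_0-v_b$ via the observation that $\nabla(\Delta v_0)\cdot\mathbf{n}|_e\in P_{k-3}(e)\subset P_{k-1}(e)$.
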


\begin{proof}
From the identity (\ref{Discrete_wLaplacian-useful}) with
$\phi=\Delta v_0$ we have
\begin{eqnarray*}
\|\Delta v_0\|^2_T&=&(\Delta_w v, \Delta v_0)_T-\langle Q_b v_0-v_b,
\nabla(\Delta v_0)\cdot {\bf n} \rangle_{\partial T} +\langle
(\nabla v_0-v_n{\bf n}_e)\cdot {\bf n}, \Delta v_0 \rangle_{\partial
T}.
\end{eqnarray*}
Thus, using the Cauchy-Schwarz inequality, trace inequality, and the
inverse inequality we obtain
\begin{eqnarray*}
\|\Delta v_0\|^2_T
&\leq& \|\Delta_w v\|_T\|\Delta v_0\|_T+\|Q_bv_0-v_b\|_{\partial T}\|\nabla(\Delta v_0)\cdot{\bf n}\|_{\partial T}\\
&&+\|(\nabla v_0-v_n{\bf n}_e)\cdot{\bf n}\|_{\partial T}\| \Delta v_0\|_{\partial T}\\
&\leq& C (\|\Delta_w v\|_T\|\Delta v_0\|_T+h_T^{-\frac{1}{2}}\|Q_bv_0-v_b\|_{\partial T}\|\nabla(\Delta v_0)\cdot{\bf n}\|_{T}\\
&&+h_T^{-\frac{1}{2}}\|(\nabla v_0-v_n{\bf n}_e)\cdot{\bf
n}\|_{\partial T}\| \Delta v_0\|_{T})
\\
&\leq& C(\|\Delta_w v\|_T\|\Delta v_0\|_T+h_T^{-\frac{3}{2}}\|Q_bv_0-v_b\|_{\partial T}\|\Delta v_0\|_{T}\\
&&+h_T^{-\frac{1}{2}}\|(\nabla v_0-v_n{\bf n}_e)\cdot{\bf n}\|_{\partial T}\| \Delta v_0\|_{T}).\\
\end{eqnarray*}
Hence,
$$
\|\Delta v_0\|^2_T\leq C(\|\Delta_w
v\|_T^2+h_T^{-3}\|Q_bv_0-v_b\|_{\partial T}^2 +h_T^{-1}\|(\nabla
v_0-v_n{\bf n}_e)\cdot{\bf n}\|_{\partial T}^2),
$$
which verifies the inequality (\ref{JWang-001}).
\end{proof}

\begin{lemma}\label{DiscretePoincareinequality}
(\cite{CW-JW}, Lemma 10.4) There exists a constant $C$ such that,
for any $v\in V_h^0$, we have the following Poincar\'e inequality:
\begin{eqnarray}\label{DiscretePoincareinequality00}
\|v_0\|^2 \le C \left(\sum_{T\in\T_h}\|\nabla v_0\|_T^2 +h^{-1}
\sum_{T\in \mathcal{T}_h}\|Q_bv_0-v_b\|^2_{\partial T}\right).
\end{eqnarray}
\end{lemma}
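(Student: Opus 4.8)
The statement is precisely the result quoted from \cite{CW-JW}; the plan is to reconstruct its proof by a standard duality (Aubin--Nitsche) argument against an auxiliary second order problem. First I would let $\psi$ solve the Poisson problem $-\Delta\psi=v_0$ in $\Omega$ with $\psi=0$ on $\partial\Omega$, and invoke the $H^2$-regularity of this problem (available in \cite{CW-JW}, or postulated as usual) so that $\psi\in H^2(\Omega)$ and $\|\psi\|_2\le C\|v_0\|$. Testing the equation with $v_0$ and integrating by parts on every element $T\in\T_h$ gives
\[
\|v_0\|^2=\sum_{T\in\T_h}(\nabla\psi,\nabla v_0)_T-\sum_{T\in\T_h}\langle\nabla\psi\cdot\bn,v_0\rangle_{\partial T}.
\]
The volume term is controlled by $\|\nabla\psi\|\big(\sum_{T\in\T_h}\|\nabla v_0\|_T^2\big)^{1/2}\le C\|v_0\|\big(\sum_{T\in\T_h}\|\nabla v_0\|_T^2\big)^{1/2}$, so the whole argument rests on the edge term.

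For the edge term I would use that the normal trace $\nabla\psi\cdot\bn_e$ is single valued on each $e\in\E_h$ (since $\psi\in H^2$) and that $v_b$ is single valued on $\E_h^0$ and vanishes on $\partial\Omega$; the contributions of $v_b$ from adjacent elements then cancel, so $\sum_{T\in\T_h}\langle\nabla\psi\cdot\bn,v_b\rangle_{\partial T}=0$ and the edge term equals $\sum_{T\in\T_h}\langle\nabla\psi\cdot\bn,v_0-v_b\rangle_{\partial T}$. Next I split $v_0-v_b=(v_0-Q_bv_0)+(Q_bv_0-v_b)$. On the high-order part, the $L^2$-orthogonality of $Q_b$ allows subtracting the element mean $\overline{\nabla\psi}_T$ of $\nabla\psi$ (a constant, hence in $P_{k-1}(e)$), so that
\[
\big|\langle\nabla\psi\cdot\bn,v_0-Q_bv_0\rangle_{\partial T}\big|=\big|\langle(\nabla\psi-\overline{\nabla\psi}_T)\cdot\bn,v_0-Q_bv_0\rangle_{\partial T}\big|\le Ch_T\|\psi\|_{2,T}\|\nabla v_0\|_T,
\]
where the trace inequality (\ref{Trace inequality00}) together with the elementwise Poincar\'e bounds $\|\nabla\psi-\overline{\nabla\psi}_T\|_T\le Ch_T\|\psi\|_{2,T}$ and $\|v_0-Q_bv_0\|_{\partial T}\le Ch_T^{1/2}\|\nabla v_0\|_T$ are used. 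On the matched part, the trace inequality (\ref{Trace inequality00}) applied to $\nabla\psi$ gives directly $\big|\langle\nabla\psi\cdot\bn,Q_bv_0-v_b\rangle_{\partial T}\big|\le Ch_T^{-1/2}\|\psi\|_{2,T}\|Q_bv_0-v_b\|_{\partial T}$.

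Summing over $T$, applying the Cauchy--Schwarz inequality in the element index, and using $\|\psi\|_2\le C\|v_0\|$ together with the shape regularity of $\T_h$ (to pass from the local mesh sizes $h_T$ to the global $h$), I obtain
\[
\|v_0\|^2\le C\|v_0\|\left(\Big(\sum_{T\in\T_h}\|\nabla v_0\|_T^2\Big)^{1/2}+h^{-1/2}\Big(\sum_{T\in\T_h}\|Q_bv_0-v_b\|_{\partial T}^2\Big)^{1/2}\right),
\]
and dividing by $\|v_0\|$ and squaring yields (\ref{DiscretePoincareinequality00}). I expect the edge term to be the main obstacle: because the interior trace $v_0|_e$ is a full degree-$k$ polynomial while the right-hand side only controls the degree-$(k-1)$ mismatch $Q_bv_0-v_b$, one must carefully exploit the $L^2$-orthogonality of $Q_b$ to isolate the uncontrolled part $v_0-Q_bv_0$ and absorb it through the broken gradient $\sum_{T}\|\nabla v_0\|_T^2$; secondary technical points are the need for $H^2$-regularity of the auxiliary Poisson problem (or an argument with the available $H^{1+s}$ regularity, as in \cite{CW-JW}) and the use of shape regularity to replace $h_T$ by $h$ in the final bound.
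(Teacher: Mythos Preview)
The paper does not give its own proof of this lemma; it is simply quoted from \cite{CW-JW}, Lemma~10.4. Your duality argument against the auxiliary Poisson problem $-\Delta\psi=v_0$, $\psi|_{\partial\Omega}=0$, is exactly the standard route used in that reference and is carried out correctly: the key step---splitting $v_0-v_b=(v_0-Q_bv_0)+(Q_bv_0-v_b)$ and exploiting the $L^2$-orthogonality of $Q_b$ to subtract a constant from $\nabla\psi$ on each element---is precisely what is needed to absorb the high-order part $v_0-Q_bv_0$ into $\sum_T\|\nabla v_0\|_T^2$.

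Two small points are worth noting. First, full $H^2$-regularity of the Poisson problem requires convexity of $\Omega$, which is not assumed here; as you already observe, on a general polygonal or polyhedral domain one has only $\psi\in H^{1+s}(\Omega)$ for some $s>1/2$, and the argument goes through with the corresponding fractional-order trace and approximation estimates (this is how \cite{CW-JW} proceeds). Second, your bound naturally produces the local weight $h_T^{-1}$ inside the jump sum, whereas the stated inequality has the global $h^{-1}$; since $h_T^{-1}\ge h^{-1}$, passing from the former to the latter requires $h\lesssim h_T$, i.e.\ quasi-uniformity rather than shape regularity alone. The paper uses global $h$ throughout in such estimates, so this assumption is implicit in the statement itself and not a defect of your argument.
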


The following lemma provides an estimate for the term
$\sum_{T\in\T_h}\|\nabla v_0\|_T^2$. Note that $v_0$ is a piecewise
polynomial of degree $k\ge 2$. Thus, Lemma \ref{Lemma:Lemma6.5new}
is concerned only with piecewise polynomials; no boundary condition
is necessary.

\begin{lemma}\label{Lemma:Lemma6.5new}
Let $\v$ be any piecewise polynomial of degree $k\ge 2$ on each
element $T$. Denote by $\nabla_h\v$ and $\Delta_h \v$ the gradient
and Laplacian of $\v$ taken on each element. Then, for any
$\varepsilon>0$, there exists a constant $C$ such that
\begin{eqnarray}\label{eqn:eqnLemma6.4new}
\nonumber\|\nabla_h \v\| ^2&\le& \varepsilon \|\v\|^2 +
C \varepsilon^{-1} \|\Delta_h \v\|^2\\
&&+ C \varepsilon^{-1}h^{-1} \left(\sum_{e\in
\mathcal{E}_h}\int_{e}\left(\frac{\partial \v_L}{\partial
\textbf{n}_L} + \frac{\partial \v_R}{\partial \textbf{n}_R}\right)^2
ds\right)
\\
&&\nonumber+ Ch^{-1}\left(\sum_{e\in \mathcal{E}_h}\int_{e} (Q_b
\v_R-Q_b \v_L)^2 ds\right).
\end{eqnarray}
Here $\v_L$ is the trace of $\v$ on $e$ as seen from the ``left" or
the opposite direction of $\bn_e$. If $e$ is a boundary edge, then
the trace from the outside of $\Omega$ is defined as zero.
\end{lemma}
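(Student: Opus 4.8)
**The plan is to establish the estimate by integrating by parts element by element, collecting boundary terms across interfaces, and absorbing the resulting contributions.** Since $\varphi$ is piecewise polynomial, on each $T$ we may write $\|\nabla\varphi\|_T^2 = (\nabla\varphi,\nabla\varphi)_T = -(\Delta\varphi,\varphi)_T + \langle \partial\varphi/\partial\mathbf{n},\varphi\rangle_{\partial T}$. Summing over all $T\in\mathcal{T}_h$ gives
\begin{eqnarray*}
\|\nabla_h\varphi\|^2 = -(\Delta_h\varphi,\varphi) + \sum_{T\in\mathcal{T}_h}\left\langle \frac{\partial\varphi}{\partial\mathbf{n}},\varphi\right\rangle_{\partial T}.
\end{eqnarray*}
The first term is controlled by Cauchy--Schwarz and Young's inequality: $|(\Delta_h\varphi,\varphi)| \le \tfrac{\varepsilon}{2}\|\varphi\|^2 + \tfrac{1}{2\varepsilon}\|\Delta_h\varphi\|^2$, which produces the first two terms on the right-hand side of \eqref{eqn:eqnLemma6.4new}.

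**The core of the argument is rewriting the jump sum over interfaces.** For an interior edge $e$ shared by $T_L$ and $T_R$ with outward normals $\mathbf{n}_L=-\mathbf{n}_R$, the two contributions combine into $\langle \partial\varphi_L/\partial\mathbf{n}_L, \varphi_L\rangle_e + \langle \partial\varphi_R/\partial\mathbf{n}_R, \varphi_R\rangle_e$. The trick is to split each trace as $\varphi = (\varphi - Q_b\varphi) + Q_b\varphi$, and to regroup using $\partial\varphi_L/\partial\mathbf{n}_L + \partial\varphi_R/\partial\mathbf{n}_R$ as the basic quantity. Writing $a_L = \partial\varphi_L/\partial\mathbf{n}_L$, $a_R = \partial\varphi_R/\partial\mathbf{n}_R$, one uses the algebraic identity $a_L\varphi_L + a_R\varphi_R = (a_L+a_R)\tfrac{\varphi_L+\varphi_R}{2} + \tfrac{a_L-a_R}{2}(\varphi_L-\varphi_R)$ — but this second piece still involves $\varphi_L-\varphi_R$, which is not small. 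So instead one should keep $\varphi_L-\varphi_R$ paired with a \emph{symmetric} normal-derivative combination; the cleaner route is to write $a_L\varphi_L + a_R\varphi_R = (a_L+a_R)\varphi_L + a_R(\varphi_R-\varphi_L)$ and then, on the $\varphi_R-\varphi_L$ term, insert $Q_b$: since $a_R$ on $e$ is a polynomial of degree $\le k-1$ while $\varphi_R-\varphi_L = Q_b\varphi_R - Q_b\varphi_L + (\text{high-order part orthogonal to }P_{k-1}(e))$, we can — after a further trace/inverse estimate on $a_R$ — reduce everything to the two admitted jump quantities plus terms already estimated. Boundary edges contribute with the outside trace set to zero and are handled the same way.

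**I expect the combinatorial bookkeeping of the interface terms to be the main obstacle**, specifically getting the weights $h^{-1}$ correct and ensuring no genuinely uncontrolled quantity (like $\|\nabla_h\varphi\|$ on an interface without a $\varepsilon^{-1}$ prefactor, or a raw jump of $\varphi_0$ in the full $H^1$ trace norm) survives. The enabling estimates are the trace inequality \eqref{Trace inequality00} and the inverse inequality \eqref{Inverse Inequality00}: each boundary term $\|\cdot\|_{\partial T}$ is converted to an interior norm at the cost of $h_T^{-1/2}$, and each normal derivative $\|\nabla\varphi\|_{\partial T}$ is first reduced to $h_T^{-1/2}\|\nabla\varphi\|_T$ and then, where a second derivative appears, to $h_T^{-3/2}\|\varphi\|_T$ or absorbed against $\|\Delta_h\varphi\|$. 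The $\varepsilon$ in front of $\|\varphi\|^2$ arises precisely because the low-order boundary term $\langle (a_L+a_R), \varphi_L\rangle_e$ must be split by Young's inequality as (small)$\cdot\|\varphi\|^2$ plus ($\varepsilon^{-1}h^{-1}$)$\cdot\|a_L+a_R\|_e^2$, matching the structure of \eqref{eqn:eqnLemma6.4new}. Once the interface sum is bounded by $\varepsilon\|\varphi\|^2 + C\varepsilon^{-1}\|\Delta_h\varphi\|^2 + C\varepsilon^{-1}h^{-1}\sum_e\|a_L+a_R\|_{L^2(e)}^2 + Ch^{-1}\sum_e\|Q_b\varphi_R - Q_b\varphi_L\|_{L^2(e)}^2$, combining with the volume estimate above yields \eqref{eqn:eqnLemma6.4new} after renaming constants.
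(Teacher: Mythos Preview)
Your proposal is correct and follows essentially the same route as the paper: elementwise integration by parts, the algebraic identity $a_Lb_L+a_Rb_R=(a_L+a_R)b_L+a_R(b_R-b_L)$ on each edge, replacement of $\varphi_R-\varphi_L$ by $Q_b\varphi_R-Q_b\varphi_L$ (valid because $\partial\varphi_R/\partial\mathbf{n}_R\in P_{k-1}(e)$), trace plus inverse inequalities, and Young's inequality with absorption of the resulting $\tfrac12\|\nabla_h\varphi\|^2$ to the left. The only cosmetic difference is that the paper inserts $Q_b$ at the very start, writing $\int_{\partial T}(\partial\varphi/\partial\mathbf{n})\,\varphi\,ds=\int_{\partial T}(\partial\varphi/\partial\mathbf{n})\,Q_b\varphi\,ds$ before regrouping, which slightly streamlines the bookkeeping; also, the interface sum itself never produces a $\|\Delta_h\varphi\|^2$ term---that term comes only from the volume piece---so your last displayed bound should drop it there.
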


\begin{proof}
On each element $T$, we have
\begin{eqnarray*}
\int_T |\nabla \v|^2 dT &=& - \int_T \v\Delta \v\  dT
+\int_{\partial T}\frac{\partial \v}{\partial \textbf{n}}\, \v \, ds
\\
&=& - \int_T \v\Delta \v dT +\int_{\partial T}\frac{\partial
\v}{\partial \textbf{n}}\, Q_b \v \, ds.
\end{eqnarray*}
Summing over all $T\in \mathcal{T}_h$, we have
\begin{eqnarray}\label{Eqn:EstimateEqn2New}
\|\nabla_h \v\|^2 = - \int_\Omega \v\Delta_h \v dT +\sum_{T\in
\mathcal{T}_h} \int_{\partial T}\frac{\partial \v}{\partial
\textbf{n}} \, Q_b \v \, ds.
\end{eqnarray}
Using the identity $a_L b_L+a_R b_R=(a_L+a_R)b_{L}+a_R(b_R-b_L)$ we
obtain
\begin{eqnarray*}
\sum_{T\in \mathcal{T}_h} \int_{\partial T}\frac{\partial
\v}{\partial \textbf{n}} \, Q_b \v \, ds&=& \sum_{e\in
\mathcal{E}_h}\int_{e}\left(\frac{\partial \v_L}{\partial
\textbf{n}_L} \, Q_b \v_L+ \frac{\partial \v_R}{\partial
\textbf{n}_R} \, Q_b \v_R\, \right) ds
\\
&=& \sum_{e\in \mathcal{E}_h}\int_{e}\left(\frac{\partial
\v_L}{\partial \textbf{n}_L}+ \frac{\partial \v_R}{\partial
\textbf{n}_R}\right)\, Q_b \v_L ds
\\
&&+ \sum_{e\in \mathcal{E}_h}\int_{e} \frac{\partial \v_R}{\partial
\textbf{n}_R}\, (Q_b \v_R-Q_b \v_L) ds.
\end{eqnarray*}
Thus, from the Cauchy-Schwarz inequality we have
\begin{eqnarray}\label{Eqn:EstimateEqn3New}
&&\left|\sum_{T\in \mathcal{T}_h} \int_{\partial T}\frac{\partial
\v}{\partial \textbf{n}} \, Q_b \v \, ds\right|\\
\nonumber&\le& \left(\sum_{e\in
\mathcal{E}_h}\int_{e}\left(\frac{\partial \v_L}{\partial
\textbf{n}_L} + \frac{\partial \v_R}{\partial \textbf{n}_R}\right)^2
ds\right)^{\frac12} \left(\sum_{e\in \mathcal{E}_h}\int_{e}\left|Q_b
\v_L \right|^2 ds\right)^{\frac12}
\\
\nonumber&&+ \left(\sum_{e\in
\mathcal{E}_h}\int_{e}\left|\frac{\partial \v_R}{\partial
\textbf{n}_R}\right|^2 ds\right)^{\frac12} \left(\sum_{e\in
\mathcal{E}_h}\int_{e} (Q_b \v_R-Q_b \v_L)^2 ds\right)^{\frac12}.
\end{eqnarray}
Next, we use the trace inequality (\ref{Trace inequality00}) and the
inverse inequality (\ref{Inverse Inequality00}) to obtain
\begin{eqnarray}\label{Eqn:EstimateEqn4New}
\int_{e}\left|Q_b \v_L \right|^2 ds &\le&
\int_{e}\left|\v_L \right|^2 ds\\
\nonumber&\le& C \left[h^{-1} \int_{T} \v^2 dT + h \int_{T} |\nabla
\v|^2 dT\right]
\\
\nonumber&\le& Ch^{-1} \int_{T} \v^2 dT,
\end{eqnarray}
and
\begin{eqnarray}\label{Eqn:EstimateEqn5New}
\int_{e}\left|\frac{\partial \v_R}{\partial \textbf{n}_R}\right|^2
ds &\le& C \left[h^{-1} \int_{T} |\nabla \v|^2 dT + h \int_{T}
|\nabla^2 \v|^2 dT\right]\\
\nonumber&\le& C h^{-1} \int_{T} |\nabla \v|^2 dT.
\end{eqnarray}
Substituting (\ref{Eqn:EstimateEqn4New}) and
(\ref{Eqn:EstimateEqn5New}) into (\ref{Eqn:EstimateEqn3New}) yields
\begin{eqnarray}\label{Eqn:EstimateEqn6New}
\left|\sum_{T\in \mathcal{T}_h} \int_{\partial T}\frac{\partial
\v}{\partial \textbf{n}} \, Q_b \v \, ds\right|&\le &C
h^{-\frac12}\|\v\| \left(\sum_{e\in
\mathcal{E}_h}\int_{e}\left(\frac{\partial \v_L}{\partial
\textbf{n}_L} + \frac{\partial \v_R}{\partial \textbf{n}_R}\right)^2
ds\right)^{\frac12}
\\
\nonumber&&+ C h^{-\frac12}\|\nabla_h \v\| \left(\sum_{e\in
\mathcal{E}_h}\int_{e} (Q_b \v_R-Q_b \v_L)^2 ds\right)^{\frac12}.
\end{eqnarray}
Substituting (\ref{Eqn:EstimateEqn6New}) into
(\ref{Eqn:EstimateEqn2New}) gives
\begin{eqnarray*}
\|\nabla_h \v\|^2&\le& \|\Delta_h \v\|\ \|\v\| +Ch^{-\frac12}\|\v\|
\left(\sum_{e\in \mathcal{E}_h}\int_{e}\left(\frac{\partial
\v_L}{\partial \textbf{n}_L} + \frac{\partial \v_R}{\partial
\textbf{n}_R}\right)^2 ds\right)^{\frac12}
\\
&&+ C h^{-\frac12}\|\nabla_h \v\|   \left(\sum_{e\in
\mathcal{E}_h}\int_{e} (Q_b \v_R-Q_b \v_L)^2 ds\right)^{\frac12},
\end{eqnarray*}
which, through an use of Young's inequality, implies the desired
estimate (\ref{eqn:eqnLemma6.4new}). This completes the proof.
\end{proof}

\begin{lemma}\label{DiscretePoincareinequality-2}
There exists a constant $C$ such that for any
$v=\{v_0,v_b,v_n\bn_e\}\in V_h^0$ the following Poincar\'e type
inequality holds true
\begin{eqnarray}\label{DiscretePoincareinequality-New}
\|\nabla_h v_0\| \le C \3bar v \3bar.
\end{eqnarray}
In addition, we have the following estimate
\begin{eqnarray}\label{JWang-Jan22.000}
\|\nabla_h v_0\| \le \lambda h^{-1} \|v\| + C h \3bar v\3bar,
\end{eqnarray}
where $\lambda$ is a positive constant.
\end{lemma}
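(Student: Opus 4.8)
The plan is to apply Lemma~\ref{Lemma:Lemma6.5new} to the piecewise polynomial $v_0$ (which is piecewise $P_k$ with $k\ge2$), to show that each of the three remainder terms it produces is controlled by the stabilizer $s(\cdot,\cdot)$---hence by $\3bar v\3bar^2$---up to an appropriate power of $h$, and then, for (\ref{DiscretePoincareinequality-New}), to eliminate the leftover $\|v_0\|^2$ by means of the discrete Poincar\'e inequality of Lemma~\ref{DiscretePoincareinequality}. Throughout, $v_{0,L},v_{0,R}$ denote the two one-sided traces of $v_0$ on an interior edge $e$, and for a boundary edge the exterior trace is zero.

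Applying Lemma~\ref{Lemma:Lemma6.5new} with its generic polynomial taken to be $v_0$ bounds $\|\nabla_h v_0\|^2$ by $\varepsilon\|v_0\|^2$, plus $C\varepsilon^{-1}\|\Delta_h v_0\|^2$, plus $C\varepsilon^{-1}h^{-1}$ times $\sum_{e\in\mathcal{E}_h}\int_e(\nabla v_{0,L}\cdot\bn_L+\nabla v_{0,R}\cdot\bn_R)^2\,ds$, plus $Ch^{-1}$ times $\sum_{e\in\mathcal{E}_h}\int_e(Q_bv_{0,R}-Q_bv_{0,L})^2\,ds$, valid for every $\varepsilon>0$. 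First, Lemma~\ref{Lemma:Lemma6.4} gives $\|\Delta_h v_0\|^2=\sum_{T\in\T_h}\|\Delta v_0\|_T^2\le C\3bar v\3bar^2$. Next, since $v_n$ is single valued on each $e$ and vanishes on $\partial\Omega$ (as $v\in V_h^0$), one orients $\bn_L=\bn_e=-\bn_R$ so that on an interior edge $\nabla v_{0,L}\cdot\bn_L+\nabla v_{0,R}\cdot\bn_R=(\nabla v_{0,L}\cdot\bn_e-v_n)-(\nabla v_{0,R}\cdot\bn_e-v_n)$, while on a boundary edge the single surviving trace is $\nabla v_0\cdot\bn_e-v_n$; squaring, integrating over $e$, summing over edges, and using $h^{-1}\le h_T^{-1}$ then yields $h^{-1}\sum_{e}\int_e(\nabla v_{0,L}\cdot\bn_L+\nabla v_{0,R}\cdot\bn_R)^2\,ds\le C\sum_{T\in\T_h}h_T^{-1}\|\nabla v_0\cdot\bn_e-v_n\|_{\partial T}^2\le C\,s(v,v)\le C\3bar v\3bar^2$. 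The $Q_b$-jump term is treated identically, writing $Q_bv_{0,R}-Q_bv_{0,L}=(Q_bv_{0,R}-v_b)-(Q_bv_{0,L}-v_b)$ with $v_b=0$ on $\partial\Omega$; but because $s(\cdot,\cdot)$ weights $\|Q_bv_0-v_b\|_{\partial T}^2$ by $h_T^{-3}$ instead of $h_T^{-1}$, one uses $h^{-1}\le h^2h_T^{-3}$ and gains two powers of $h$: $h^{-1}\sum_{e}\int_e(Q_bv_{0,R}-Q_bv_{0,L})^2\,ds\le Ch^2\sum_{T\in\T_h}h_T^{-3}\|Q_bv_0-v_b\|_{\partial T}^2\le Ch^2\3bar v\3bar^2$. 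Assembling the three estimates produces the master inequality
\[
\|\nabla_h v_0\|^2\le \varepsilon\|v_0\|^2+C\varepsilon^{-1}\3bar v\3bar^2+Ch^2\3bar v\3bar^2,\qquad\forall\,\varepsilon>0.
\]

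The two claims now follow by two choices of $\varepsilon$. For (\ref{DiscretePoincareinequality-New}), Lemma~\ref{DiscretePoincareinequality} together with the same $h^{-1}\le h^2h_T^{-3}$ conversion gives $\|v_0\|^2\le C\|\nabla_h v_0\|^2+Ch^2\3bar v\3bar^2$; substituting this into the master inequality and then fixing $\varepsilon$ equal to a small \emph{fixed} number (independent of $h$) absorbs the resulting $\varepsilon C\|\nabla_h v_0\|^2$ on the left-hand side, leaving $\|\nabla_h v_0\|^2\le C\3bar v\3bar^2$. For (\ref{JWang-Jan22.000}), instead keep $\|v_0\|^2$ on the right of the master inequality and take $\varepsilon=\lambda^2h^{-2}$; this gives $\|\nabla_h v_0\|^2\le\lambda^2h^{-2}\|v_0\|^2+C_\lambda h^2\3bar v\3bar^2$, and taking square roots (with $\|v_0\|=\|v\|$) yields $\|\nabla_h v_0\|\le\lambda h^{-1}\|v\|+Ch\3bar v\3bar$, the constant $C$ in front of the higher-order term being allowed to depend on $\lambda$.

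The only delicate step is the bookkeeping in the two jump terms: one must orient $\bn_L,\bn_R$ consistently with the designated edge normal $\bn_e$ so that each jump collapses \emph{exactly} to a difference of the quantities $\nabla v_0\cdot\bn_e-v_n$ and $Q_bv_0-v_b$ that $s(\cdot,\cdot)$ controls, and one must track the powers of $h$ with care---it is precisely the mismatch between the $h^{-1}$ weight handed down by Lemma~\ref{Lemma:Lemma6.5new} and the $h^{-3}$ weight built into $s(\cdot,\cdot)$ that pushes the $Q_b$-jump contribution down to order $h^2\3bar v\3bar^2$ and so makes the sharper bound (\ref{JWang-Jan22.000}) possible. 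All the remaining ingredients (the trace and inverse inequalities) are already packaged into the lemmas cited above.
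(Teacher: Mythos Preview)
Your proof is correct and follows essentially the same route as the paper: apply Lemma~\ref{Lemma:Lemma6.5new} to $v_0$, bound the two jump terms by inserting the single-valued edge data $v_n$ and $v_b$ to recover the stabilizer quantities, invoke Lemma~\ref{Lemma:Lemma6.4} for $\|\Delta_h v_0\|$, then use Lemma~\ref{DiscretePoincareinequality} to close (\ref{DiscretePoincareinequality-New}) and an $\varepsilon\sim h^{-2}$ choice for (\ref{JWang-Jan22.000}). Your handling of the $\lambda$ bookkeeping (taking $\varepsilon=\lambda^2h^{-2}$ so that the square root yields exactly $\lambda h^{-1}\|v\|$) is in fact tidier than the paper's, which sets $\varepsilon=\lambda h^{-2}$ and tacitly relabels.
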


\begin{proof}
The first component $v_0$ is a piecewise polynomial of degree $k\ge
2$. Using the estimate (\ref{eqn:eqnLemma6.4new}) in Lemma
\ref{Lemma:Lemma6.5new} we have
\begin{eqnarray}\label{eqn:eqnLemma:Jan20-001}
\nonumber\|\nabla_h v_0\| ^2&\le& \varepsilon \|v\|^2 +
C \varepsilon^{-1} \|\Delta_h v_0\|^2\\
&&+ C \varepsilon^{-1} h^{-1} \left(\sum_{e\in
\mathcal{E}_h}\int_{e}\left(\frac{\partial {v_0}_L}{\partial
\textbf{n}_L} + \frac{\partial {v_0}_R}{\partial
\textbf{n}_R}\right)^2 ds\right)
\\
&&\nonumber+ Ch^{-1}\left(\sum_{e\in \mathcal{E}_h}\int_{e} (Q_b
{v_0}_R-Q_b {v_0}_L)^2 ds\right).
\end{eqnarray}
By inserting $v_n\bn_e\cdot\bn$ in each integrand we obtain
$$
\sum_{e\in \mathcal{E}_h}\int_{e}\left(\frac{\partial
{v_0}_L}{\partial \textbf{n}_L} + \frac{\partial {v_0}_R}{\partial
\textbf{n}_R}\right)^2 ds \leq C \sum_{T\in\T_h} \|\nabla v_0
\cdot\bn_e - v_n\|_{\partial T}^2.
$$
Similarly, by inserting $v_b$
$$
\sum_{e\in \mathcal{E}_h}\int_{e} (Q_b {v_0}_R-Q_b {v_0}_L)^2 ds
\leq C \sum_{T\in\T_h} \|Q_b v_0 - v_b\|^2_{\partial T}.
$$
Substituting the above two inequalities into
(\ref{eqn:eqnLemma:Jan20-001}) yields
\begin{eqnarray}\label{JWang-Jan22.001}
\|\nabla_h v_0\| ^2&\le &\varepsilon \|v\|^2 + C \varepsilon^{-1}
\|\Delta_h v_0\|^2+ Ch^{-1}\sum_{T\in\T_h} \|Q_b v_0 -
v_b\|^2_{\partial T}\\
\nonumber&& + C \varepsilon^{-1} h^{-1}\sum_{T\in\T_h} \|\nabla v_0
\cdot\bn_e - v_n\|_{\partial T}^2.
\end{eqnarray}
Using the Poincar\'e inequality (\ref{DiscretePoincareinequality00})
and the estimate (\ref{JWang-001}) we arrive at
\begin{eqnarray}\label{eqn:eqnLemma:Jan20-002}
\nonumber\|\nabla_h v_0\| ^2\le \varepsilon C \|\nabla_h v\|^2 + C
\varepsilon^{-1} \3bar v \3bar^2,
\end{eqnarray}
which leads to the inequality (\ref{DiscretePoincareinequality-New})
for sufficiently small $\varepsilon$.

Finally, by setting $\varepsilon = \lambda h^{-2}$ in
(\ref{JWang-Jan22.001}) we arrive at
\begin{eqnarray}\nonumber \|\nabla_h v_0\| ^2\le \lambda h^{-2}
 \|v\|^2 + C h^2 \3bar v\3bar^2,
\end{eqnarray}
where $\lambda$ is a positive constant. This verifies the inequality
(\ref{JWang-Jan22.000}), and hence completes the proof of the lemma.
\end{proof}

\begin{lemma}\label{Lemma:Lemma6.5}
There exists a constant $C$ such that for any
$v=\{v_0,v_b,v_n\bn_e\}\in V_h^0$ one has
\begin{eqnarray}\label{eqn:eqnLemma6.4}
\sum_{T\in \mathcal{T}_h}\int_{\partial T}(v_0-Q_b v_0)^2 ds \leq C
h \3bar v \3bar^2
\end{eqnarray}
and
\begin{eqnarray}\label{eqn:eqnLemma6.4JW}
\sum_{T\in \mathcal{T}_h}\int_{\partial T}(v_0-Q_b v_0)^2 ds \leq C
\lambda h^{-1}\|v\|^2 + C h^3 \3bar v \3bar^2.
\end{eqnarray}
\end{lemma}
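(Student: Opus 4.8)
The plan is to reduce both estimates to a single purely local bound for the first component $v_0$ and then to feed that bound into the Poincar\'e-type inequalities of Lemma~\ref{DiscretePoincareinequality-2}. The key observation is that the integrand $v_0-Q_bv_0$ on $\partial T$ depends only on $v_0$ (not on $v_b$ or $v_n$), so it suffices to prove the local polynomial estimate $\|v_0-Q_bv_0\|^2_{\partial T}\le Ch_T\|\nabla v_0\|^2_T$ for each $T\in\T_h$. Summing over all elements then gives $\sum_{T\in\T_h}\int_{\partial T}(v_0-Q_bv_0)^2\,ds\le Ch\,\|\nabla_h v_0\|^2$, after which (\ref{eqn:eqnLemma6.4}) is immediate from the bound $\|\nabla_h v_0\|\le C\3bar v\3bar$ in (\ref{DiscretePoincareinequality-New}), and (\ref{eqn:eqnLemma6.4JW}) follows by squaring the alternative estimate (\ref{JWang-Jan22.000}).

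To establish the local estimate, fix $T$ and an edge/face $e\subset\partial T$, and let $\bar v_0=\frac{1}{|T|}\int_T v_0\,dT$ denote the element mean of $v_0$. Since $Q_bv_0$ is the $L^2(e)$ projection of $v_0$ onto $P_{k-1}(e)$ and the constant function $\bar v_0$ lies in $P_{k-1}(e)$, the best-approximation property gives $\|v_0-Q_bv_0\|_{L^2(e)}\le\|v_0-\bar v_0\|_{L^2(e)}$. Now $v_0-\bar v_0\in H^1(T)$, so the trace inequality (\ref{Trace inequality00}) of Lemma~\ref{Trace inequality} together with the Poincar\'e--Wirtinger inequality $\|v_0-\bar v_0\|_T\le Ch_T\|\nabla v_0\|_T$ (which holds with a mesh-independent constant under the shape-regularity assumptions on $\T_h$) yields $\|v_0-\bar v_0\|^2_{L^2(e)}\le Ch_T^{-1}\bigl(\|v_0-\bar v_0\|^2_T+h_T^2\|\nabla v_0\|^2_T\bigr)\le Ch_T\|\nabla v_0\|^2_T$. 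Summing over the (uniformly bounded number of) edges of $\partial T$ proves $\|v_0-Q_bv_0\|^2_{\partial T}\le Ch_T\|\nabla v_0\|^2_T$.

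Assembling the pieces: summing over $T$ and applying (\ref{DiscretePoincareinequality-New}) gives $\sum_{T\in\T_h}\int_{\partial T}(v_0-Q_bv_0)^2\,ds\le Ch\,\|\nabla_h v_0\|^2\le Ch\,\3bar v\3bar^2$, which is (\ref{eqn:eqnLemma6.4}). For (\ref{eqn:eqnLemma6.4JW}), squaring (\ref{JWang-Jan22.000}) and absorbing $\lambda^2$ into a relabelled constant $\lambda$ (its precise power is immaterial, since $\lambda$ only ever enters as a parameter chosen small later) yields $\|\nabla_h v_0\|^2\le C\lambda h^{-2}\|v\|^2+Ch^2\3bar v\3bar^2$; multiplying by $h$ gives $\sum_{T\in\T_h}\int_{\partial T}(v_0-Q_bv_0)^2\,ds\le C\lambda h^{-1}\|v\|^2+Ch^3\3bar v\3bar^2$, as claimed.

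The only genuinely delicate point is the uniformity of the constant in the local bound $\|v_0-Q_bv_0\|^2_{\partial T}\le Ch_T\|\nabla v_0\|^2_T$ over a general polygonal or polyhedral partition. This rests on the shape-regularity hypotheses of~\cite{WY2}, which simultaneously provide the trace inequality with a mesh-independent constant and an element-wise Poincar\'e inequality whose constant is controlled purely by the shape-regularity parameters. Once these are granted, the remainder is elementary bookkeeping, and I expect no further obstacles.
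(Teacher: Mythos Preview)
Your proof is correct and follows essentially the same route as the paper: both first establish the local bound $\int_{\partial T}(v_0-Q_bv_0)^2\,ds\le Ch_T\|\nabla v_0\|_T^2$, sum to obtain $\sum_T\int_{\partial T}(v_0-Q_bv_0)^2\,ds\le Ch\|\nabla_h v_0\|^2$, and then invoke (\ref{DiscretePoincareinequality-New}) and (\ref{JWang-Jan22.000}) from Lemma~\ref{DiscretePoincareinequality-2}. The only cosmetic difference is that the paper derives the local bound by citing the trace and inverse inequalities in one line, whereas you spell it out via best approximation by the element mean and a Poincar\'e--Wirtinger inequality; both arguments are standard and equivalent under the shape-regularity assumptions.
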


\begin{proof}
From the trace inequality (\ref{Trace inequality00}) and the inverse
inequality (\ref{Inverse Inequality00}), we have
\begin{eqnarray*}
\int_{\partial T}(v_0-Q_b v_0)^2 ds \le C h \int_T |\nabla v_0|^2
dT.
\end{eqnarray*}
Summing over all $T\in \mathcal{T}_h$ yields
\begin{eqnarray}\label{Eqn:EstimateEqn1}
\sum_{T\in \mathcal{T}_h}\int_{\partial T}(v_0-Q_b v_0)^2 ds \leq C
h\sum_{T\in \mathcal{T}_h} \int_T |\nabla v_0|^2 dT,
\end{eqnarray}
which, combined with (\ref{DiscretePoincareinequality-New}) and
(\ref{JWang-Jan22.000}), completes the proof of the lemma.
\end{proof}

\begin{remark}
The estimate (\ref{eqn:eqnLemma6.4}) in Lemma \ref{Lemma:Lemma6.5}
is sufficient for us to derive an optimal order error estimate for
the WG finite element solution arising from
(\ref{WGalerkin_Algorithm}). But the estimate
(\ref{eqn:eqnLemma6.4}) is sub-optimal in terms of the mesh
parameter $h$. We conjecture that the following inequality holds
true
\begin{eqnarray}\label{eqn:eqnLemma6.4optimal}
\sum_{T\in \mathcal{T}_h}\int_{\partial T}(v_0-Q_b v_0)^2 ds \leq C
h^3 \ \3bar v \3bar^2.
\end{eqnarray}
However, with the current mathematical approach, we are unable to
verify the validity of (\ref{eqn:eqnLemma6.4optimal}). This estimate
is then left to interested readers or researchers as an open
problem.
\end{remark}

\indent \textbf{Acknowledgements} We gratefully acknowledge
Professor Junping Wang for presenting this problem and giving us
many valuable suggestions. The authors also thank the anonymous
referees and editor for their careful reading of the manuscript and
their valuable comments to improvement the work.


\begin{thebibliography}{99}

\bibitem{AYB98}  M. Arad, A. Yakhot, and G. Ben-Dor,
\emph{A highly accurate numerical solution of a biharmonic
equation}, Numer. Methods Partial Differential Equations, 13 (1998),
pp. 375每391.

\bibitem{AFS68} J. H. Argyris, I. Fried, D. W. Scharpf,  \emph{The TUBA family of plate
elements for the matrix displacement method}, Aeronaut. J. R.
Aeronaut. Soc. 72, (1968), pp. 514每517.

\bibitem{AD76} J. H. Argyris and P. C. Dunne, \emph{The finite element method applied to
fluid dynamics, Compu- tational Methods and Problems in Aeronautical
Fluid Dynamics}, B. L. Hewitt, C. R. Illingworth, R. C. Lock, K. W.
Mangler, J. H. McDonnel, Catherine Richards, F. Walkden, eds.,
Academic Press, London, 1976, pp. 158-197.

\bibitem{AB85} D. N. Arnold and F. Brezzi,
\emph{Mixed and nonconforming finite element methods: implementa-
tion, postprocessing and error estimates}, RAIRO Modl. Math. Anal.
Numr., 19(1), (1985), pp. 7-32.

\bibitem{BG11} E. M. Behrens and J. Guzm$\acute{a}$n, \emph{A mixed method for the biharmonic problem based on a system
of first-order equations}, SIAM J. Numer. Anal., 49 (2011), pp.
789每817.

\bibitem{BK00} B. Bialecki; A. Karageorghis, \emph{A
Legendre spectral Galerkin method for the biharmonic Dirichlet
problem}, SIAM J. Sci. Comput. 22(5), (2000), pp. 1549每1569.

\bibitem{BK10} B. Bialecki; A. Karageorghis, \emph{Spectral Chebyshev collocation for the Poisson and biharmonic
equations}, SIAM J. Sci. Comput. 32(5), (2010), pp. 2995每3019.



\bibitem{Bialecki03} B. Bialecki, \emph{A fast solver for the orthogonal spline collocation
solution of the biharmonic Dirichlet problem on rectangles}, J.
Comput. Phys., 191 (2003), pp. 601每621.

\bibitem{Bjorstad83} P. Bjorstad, \emph{Fast numerical solution of the biharmonic dirichlet
problem on rectangles}, SIAM J. Numer. Anal., 20 (1983), pp. 59-71.

\bibitem{BS05} S. Brenner and L. Sung,
\emph{$C^0$ interior penalty methods for fourth order elliptic
boundary value problems on polygonal domains}, J. Sci. Comput.,
(2005), pp. 83-118.

\bibitem{BP71} J. R. Bunch and B. N. Parlett, \emph{Direct methods for solving symmetric
indefinite systems of linear equations}, SIAM J. Numer. Anal., 8
(1971), pp. 639-655.

\bibitem{CDH97} R. H. Chan, T. K. DeLillo, and M. A. Horn, \emph{The numerical solution of
the biharmonic equation by conformal mapping}, SIAM J. Sci. Comput.,
18 (1997), pp. 1571每1582.

\bibitem{CLL08} G. Chen, Z. Li, and P. Lin, \emph{A fast finite difference method for
biharmonic equations on irregular domains and its application to an
incompressible Stokes flow}, Adv. Comput. Math., 29 (2008), pp.
113每133.

\bibitem{CCQ131}
H. R. Chen, S. C. Chen, Z. H. Qiao, \emph{$C^0$-nonconforming
tetrahedral and cuboid elements for the three-dimensional fourth
order elliptic problem}, Numer. Math. 124(1) (2013), pp. 99每119.

\bibitem{CCQ132}
H. R. Chen, S. C. Chen, Z. H. Qiao, \emph{$C^0$-nonconforming
triangular prism elements for the three-dimensional fourth order
elliptic problem}, J. Sci. Comput. 55(3) (2013), pp. 645每658.



\bibitem{CR74} P. A. Ciarlet and P. G. Raviart, \emph{A mixed finite element method
for the biharmonic equation, in Mathematical Aspects of Finite
Elements in Partial Differential Equations}, Academic Press, New
York, 1974, pp. 125每145.

\bibitem{CT65} R. W. Clough, J. L. Tocher, \emph{Finite element stiffness matrices for
analysis of plates in bending}, In: Proceedings of the Conference on
Matrix Methods in Structural Mechanics. Wright Patterson A.F.B, Ohio
(1965).

\bibitem{CDG09} B. Cockburn, B. Dong, and J. Guzm$\acute{a}$n, \emph{A hybridizable and
superconvergent discontinuous Galerkin method for biharmonic
problems}, J. Sci. Comput., 40 (2009), pp. 141每187.

\bibitem{DP01} C. Davini and I. Pitacco, \emph{An unconstrained mixed method for the
biharmonic problem}, SIAM J. Numer. Anal., 38 (2001), pp. 820每836.


\bibitem{DGP91} E. J. Dean, R. Glowinski, and O. Pironneau, \emph{Iterative solution
of the stream functionvorticity formulation of the Stokes problem,
applications to the numerical simulation of incompressible viscous
flow}, Comput. Methods Appl. Mech. Engrg., 87 (1991), pp. 117每155.

\bibitem{EG75} L. W. Ehrlich and M. M. Gupta, \emph{Some difference schemes for the
biharmonic equation}, SIAM J. Numer. Anal., 12 (1975), pp.
773每790.


\bibitem{EGHLMT02} G. Engel, K. Garikipati, T. Hughes, M.G.
Larson, L. Mazzei, and R. Taylor, \emph{ Con- tinuous/discontinuous
finite element approximations of fourth order elliptic problems in
structural and continuum mechanics with applications to thin beams
and plates, and strain gradient elasticity}, Comput. Meth. Appl.
Mech. Eng., 191 (2002), pp. 3669-3750.

\bibitem{Fraeijs65} B. Fraeijs de Veubeke, \emph{A conforming finite element for plate
bending}, In: Zienkiewicz, O.C., Holister, G.S. (eds.) Stress
Analysis,  Wiley, New York (1965), pp. 145每197.





\bibitem{GGM92} A. Greenbaum, L. Greengard, and A. Mayo, \emph{ On the numerical solution
of the biharmonic equation in the plane}, Phys. D, 60 (1992) pp.
216每225.



\bibitem{GM79} M. M. Gupta and R. P. Manohar, \emph{Direct solution of biharmonic
equation using noncoupled approach}, J. Comput. Phys., 33 (1979),
pp. 236每248.

\bibitem{Heinrichs91} W. Heinrichs, \emph{A stabilized treatment of the biharmonic
operator with spectral methods}, SIAM J. Sci. Statist. Comput. 12
(1991), 1162每1172.

\bibitem{JS77} M. A. Jaswon and G. T. Symm,
\emph{Integral Equation Methods in
Potential Theory and Elastostatics}, Academic Press, New York, 1977,
pp. 99-126.

\bibitem{Linden85} J. Linden, \emph{A Multigrid Method for Solving the Biharmonic Equation on
Rectangular Domains}, Notes Numer. Fluid Mech. 11, Vieweg,
Braunschweig, 1985.

\bibitem{Morley68} L.S.D. Morley,
\emph{The triangular equilibrium element in the solution of plate
bending problems}, Aero. Quart., 19 (1968), pp. 149-169.

\bibitem{Mayo84} A. Mayo, \emph{The fast solution of Poisson＊s and the biharmonic
equations on irregular regions}, SIAM J. Numer. Anal., 21 (1984),
pp. 285每299.


\bibitem{MB07} I. Mozolevski and E. Sli, Bsing,
\emph{P.R.: hp-Version a priori error analysis of interior penalty
discontinuous Galerkin finite element approximations to the
biharmonic equation}, J. Sci. Comput. 30 (2007), pp. 465-491.

\bibitem{MWY} L. Mu, J. Wang, and X. Ye,
\emph{Weak Galerkin finite element methods on polytopal meshes},
arXiv:1204.3655v2.

\bibitem{MWY3818} L. Mu, J. Wang, Y. Wang and X. Ye, \emph{Weak Galerkin mixed finite
element method for the biharmonic equation}, arXiv:1210.3818.

\bibitem{MWY3}L. Mu, J.Wang and X. Ye, \emph{Weak Galerkin finite element methods for the
biharmonic equation on polytopal meshes}, preprint.

\bibitem{MWYZ} L. Mu, J. Wang, X. Ye and S. Zhang, A $C^0$-weak Galerkin finite element method for the biharmonic
euqation, preprint.


\bibitem{Muskhelishvili53} N. I. MUSKHELISHVILI,
\emph{Some Basic Problems in the Mathematical Theory of Elasticity},
Noordhoff Groningen, the Netherlands, 1953.

\bibitem{Pandit08} S. K. Pandit,
\emph{On the use of compact streamfunction-velocity formulation of
steady Navier- Stokes equations on geometries beyond rectangular},
J. Sci. Comput., 36 (2008), pp. 219每 242.

\bibitem{PR70} B. N. Parlett and J. K. Reid, \emph{On the solution of a system of linear
equations whose matrix is symmetric but not definite}, BIT, 10
(1970), pp. 386-397.

\bibitem{Roache72} P. ROACHE,
\emph{Computational Fluid Dynamics}, Hermosa, Albuquerque, NM, 1972,
p.306.

\bibitem{CW-JW} C. Wang and J. Wang,
\emph{An efficient numerical scheme for the biharmonic equation by
weak Galerkin finite element methods on polygonal or polyhedral
meshes}, arXiv:1303.0927.

\bibitem{WY1} J. Wang and X. Ye,
\emph{A weak Galerkin finite element method for second-order
elliptic problems}, J. Comp. and Appl. Math, 241 (2013), pp.
103-115.

\bibitem{WY2} J. Wang and X. Ye,
\emph{A weak Galerkin mixed finite element method for second-order
elliptic problems}, Math.  Comp. 83
 (2014), no. 289, 2101-2126.
.

\bibitem{WY3} J. Wang and X. Ye,
{A weak Galerkin finite element method for the Stokes equations},
arXiv:1302.2707v1.

\bibitem{Zlamal68} M. Zl$\acute{a}$mal, \emph{
On the finite elmeent method}, Numer. Math. 12(1968), pp. 394-409.


\end{thebibliography}
\end{document}